\theoremstyle{plain}
\pgfplotsset{compat=1.11}
\tikzset{snake it/.style={decorate, decoration=snake}}
\tikzset{
	>=stealth',
	punkt/.style={
		rectangle,
		rounded corners,
		draw=black, very thick,
		text width=6.5em,
		minimum height=2em,
		text centered},
	pil/.style={
		->,
		thick,
		shorten <=2pt,
		shorten >=2pt,}
}
\newcommand{\N}{\ensuremath{\mathbb{N}}}
\newcommand{\Z}{\ensuremath{\mathbb{Z}}}
\newcommand{\st}{\ensuremath{\mathsf{St}}}
\newcommand{\cn}{\ensuremath{\mathsf{cn}}}
\theoremstyle{definition}
\theoremstyle{plain}
\newtheorem{theorem}{Theorem}[section]
\newtheorem{lemma}[theorem]{Lemma}
\newtheorem{thm}[theorem]{Theorem}
\newtheorem{coro}[theorem]{Corollary}
\newtheorem{remark}[theorem]{Remark}
\newtheorem{prob}{Problem}
\newtheorem*{theorem*}{Theorem}
\theoremstyle{remark}
\title{Splitting groups with cubic Cayley graphs of connectivity two}
\author{Babak Miraftab \and Konstantinos Stavropoulos \smallskip \and Department of Mathematics\\ University of Hamburg}
\date{\today}
\newenvironment{txteq*}
{
	\begin{equation*}
		\begin{minipage}[t]{0.85\textwidth} 
			\em                                
		}
		{\end{minipage}\end{equation*}\ignorespacesafterend}
\begin{document}
	\maketitle

	\begin{abstract}
		A group $G$ splits over a subgroup $C$ if $G$ is either a free product with amalgamation $A \underset{C}{\ast} B$ or an {\rm HNN}-extension $G=A \underset{C}{\ast} (t)$. We invoke Bass-Serre theory to classify all infinite groups which admit cubic Cayley graphs of connectivity two in terms of splittings over a subgroup. 
	\end{abstract}
	
	\section{Introduction}
	The study of the structure of groups in terms of the connectivity of their Cayley graphs was started by Jung and Watkins. They characterized infinite transitive graphs of connectivity one whose automorphism groups act on their vertex sets as primitive and regular permutation groups \cite{jung1977structure}. Later, Watkins \cite{watkins1976graphes} characterized all Cayley graphs of connectivity one.
	
	A topic that has been already paired with the connectivity of Cayley graphs in order to study them is the planarity of infinite Cayley graphs. 
	A finitely generated group $G$ is called \emph{planar} if it admits a generating set $S$ such that the Cayley graph $Cay(G,S)$ is planar.
	In that case, $S$ is called a \emph{planar generating set}.
	For the first time, in 1896, Maschke \cite{maschke1896representation}  characterized all finite groups admitting planar Cayley graphs.
	Infinite planar groups attracted more attention, as some of them are related to surface and Fuchsian groups \cite[Section 4.10]{zieschang2006surfaces} which play a substantial role in complex analysis, see survey \cite{zieschang2006surfaces}.
	Hamann \cite{planarhamann} uses a combinatorial method in order to show that planar groups are finitely presented.
	His method is based on tree-decompositions, a crucial tool of graph minor theory which we also utilize extensively
	in this paper.
	
	In \cite{droms}, Droms, B. Servatius, and H. Servatius characterized planar groups with low connectivity in terms of the fundamental group of the graph of groups.
	Indeed, they showed: 
	\begin{theorem*}{\rm{\cite[Theorem 4.4]{droms}}}
		If a group $G$ has planar connectivity\footnote{The
			planar connectivity $\kappa(G)$ of a planar group $G$ is the minimum connectivity of all its
			planar Cayley graphs.} $2$, then either $G$ is a finite cyclic or dihedral
		group, or it is the fundamental group of a graph of groups whose edge groups all have order
		two or less and whose vertex groups all have planar connectivity at least three. 
		In the latter
		case, the vertex groups have planar generating sets which include the nontrivial elements of
		the incident edge groups.
	\end{theorem*}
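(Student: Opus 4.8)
The plan is to recover the splitting of $G$ from a single planar Cayley graph $\Gamma = Cay(G,S)$ witnessing $\kappa(G)=2$, by reading off a graph-of-groups decomposition from a canonical tree-decomposition of $\Gamma$. Since $\kappa(\Gamma)=2$, the graph is $2$-connected yet admits $2$-element separators, so I would invoke the Tutte-type canonical tree-decomposition that separates $\Gamma$ along its $2$-separators into $3$-connected torsos and cyclic torsos. The decisive feature is that such a decomposition is canonical, hence invariant under $\mathrm{Aut}(\Gamma)$ and in particular under the left-translation action of $G$; this is precisely the kind of tree-decomposition technique imported from graph minor theory that Hamann used for planar groups. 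Because $\Gamma$ is locally finite and vertex-transitive and all separators have bounded size, an accessibility-type argument gives finitely many $G$-orbits of parts and separators, so $G$ acts on the decomposition tree $T$ with finite quotient.

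Next I would pass from this action to an algebraic splitting through Bass--Serre theory. After a single subdivision of $T$ to remove inversions, the action of $G$ on $T$ without inversions presents $G$ as the fundamental group $\pi_1(\mathcal{G})$ of a graph of groups $\mathcal{G}$ over $T/G$, with vertex groups the setwise stabilisers $G_t$ of the torsos and edge groups the setwise stabilisers of the associated $2$-separators. One arranges the decomposition so that the parts contributing genuine vertex groups are exactly the $3$-connected torsos, while the cyclic torsos are absorbed into the tree and edge data; this is what makes the vertex-group connectivity bound plausible. It then remains to bound the edge groups and to identify the vertex groups.

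The edge-group bound is the easy half. A $2$-separator is an unordered pair $\{u,v\}$ of vertices, and its setwise stabiliser either fixes both of $u,v$ or interchanges them. Since left translation acts \emph{freely} on $V(\Gamma)$, no nontrivial element of $G$ fixes a vertex, so the pointwise stabiliser of $\{u,v\}$ is trivial and the setwise stabiliser has order at most $2$, generated when nontrivial by a single involution swapping $u$ and $v$. Hence every edge group has order two or less. The base case is equally transparent: if $T$ is a single node then $\Gamma$ is its unique torso, which is either $3$-connected — impossible, as it would force $\kappa(\Gamma)\ge 3$ — or a cycle; a $2$-connected cycle is necessarily finite, an infinite cycle being a double ray of connectivity one, and a finite cycle that is a Cayley graph forces $G$ to be finite cyclic (one generator) or finite dihedral (two involutions), which is the first alternative.

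The main obstacle is the vertex-group analysis: showing that each $G_t$ attached to a $3$-connected torso has planar connectivity at least three and admits a planar generating set containing the nontrivial elements of the incident edge groups. The difficulty is that the torso need not be literally a Cayley graph of $G_t$, since the boundary vertices lying in the $2$-separators of a part may split into several $G_t$-orbits even though $G_t$ still acts freely. I would therefore reconstruct a Cayley graph of $G_t$ from the torso by replacing each virtual edge coming from a $2$-separator with the corresponding edge-group involution, and by re-attaching or identifying the boundary orbits so that the resulting graph becomes vertex-transitive under $G_t$ while retaining both the planar embedding inherited from $\Gamma$ and the $3$-connectedness of the torso. Verifying that this reconstruction yields a \emph{planar} Cayley graph of $G_t$ of connectivity at least three, in which the edge-group involutions genuinely occur among the generators, is where the real work lies; freeness of the $G$-action and the $G_t$-invariance furnished by canonicity are the two levers I would rely on throughout.
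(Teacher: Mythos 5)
This statement is quoted background from Droms et al.\ \cite[Theorem 4.4]{droms}; the present paper contains no proof of it, so your attempt can only be measured against the strategy the paper deploys for its analogous cubic results. Your outline starts on the right track (canonical tree-decomposition along $2$-separators, then Bass--Serre after subdividing away inversions), and the two halves you actually prove are sound: left translation acts freely, so the setwise stabiliser of a $2$-separator is trivial or generated by a single swapping involution --- exactly the mechanism of Lemma~\ref{Involution lemma} and Corollary~\ref{Edge stabiliser} here --- and the one-node base case correctly yields finite cyclic or dihedral groups. But the proposal has a genuine gap precisely where you yourself locate ``the real work,'' and in two distinct ways. First, the reconstruction of a Cayley graph of a vertex group $G_t$ from a torso is asserted, not carried out: one must show that $\st_G(V_t)$ can be identified with the part itself (so that the part is a coset of a subgroup and the virtual edges become generator-labelled edges), and that the relevant involutions form a finite generating set. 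In the special cubic case this identification is the bulk of the present paper --- Lemmas~\ref{Connectivity Lemma} and~\ref{Disjointness Lemma} plus the explicit coset-by-coset stabiliser computations of Sections 4 and 5 --- so ``re-attaching or identifying the boundary orbits'' is a hope standing in for several pages of argument, and there is no reason to expect it to be easier in the general planar setting.

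Second, and more fundamentally, even a fully successful reconstruction would only exhibit \emph{one} $3$-connected planar Cayley graph of $G_t$; the theorem claims $\kappa(G_t)\geq 3$, i.e.\ that $G_t$ admits \emph{no} planar Cayley graph of connectivity $\leq 2$. A single application of the Tutte decomposition cannot deliver a universally quantified statement over all planar generating sets of $G_t$. What is needed --- and entirely absent from your outline --- is an iteration: if some vertex group still has planar connectivity at most two, split it again and refine the graph of groups, with termination guaranteed by an accessibility-type argument or an induction over a suitably minimal choice of $\Gamma$. Relatedly, your plan to have the cyclic torsos ``absorbed into the tree and edge data'' cannot work as stated: stabilisers of cycle parts genuinely occur as vertex groups of the resulting splittings (in this very paper, $\mathbb Z_n$, $\mathbb Z_{2n}$, $D_{2n}$ and $D_\infty$ all arise this way in Theorem~\ref{group characterization}), so such factors must be handled by the iterative refinement or by the ``finite cyclic or dihedral'' alternative, not defined away in the decomposition. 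Until these points are supplied, the clause about vertex groups --- the heart of the theorem --- remains unproved.
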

	
	Later, Georgakopoulos in \cite{Ageloscubic} determines all presentations of groups which admit planar cubic Cayley graphs with connectivity two. Furthermore, he provides partial information about the presentations of non-planar ones. Our result provides full information not only for the planar, but also the non-planar groups with cubic Cayley graphs of connectivity two.
	
	More specifically, Georgakopoulos' method does not assert anything regarding (and is, in a sense, independent of) splitting
	the group over subgroups to obtain its structure.
	By combining tree-decompositions and Bass-Serre theory, we give a simple proof for the full characterization of all groups with cubic Cayley graphs of connectivity two via the following theorem:
	\begin{thm}\label{group characterization}
		Let $G=\langle S\rangle$ be a group such that $\Gamma=\mathsf{Cay}(G,S)$ is a cubic graph of connectivity two.
		Then $G$ is isomorphic to one of the following groups:
		\begin{enumerate}[label=\rm(\roman*)]
			\item $\mathbb Z_n \ast \mathbb Z_2=\langle a,b\mid b^2,(ba)^n\rangle$ or $\langle a,b,c\mid a^2,b^2,c^2,(bcba)^n\rangle$,
			\item  $D_{2n} \underset{\mathbb Z_2}{\ast}(t)=\langle a,b\mid b^2,(ba^{-1}ba)^n\rangle$,
			\item  $D_{2n} \underset{\mathbb Z_2}{\ast}D_{2m}=\langle a,b,c\mid a^2,b^2,c^2,(ba)^n,(bc)^m\rangle$ or $\langle a,b,c \mid a^2, b^2, c^2, (bc)^{2n}, (a(bc)^n)^m \rangle$ or $\langle a,b,c \mid a^2, b^2, c^2, (bc)^n,(a(bc)^kb)^m \rangle$,
			\item $\mathbb Z_{2n} \underset{\mathbb Z_2}{\ast} D_{2m}=\langle a,b \mid b^2, a^{2n}, (ba^n)^m \rangle$,
			\item $D_{\infty} \underset{\mathbb Z_2}{\ast} D_{2m}=\langle a,b,c \mid a^2, b^2, c^2, (a(bc)^nb)^m \rangle$.  
		\end{enumerate}
	\end{thm}
	
	Theorem~\ref{group characterization} is a direct consequence of Theorems~\ref{Type I two gen},~\ref{Type I three gen},~\ref{Type II two gen} and~\ref{Type II three gen}, where we also discuss in detail the planarity of the corresponding Cayley graphs in each case, as well as their presentations. This allows us to obtain as a corollary the results of \cite{Ageloscubic}, as well as full presentations for the non-planar groups with cubic Cayley graphs of connectivity two. 
	
	Compared to the methods in~\cite{Ageloscubic}, we believe that our graph theoretical arguments are simplified, while we inevitably spend more time to recover the full algebraic structure of the group in terms of splitting this time around. Moreover, even though the planar part of our result can be relatively quickly recovered from~\cite{Ageloscubic} by applying Tietze transformations accordingly, such an approach usually works only provided that one knows or guesses beforehand the new desired presentation (in our case, the one that expresses the splitting of the group) in order to apply the correct Tietze transformations. By applying Bass-Serre theory, we naturally determine the structure of the group in terms of splitting avoiding the nuisance above, which was also the way we originally obtained it.
	
	\section{Preliminaries}
	Our terminology of groups and graphs is standard. We refer the reader to \cite{serre} for Bass-Serre theory and \cite{RDbook} for graph theory for any notation missing.
	
	\subsection{Graphs}
	
	Throughout this paper, $\Gamma$ always denotes a connected locally finite graph with vertex set $V(\Gamma)$ and edge set $E(\Gamma)$.
	A \emph{ray} is a one-way infinite path and a \emph{tail} of a ray is an infinite subpath of the ray.
	Two rays $R_1$ and $R_2$ are equivalent if there is no finite set $S$ of vertices such that $R_1$ and $R_2$ have tails in different components of $G\setminus S$.
	The equivalence classes of rays are called \emph{ends}. We refer the reader to surveys~\cite{diestel2011locally2,diestel2011locally} for a detailed study of the end structure of graphs.
	
	For a subset $U\subseteq V(\Gamma)$, we denote by $\Gamma[U]$ the subgraph induced by the vertices of $U$. A \emph{separation} of $\Gamma$ is an ordered pair $(A,B)$, where $A,B\subseteq V(\Gamma)$, such that $\Gamma[A]\cup \Gamma[B]=\Gamma$ and there is no edge between $A\setminus B$ and $B\setminus A$.
	The \emph{order} of $(A,B)$ is the size of $A\cap B$ and we denote it by $|(A,B)|$. If $|(A,B)|=k$, we say that $(A,B)$ is a $k$-separation.
	The set of separations of $\Gamma$ can be equipped with the following partial order:
	$(A,B)\leq (C,D)$ if $A\subseteq C$ and $B\supseteq D$.
	We say that $(A,B)$ is \emph{nested} with $(C,D)$ if $(A,B)$ is comparable to either $(C,D)$ or $(D,C)$. Otherwise, we say that the two separations \emph{cross}. We say that a vertex set $X$ separates vertex sets $U$ and $W$ if there exists a separation $(A,B)$ such that $U\subseteq A$, $W\subseteq B$ and $X=A\cap B$.
	
	Let $S$ be a set of vertices of $\Gamma$.
	The set of neighbors of $S$ is denoted by $N(S):=\bigcup_{s\in S} N(s) \setminus S$, whereas $N[S]$ denotes $S\cup N(S)$.
	A component $C$ of $G\setminus S$ is called \emph{tight} if $N(C)=S$.
	A separation $(A,B)$ is called \emph{tight} if both $A\setminus B$ and $B\setminus A$ have tight components.
	A separation $(A,B)$ \emph{distinguishes} two ends $\omega_1$ and $\omega_2$ if a ray $R_1\in\omega_1$ has a tail in $A\setminus B$ and a ray $R_2\in \omega_2$ has a tail in $B\setminus A$ or vise versa.
	Moreover, it distinguishes $\omega_1$ and $\omega_2$  \emph{efficiently} if 
	there is no separation $(C,D)$ distinguishing $\omega_1$ and $\omega_2$ such that $|(C,D)|<|(A,B)|$.
	Two ends $\omega_1$ and $\omega_2$ are \emph{k-distinguishable} if there is a separation of order at most $k$ distinguishing $\omega_1$ and $\omega_2$ efficiently.
	
	A separation is \emph{splitting} if it distinguishes at least two ends efficiently.
	We note that if $(A,B)$ is splitting, then $(A,B)$ is a tight separation.
	Let $(A,B)$ be a splitting $k$-separation.
	The \emph{crossing number} $\cn(A,B)$ of $(A,B)$ is the cardinality of the set containing all crossing tight~$\ell$-separations distinguishing at least two ends, where~${\ell\leq }k$ (which can be seen to be finite~\cite{t.d}).

	Let $\Gamma$ be an arbitrary connected graph.
	A \emph{tree-decomposition} of $\Gamma$ is a pair $(T , \mathcal V )$ of a tree $T$ and a family $\mathcal V = (V_t)_{t\in V(T)}$ of vertex sets $V_t\subseteq V(\Gamma)$, which are called \emph{parts}, such that:
	\begin{enumerate}[label=(T{\arabic*})]
		\item  $V (\Gamma) = \bigcup_{t\in T} V_t$,
		\item for every edge $e\in E(\Gamma)$, there exists a $t\in V(T)$ such that both ends of $e$ lie in $V_t$,
		\item $V_{t_1} \cap V_{t_3} \subseteq V_{t_2}$ whenever $t_2$ lies on the $(t_1,t_3)$-path in $T$.
	\end{enumerate}
	
	\noindent
	In order to distinguish them from the vertices of the graph $\Gamma$, we will usually refer to the vertices of the underlying tree $T$ of a tree decomposition of $\Gamma$ as nodes.
	
	An \emph{adhesion set} of $(T , \mathcal V )$ is a set of the form $V_t\cap V_{t'}$, where $tt'\in E(T)$. 
	The \emph{adhesion} of $(T , \mathcal V )$ is the maximum over the sizes of its adhesion sets.
	It is not hard to see that each adhesion set leads to a separation of $\Gamma$.
	More precisely, assume that $T_t$ and $T_t'$ are the components of $T - tt'$ containing $t$ and $t'$ respectively. 
	Then the adhesion set $V_t\cap V_{t'}$ induces the separation $(W_{t\setminus t'},W_{t'\setminus t})$ of $\Gamma$, where $W_{t\setminus t'}=\bigcup_{s\in T_t} V_s$ and $W_{t'\setminus t}=\bigcup_{s\in T_{t'}} V_s$. When every such separation is tight, we call the tree-decomposition \emph{tight} as well. Finally, a tree decomposition is \emph{reduced} if no part is contained in another one.
	
	The following folklore fact about tree decompositions and nested set of separations is well-known, see \cite{confing}.
	
	\begin{remark}\label{nested-td}
		Every nested set $\mathcal N$ of separations gives rise to a reduced tree-decomposition whose adhesion sets are exactly the elements of $\mathcal N$.
		On the other hand, each adhesion set of a tree-decomposition induces a separation and the set of all induced separations of adhesion sets of a tree-decomposition is a nested set of separations.
	\end{remark}
	
	
	
	\noindent
	
	Let $\Gamma$ be a locally finite graph with a tree-decomposition $(T,\mathcal V)$.
	We call the \emph{torso} of a part $V_t$ the supergraph of $\Gamma[V_t]$ obtained by adding to it all possible edges in the adhesion sets incident to $V_t$.
	The following general lemma for tree-decompositions is folklore.
	\begin{lemma}\label{Torso Lemma}
		Let $(T, \mathcal V)$ be a tree-decomposition of a connected graph $\Gamma$ and $t\in V(T)$ such that every adhesion set of $t$ induces a connected subgraph. Then $\Gamma[V_t]$ is connected. In particular, the torso of every part of $(T, \mathcal V)$ is connected.
	\end{lemma}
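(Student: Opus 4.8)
The plan is to prove connectedness of $\Gamma[V_t]$ by a path‑pushing argument: take any two vertices $u,v\in V_t$, choose a $u$--$v$ path $P$ in $\Gamma$ (which exists since $\Gamma$ is connected), and reroute $P$ so that it runs entirely inside $V_t$. Concretely, I would write $P$ as an alternation of segments inside $V_t$ and \emph{detours} $a z_1\cdots z_k b$ whose endpoints $a,b$ lie in $V_t$ but whose interior vertices $z_1,\dots,z_k$ lie outside $V_t$, and then replace each detour by a path through $V_t$.

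Two standard facts about tree-decompositions handle the bookkeeping. First, for each vertex $x$ the set $T_x:=\{s\in V(T):x\in V_s\}$ induces a nonempty subtree of $T$; this is immediate from (T1) and (T3). Second, for each edge $tt'\in E(T)$ the induced separation $(W_{t\setminus t'},W_{t'\setminus t})$ has separator exactly the adhesion set $V_t\cap V_{t'}$, so no edge of $\Gamma$ joins $W_{t\setminus t'}\setminus V_{t'}$ to $W_{t'\setminus t}\setminus V_t$.

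The heart of the argument is to pin each detour behind a single edge of $T$ at $t$. Given a detour $a z_1\cdots z_k b$, by (T2) the edge $a z_1$ lies in a common part $V_s$, and $s\neq t$ since $z_1\notin V_t$; let $tt'$ be the first edge of the $t$--$s$ path in $T$, so that $z_1\in V_s\subseteq W_{t'\setminus t}$. As $T_a$ is connected and contains both $t$ and $s$, it also contains $t'$, whence $a\in V_t\cap V_{t'}$. The detour avoids $V_t$, hence avoids the separator $V_t\cap V_{t'}$, so starting from $z_1$ it cannot cross to the other side and stays strictly on the $t'$-side throughout; in particular $z_k\in W_{t'\setminus t}\setminus V_t$, and the edge $z_k b$ together with $b\in V_t$ then forces $b\in V_t\cap V_{t'}$ as well. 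Now the hypothesis that $V_t\cap V_{t'}$ induces a connected subgraph lets me join $a$ to $b$ inside $\Gamma[V_t\cap V_{t'}]\subseteq\Gamma[V_t]$, replacing the detour. Carrying this out for every detour turns $P$ into a $u$--$v$ walk inside $V_t$, so $\Gamma[V_t]$ is connected. I expect the main obstacle to be precisely this step: verifying that an entire detour sits behind one edge $tt'$ (so its two endpoints land in the \emph{same} adhesion set) rather than wandering among several parts, which is exactly where the separation property and the connectivity of the subtrees $T_x$ are essential.

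For the final sentence, I would deduce connectedness of the torso by applying the statement just proved to the graph $\Gamma^{+}$ obtained from $\Gamma$ by completing every adhesion set incident to $t$ into a clique. This $\Gamma^{+}$ still admits $(T,\mathcal V)$ as a tree-decomposition, since each added edge has both endpoints in a single adhesion set and hence in two parts, so (T2) is preserved; and in $\Gamma^{+}$ every adhesion set incident to $t$ is complete, hence connected. The first part then gives that $\Gamma^{+}[V_t]$ is connected, and by construction $\Gamma^{+}[V_t]$ is exactly the torso of $V_t$.
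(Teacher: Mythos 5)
Your proof is correct. Note that the paper itself gives no proof of this lemma --- it is stated as folklore --- so there is no argument of the paper to compare against; your write-up supplies the standard argument in full. The crux, which you correctly identify and handle, is pinning each maximal detour $a z_1\cdots z_k b$ behind a single tree edge: since $T_a=\{s : a\in V_s\}$ is a subtree containing both $t$ and the part $s$ witnessing the edge $az_1$, it contains the neighbour $t'$ of $t$ on the $t$--$s$ path, so $a\in V_t\cap V_{t'}$; and since every edge of $\Gamma$ lies on one side of the separation $(W_{t\setminus t'},W_{t'\setminus t})$, whose separator $V_t\cap V_{t'}$ the detour avoids (as it avoids $V_t$), a straightforward induction along $z_1,\dots,z_k$ keeps the detour strictly on the $t'$-side and forces $b\in V_t\cap V_{t'}$ as well. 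Replacing each detour by a path inside the connected $\Gamma[V_t\cap V_{t'}]$ then yields a $u$--$v$ walk in $\Gamma[V_t]$, as claimed. The deduction of the torso statement via the auxiliary graph $\Gamma^{+}$ is also sound: adding clique edges inside adhesion sets at $t$ preserves (T1)--(T3) (each new edge lies in the part $V_t$), keeps the graph connected, makes every adhesion set at $t$ induce a connected subgraph, and makes $\Gamma^{+}[V_t]$ coincide with the torso of $V_t$ exactly as the paper defines it.
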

	
	In this paper, we are studying groups admitting cubic Cayley graphs of connectivity two.

	
	\subsection{Groups}
	
	Let $G$ be a group acting on a set $X$.
	Then the \emph{setwise stabilizer} of a subset $Y$ of $X$ is the set of all elements $g\in G$ stabilizing $Y$ setwise,~i.e
	\[
	\mathsf{St}_G(Y):=\{g\in G\mid gY=Y\}.
	\]
	Let $G$ be a group acting on a graph $\Gamma$.
	Then this action induces an action on $E(\Gamma)$.
	We say that $G$ acts without \emph{inversion} on $\Gamma$ if $g(uv)\neq vu$ for all $uv\in E(\Gamma)$ and $g\in G$. In the case that $g(uv)= vu$, we say that $g$ \emph{inverts} $u,v$. Notice that when $G$ acts transitively with inversion on the set $E(T)$ of edges of a tree $T$ without leaves, it must also act transitively on the set $V(T)$ of its vertices.
	
	Let $G_1=\langle S_1\mid R_1\rangle$ and $G_2=\langle S_2\mid R_2\rangle$ be two groups.
	Suppose that a subgroup $H_1$ of $G_1$ is isomorphic to a subgroup $H_2$ of $G_2$, say via an isomorphic map $\phi\colon H_1\to H_2$.
	The  \emph{free-product with amalgamation} of $G_1$ and $G_2$ over $H_1$ is
	$$G_1 \underset{H_1}{\ast} G_2=\langle S_1\cup S_2\mid R_1\cup R_2\cup h\phi(h)^{-1},\forall h\in H_1\rangle. $$
	
	If $H_1$ and $\phi(H_1)$ are isomorphic subgroups of $G_1$, then the \emph{{\rm HNN}-extension} of $G_1$ over $H_1$ with respect to $\phi$ is $$G_1\underset{H_1}{\ast}(t)=\langle S_1, t\mid R_1\cup tht^{-1}\phi(h)^{-1}, \forall h\in H_1\rangle.$$
	
	The crux of Bass-Serre theory is captured in the next lemma which determines the structure of groups acting on trees.
	\begin{lemma}\label{bass-serre}{\rm\cite{serre}} 
		Let~$G$ act without inversion on a tree which has no vertices of degree one and let $G$ act transitively on the set of (undirected) edges. 
		If $G$ acts transitively on the vertices of the tree, then $G$ is an {\rm HNN}-extension of the stabilizer of a vertex over the stabilizer of an edge. 
		If there are two orbits on the vertices of the tree, then $G$ is the free product of the stabilizers of two adjacent vertices with amalgamation over the stabilizer of an edge.
	\end{lemma}
	
	\noindent
	There is a standard way to deal with the case where we cannot apply Lemma~\ref{bass-serre} directly when $G$ acts with inversion on the tree.
	
	\begin{lemma}\label{subdivided bass-serre}
		Let~$G$ act transitively with inversion on the edges of a tree $T$ without leaves.
		Then $G$ is the free product of the stabilizer of a vertex and the stabilizer of an edge with amalgamation over their intersection.
	\end{lemma}
	
	\begin{proof}
		Subdivide every edge $tt'$ of $T$ to obtain a tree $T'$ and let $v_{tt'}$ be the corresponding new node. Notice that $G$ now acts transitively on $E(T')$ without inversion and with two orbits on $V(T')$. Each old node $t$ of $T$ has the same pointwise stabilizer in $T'$. Observe that for each new node $v_{tt'}$ we have $\st_G(v_{tt'})=\st_G(e)$, where $tt'=e \in E(T)$. The result follows from Lemma~\ref{bass-serre}.
	\end{proof}
	
	\noindent
	
	Finally, $\Z_n$ denotes the \emph{cyclic group of order $n$}. A finite \emph{dihedral} group is defined by the presentation $\langle a,b\mid b^2,a^n,(ba)^2\rangle$ and denoted by $D_{2n}$. Moreover, the infinite dihedral group $D_{\infty}$ is defined by $\langle a,b\mid b^2,(ba)^2\rangle$.   	
	
	\section{General structure of the tree-decomposition}
	
	Our key tool is the canonical tree decomposition $(T,\mathcal V)$ of Lemma~\ref{canonical td}, which will allow us to translate the action of $G$ on $\Gamma$ to an action of $G$ on $T$ and apply Bass-Serre theory. The lemma follows easily by the following results of \cite{t.d}, which we slightly reformulate for our needs.
	
	\begin{thm}{\rm\cite[Corollary 1.2.]{t.d}}\label{nested}
		Let $\Gamma$ be a locally finite graph with more than one end.
		For each $\ell\in \N$, there is a canonical tree-decomposition distinguishing all $\ell$-distinguishable ends efficiently. Moreover, its adhesion sets induce splitting separations of minimum crossing number.
	\end{thm}
	
	
	\noindent
	As a last preparatory step before we state and prove Lemma~\ref{canonical td}, we need the following result which ensures that our Cayley graph is multi-ended.
	
	\begin{lemma}{\rm\cite[Lemma 2.4]{babai1997growth}}\label{one-ended}
		Let $\Gamma$ be a connected vertex-transitive $d$-regular graph.
		Assume $\Gamma$ has one end.
		Then the connectivity of $\Gamma$ is $\geq 3(d + 1)/4$. 
	\end{lemma}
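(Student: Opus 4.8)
My plan is to run the classical \emph{fragment} and \emph{atom} method for the connectivity of vertex-transitive graphs, due to Mader and Watkins, and then to extract the extra factor from the one-end hypothesis. Write $\kappa$ for the connectivity of $\Gamma$ and fix a minimum separator, so $|S|=\kappa$. Call a nonempty vertex set $F$ a \emph{fragment} if $N(F)$ is a minimum separator and $V(\Gamma)\setminus N[F]\neq\emptyset$, and call a fragment of least cardinality an \emph{atom}. Since $\Gamma$ is locally finite and has exactly one end, for a minimum separator $S$ the graph $\Gamma-S$ has exactly one infinite component, so at least one component is finite; hence finite fragments exist and the atom $a$ is finite, say $|a|=m$ and $N(a)=S$.

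The first main step is to pin down the block structure of atoms. The vertex-boundary function $X\mapsto|N(X)|$ is submodular, i.e. $|N(X\cap Y)|+|N(X\cup Y)|\le|N(X)|+|N(Y)|$, and combined with the minimality of $\kappa$ and the vertex-transitivity of $\Gamma$ this forces any two atoms to be either equal or disjoint (the boundary cases, where an intersection or union has empty complement, are the delicate points and must be checked separately). Consequently the atoms form a block system for $\mathrm{Aut}(\Gamma)$, each atom induces a connected vertex-transitive, hence $r$-regular, subgraph on $m$ vertices with $r\le m-1$, and every vertex of $S=N(a)$ has a neighbour in $a$. Counting edge-endpoints gives $e(a,S)=m(d-r)\ge m(d-m+1)$ on one side, while $\kappa=|S|\le e(a,S)$ on the other, together with the companion estimate $m\le\kappa$ obtained by translating $a$ onto $S$; these are the relations that underlie Mader's general bound $\kappa\ge\tfrac23(d+1)$ for vertex-transitive graphs.

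The heart of the matter, and the step I expect to be the main obstacle, is upgrading $\tfrac23$ to $\tfrac34$ using that $\Gamma$ has a single end. The extremal configurations for the $\tfrac23$ bound (cycles and closely related two-ended graphs) are precisely the ones that are \emph{not} one-ended, so the idea is to show that if $\kappa<\tfrac34(d+1)$ then the finite atom $a$, together with its infinite complementary fragment $V(\Gamma)\setminus N[a]$, can be translated by $\mathrm{Aut}(\Gamma)$ into a family of fragments whose boundaries chain up and separate $\Gamma$ into two infinite pieces, contradicting one-endedness. Concretely, I would analyse the edges between adjacent blocks and sharpen the bounds on $m$ and $r$ by playing the finiteness of $a$ against the infiniteness of its complement; the asymmetry that a finite atom cannot be balanced by an equally small opposite fragment is what rules out the two-ended extremal examples and gains the missing factor. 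Since this is exactly \cite[Lemma 2.4]{babai1997growth}, one may alternatively invoke it directly; note that for the present application only the cubic case $d=3$ is needed, where the bound reads $\kappa\ge3$, so the statement becomes: every one-ended cubic vertex-transitive graph is $3$-connected.
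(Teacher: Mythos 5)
There is a genuine gap, and you have in fact flagged it yourself. The first half of your argument (finite atoms exist because a one-ended graph minus a minimum separator has exactly one infinite component, submodularity of $X\mapsto|N(X)|$ forcing atoms to be equal or disjoint, the counts $\kappa\le e(a,S)\le m(d-r)$ with $r\ge$ degree inside the atom, and $m\le\kappa$) is a sound outline of the Mader--Watkins machinery and yields only the general bound $\kappa\ge\tfrac23(d+1)$. But the entire content of the statement is the improvement from $\tfrac23$ to $\tfrac34$ under the one-end hypothesis, and at precisely that point your text switches from proof to intention: ``the step I expect to be the main obstacle,'' ``the idea is to show,'' ``I would analyse.'' No chaining construction of translated fragments is actually carried out, no sharpened inequality on $m$ and $r$ is derived, and no contradiction with one-endedness is exhibited. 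The closing remark that ``one may alternatively invoke \cite[Lemma 2.4]{babai1997growth} directly'' is circular, since that lemma \emph{is} the statement to be proved; citing it cannot close the gap in a blind proof attempt.

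For context: the paper itself offers no proof of this lemma either --- it is imported verbatim from Babai's paper as an external black box (note the \verb|\qed| placed directly in the statement), and it is used only through its cubic consequence that a cubic Cayley graph of connectivity two cannot be one-ended, hence has at least two ends. So there is no internal argument to compare yours against; the honest assessment is that your proposal correctly reconstructs the standard atom-based skeleton that any proof of this type would start from, but leaves unproven exactly the quantitative step that distinguishes Babai's $\tfrac34(d+1)$ from Mader's $\tfrac23(d+1)$. To complete it along the lines you sketch, you would need to make precise how the infinite complementary fragment of a finite atom, together with its $\mathrm{Aut}(\Gamma)$-translates, produces either two infinite components after deleting a finite set (contradicting one-endedness) or an improved inequality between $m$, $r$, $d$ and $\kappa$; as written, neither alternative is established.
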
	
	
	For the rest of the paper, we assume that $G=\langle S\rangle $ is an infinite finitely generated group such that $\Gamma=Cay(G,S)$ is cubic with connectivity two.
	
	The proof idea of the following lemma is similar to \cite[Corollary 4.3]{HLMR}.
	\begin{lemma}\label{canonical td}
		Let $G=\langle S\rangle $ be an infinite finitely generated group such that $\Gamma=Cay(G,S)$ is cubic with connectivity two.
		Then there exists a reduced tree-decomposition $(T,\mathcal V)$ with the following properties:
		\begin{enumerate}[label=\rm(\roman*)]
			\item The adhesion sets of $(T,\mathcal V)$ have size exactly $2$ and induce splitting separations of minimum crossing number.
			\item The action of $G$ on $\Gamma$ induces an action on $V(T)$ and a transitive action on the set of separations corresponding to the adhesion sets.
		\end{enumerate}
	\end{lemma}
	
	\begin{proof}
		Since $\Gamma$ is an infinite graph of connectivity exactly $2$, Lemma \ref{one-ended} implies that $\Gamma$ has at least two ends.
		Let $(T',\mathcal V')$ be a canonical tree-decomposition obtained by Theorem \ref{nested} which distinguishes all $2$-distinguishable ends and since $\Gamma$ is $2$-connected, its adhesion sets have size $2$.
		We consider the set $\mathcal S$ of all induced separations by $(T,\mathcal V)$ and choose a separation $(A,B)$ of $\mathcal S$.
		It follows from \cite[Lemma 2.1]{t.d} that the orbit of $(A,B)$ under the action of $G$ is a nested set $\mathcal N$ of separations of order $2$.
		Now by Remark~\ref{nested-td}, the set $\mathcal N$ gives rise to a tree-decomposition $(T,\mathcal V)$ which has the desired properties.
	\end{proof}
	
	Notice that the transitive action on the set of separations in Lemma~\ref{canonical td} (ii) implies at most two orbits for $\Gamma[\mathcal V]:= \{\Gamma[V_t]\mid t \in V(T)\}$ under the action of $G$. Moreover, we can translate the action of item (ii) to an action of $G$ on $T$ in the natural way (and $G$ will clearly act transitively on $E(T)$): 
	\[
	gt=t'\Leftrightarrow gV_t=V_{t'}. 
	\]
	
	Let $\mathcal N$ be a nested set of separations of order two in such a way that $\mathcal N$ gives a tree-decomposition as in Lemma \ref{canonical td}.
	It is easy to see that every 2-separation of $\Gamma$ such that $A\cap B$ is a proper subset of $A$ and $B$ distinguishes at least two ends, see \cite[Lemma 3.4]{caycomplex}.
	For an arbitrary element $(A,B)\in \mathcal N$, there are three cases in terms of the degrees of the vertices of the separator in each side of the separation:
	
	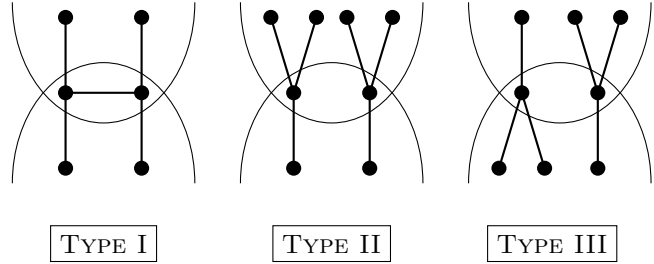
\begin{figure}[ht]
		\centering
		\begin{tikzpicture}
			\draw (0.5,0) node [circle,fill, inner sep=2pt] {};
			\draw (-0.5,0) node [circle,fill, inner sep=2pt] {};
			\draw (0.2,1) node [circle,fill, inner sep=2pt] {};
			\draw (0.8,1) node [circle,fill, inner sep=2pt] {};
			\draw (-0.8,1) node [circle,fill, inner sep=2pt] {};
			\draw (-0.2,1) node [circle,fill, inner sep=2pt] {};
			\draw (0.5,-1) node [circle,fill, inner sep=2pt] {};
			\draw (-0.5,-1) node [circle,fill, inner sep=2pt] {};

			\draw (2.5,0) node [circle,fill, inner sep=2pt] {};
			\draw (3.5,0) node [circle,fill, inner sep=2pt] {};
			\draw (3.8,1) node [circle,fill, inner sep=2pt] {};
			\draw (3.2,1) node [circle,fill, inner sep=2pt] {};
			\draw (2.8,-1) node [circle,fill, inner sep=2pt] {};
			\draw (2.2,-1) node [circle,fill, inner sep=2pt] {};
			\draw (2.5,1) node [circle,fill, inner sep=2pt] {};
			\draw (3.5,-1) node [circle,fill, inner sep=2pt] {};
			
			\draw (-2.5,0) node [circle,fill, inner sep=2pt] {};
			\draw (-3.5,0) node [circle,fill, inner sep=2pt] {};
			\draw (-2.5,-1) node [circle,fill, inner sep=2pt] {};
			\draw (-2.5,1) node [circle,fill, inner sep=2pt] {};
			\draw (-3.5,1) node [circle,fill, inner sep=2pt] {};
			\draw (-3.5,-1) node [circle,fill, inner sep=2pt] {};
			
			\draw[thick,] (-2.5,0)--(-3.5,0) ;
			\draw[thick,] (-2.5,1)--(-2.5,0) ;
			\draw[thick,] (-3.5,1)--(-3.5,0) ;
			\draw[thick,] (-2.5,-1)--(-2.5,0) ;
			\draw[thick,] (-3.5,-1)--(-3.5,0) ;
			
			\draw[thick,] (-0.5,-1)--(-0.5,0);
			\draw[thick,] (0.5,-1)--(0.5,0);
			
			\draw[thick,](-0.2,1)--(-0.5,0);
			\draw[thick,](-0.8,1)--(-0.5,0);
			\draw[thick,](0.2,1)--(0.5,0);
			\draw[thick,](0.8,1)--(0.5,0);
			
			\draw[thick,](3.5,-1)--(3.5,0);
			\draw[thick,](3.5,0)--(3.8,1);
			\draw[thick,](3.5,0)--(3.2,1);
			\draw[thick,](2.5,1)--(2.5,0);
			\draw[thick,](2.5,0)--(2.8,-1);
			\draw[thick,](2.5,0)--(2.2,-1);

			\node[draw] at (-3,-2) {{\sc{Type I}}};
			\node[draw] at (0,-2) {{\sc{Type II}}};
			\node[draw] at (2.9,-2) {{\sc{Type III}}};

			\draw (1.2,-1.2) arc(0:180:1.2cm and 1.6cm);
			\draw (1.2,1.2) arc(0:-180:1.2cm and 1.6cm);
			
			\draw (4.2,-1.2) arc(0:180:1.2cm and 1.6cm);
			\draw (4.2,1.2) arc(0:-180:1.2cm and 1.6cm);
			
			\draw (-1.8,-1.2) arc(0:180:1.2cm and 1.6cm);
			\draw (-1.8,1.2) arc(0:-180:1.2cm and 1.6cm);
			\begin{scope}[shift={(0.5,0.5)}]
			\end{scope}

		\end{tikzpicture}
		\caption{The three types of splitting $2$-separations in cubic Cayley graphs of connectivity $2$.} \label{cubic1}
	\end{figure}
	
	First, we dismiss the case of {\sc{Type III}} separations. This follows as an easy corollary of the following lemma. 
	
	\begin{lemma}\label{crossing_number_0}
		There is always a splitting $2$-separation of crossing number $0$ in $\Gamma$.
	\end{lemma}
	
	\begin{proof}
		Any {\sc{Type I}} separation has crossing number $0$, as the two vertices of the separator are connected with an edge and thus they are inseparable. Hence we can assume that every splitting $2$-separation is not of {\sc{Type I}}. We show that there is always a splitting $2$-separation $(A,B)$ on $A\cap B= \{x,y\}$ such that there are at least two internally disjoint $(x,y)$-paths in $\Gamma[A]$. This will directly imply that there is no $2$-separation $(C,D)$ crossing such an $(A,B)$ as in that case the single vertex in $C \cap D \cap A$ would separate $x$ and $y$ in $\Gamma[A]$, so $\cn(A,B)=0$.
		
		First, we note that for every $2$-separation $(A,B)$ both $A\setminus B, B\setminus A$ are tight components of $G \setminus (A\cap B)$: otherwise, by the $2$-connectivity of $\Gamma$ there are two tight connected components in either $A\setminus B$ or $B\setminus A$, hence there are two internally disjoint $(x,y)$-paths in $\Gamma[A]$ or $\Gamma[B]$. Then $(A,B)$ or $(B,A)$ is the desired separation. 
		
		Now, consider a splitting separation $(A,B)$ on $A \cap B= \{x,y\}$ such that there exist single vertices separating $x,y$ in $\Gamma[A]$ and let $S_A$ be the (non-empty) set of these cut vertices in $\Gamma[A]$. 
		Let $P$ be a shortest $(x,y)$-path  in $\Gamma[A]$. 
		Then $S_A\subseteq V(P)$. 
		It is easily verified that any two consecutive vertices of $S_A \cup \{x,y\}$ in the ordering inherited by $P$ constitute the separator of a $2$-separation $(C,D)$ in $\Gamma$ nested with $(A,B)$, and suppose w.l.o.g.~that $B \subseteq D$.
		Since $(A,B)$ is a splitting separation, say distinguishing $\omega_1, \omega_2$, there is a separation $(C,D)$ as above that is also splitting, distinguishing $\omega_1, \omega_2$ as well. Then $(C,D)$ is the desired separation as any vertex separating the vertices of $C\cap D$ in $\Gamma[C]$ must also separate $x,y$ in $\Gamma[A]$, contradicting the fact that $S_A\cap (C\setminus D)=\emptyset$.
	\end{proof}
	
	\begin{lemma}\label{Never type III}
		Any tree decomposition of $\Gamma$ as in Lemma~\ref{canonical td} is either of {\sc{Type I}} or {\sc{Type II}}.
	\end{lemma}
	
	\begin{proof}
		Let $(A,B)$ be a {\sc{Type III}} separation with $A \cap B=\{x,y\}$. We can assume that $|N(x) \cap A|=1$ and $|N(x) \cap B|=2$. Let $x'$ be the unique neighbor of $x$ in $A$ and $y'$ be the unique neighbor of $y$ in $B$. 
		Then $(A',B'):=(A\cup \{y'\}\setminus \{x\},B \cup \{x'\}\setminus \{x\})$ is a tight {\sc{Type III}} separation on $\{x',y'\}$, clearly crossing $(A,B)$ and distinguishing efficiently the same ends. The lemma is now a direct consequence of Lemma~\ref{canonical td} and Lemma~\ref{crossing_number_0}.
	\end{proof}
	
	In what follows, $(T,\mathcal V)$ will always be as in Lemma~\ref{canonical td}, either of {\sc{Type I}}  or {\sc{Type II}}  if not explicitly stated otherwise. 
	For a node $t\in V(T)$, we define 
	
	\[
	n(t): = \Gamma\left[\bigcup_{t \in N_T[t]} V_t\right].
	\]
	
	Recall that every adhesion set $V_t\cap V_{t'}$ of $(T,\mathcal V)$ induces the separation $(W_{t\setminus t'},W_{t'\setminus t})$ of $\Gamma$.
	Assume that $(T,\mathcal V)$ and the separations $(W_{t\setminus t'},W_{t'\setminus t})$ it induces are of {\sc{Type II}}. 
	We call such a separation $(W_{t\setminus t'},W_{t'\setminus t})$ \emph{small} if the vertices of the separator $V_t\cap V_{t'}$ have degree $1$  in $W_{t'\setminus t}$ and \emph{big} if they have degree $2$ in $W_{t'\setminus t}$.
	
	One of our main goals towards the general structure of the tree-decomposition of $\Gamma$ is to eventually prove in Lemma~\ref{Disjointness Lemma} that all adhesion sets of $(T,\mathcal V)$ are disjoint. 
	As a preparatory step for that, we need the following Lemma. 
	
	\begin{lemma}\label{Preinvolution}
		Every vertex $u$ belongs in at least one and at most two different adhesion sets of $(T,\mathcal V)$ (as subsets of $V(\Gamma)$ and not as intersections of different pairs of parts). Moreover, for every node $t$ of $T$ and every $t_1,t_2 \in N_T(t)$,  we have $|V_{t_1} \cap V_{t_2}| \leq 1$.
	\end{lemma}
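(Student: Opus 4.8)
The plan is to avoid any local, edge-by-edge analysis of the $2$-separators through a vertex and instead exploit the single feature that distinguishes our situation from a generic tree-decomposition: since $\Gamma=Cay(G,S)$ is a Cayley graph, $G$ acts on $V(\Gamma)$ by left multiplication, and this action is \emph{regular}, i.e. free and transitive. In particular $\st_G(u)=\{1\}$ for every vertex $u$. On the other hand, by Lemma~\ref{canonical td}(iii) this same action induces a \emph{transitive} action of $G$ on the set of separations corresponding to the adhesion sets, hence a transitive action on the adhesion sets viewed as $2$-element subsets $A\cap B\subseteq V(\Gamma)$ (recall every separation in $\mathcal N$ has order two). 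I would therefore fix one adhesion set, say $\{x_0,y_0\}$, so that the collection of \emph{all} adhesion sets is the orbit $\{\,g\{x_0,y_0\}=\{gx_0,gy_0\}: g\in G\,\}$, and then simply count how many members of this orbit contain a prescribed vertex $u$.

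For the lower bound I would first note that the decomposition is nontrivial: by Lemma~\ref{canonical td}(i) it distinguishes at least two ends, so $T$ has at least one edge and at least one adhesion set exists. Then, given any $u$, transitivity of the action on $V(\Gamma)$ produces a $g\in G$ with $gx_0=u$, whence $u\in g\{x_0,y_0\}$; so $u$ lies in at least one adhesion set.

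For the upper bound, this is where regularity does the work. An adhesion set $g\{x_0,y_0\}=\{gx_0,gy_0\}$ contains $u$ precisely when $gx_0=u$ or $gy_0=u$. Because the action is free and transitive, each of these two equations has a \emph{unique} solution, namely $g=ux_0^{-1}$ and $g=uy_0^{-1}$ respectively. Hence there are at most two group elements $g$ realising $u\in g\{x_0,y_0\}$, and consequently at most two distinct subsets, namely $\{u,\,ux_0^{-1}y_0\}$ and $\{u,\,uy_0^{-1}x_0\}$, can occur as adhesion sets through $u$. This proves the ``at most two'' part, and as a byproduct it isolates exactly when the count drops to one: the two subsets coincide iff $x_0^{-1}y_0=y_0^{-1}x_0$, i.e. iff $(x_0^{-1}y_0)^2=1$ is an involution that swaps the two endpoints of the adhesion set --- which is precisely the dichotomy this lemma is meant to set up.

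The main obstacle, in my view, is resisting the natural but ultimately insufficient local approach. One is tempted to argue via tightness (each separation through $u$ uses at least one of $u$'s three edges on each side) together with nestedness of $\mathcal N$, hoping that a chain of nested $2$-separators through a degree-$3$ vertex cannot be longer than two. This fails: one can have an infinite nested chain of tight $2$-separators $\{u,a_1\},\{u,a_2\},\dots$ all inducing the \emph{same} splitting of the three edges at $u$, with the ``small'' infinite sides strictly nested and each successive partner $a_{i}$ buried in the interior of the next region, so purely combinatorial/local data never bounds the number of \emph{distinct} partner vertices. The decisive point is therefore global and group-theoretic, and the whole proof rests on the freeness of the Cayley action together with the transitivity granted by Lemma~\ref{canonical td}(iii); once these are in place the bound is immediate from the two uniqueness statements above.
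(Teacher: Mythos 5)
Your proof is correct and takes essentially the same approach as the paper: both arguments rest on exactly the two ingredients you identify, the freeness of the left-multiplication action on $Cay(G,S)$ and the transitivity on adhesion sets from Lemma~\ref{canonical td}(iii). The paper merely packages the upper bound differently, taking two adhesion sets $\{x,u\}$ and $\{y,u\}$ and a $g\neq 1$ with $g\{x,u\}=\{y,u\}$, so that freeness forces $gx=u$, $gu=y$ and hence $y=ux^{-1}u$ is determined; your orbit count via the unique solutions $g=ux_0^{-1}$ and $g=uy_0^{-1}$ is the same computation in different bookkeeping.
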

	
	\begin{proof}
		The lower bound for the first assertion of the lemma follows directly from the transitivity of the actions of $G$ on $\Gamma$ and $E(T)$. For the upper bound, let $\{x,u\}$ and $\{y,u\}$ be two adhesion sets of the tree-decomposition meeting on $u$. 
		Since $G$ acts transitively on $E(T)$, there is a $1\neq g \in G$ such that $g\{x,u\}=\{y,u\}$. 
		Observe that since $g\neq 1$, we must have $gx=u$ and $gu=y$, from which we obtain $ux^{-1}u=y$. 
		Since $\{x,u\}$ and $\{y,u\}$ were arbitrary adhesion sets containing $u$, the upper bound follows. 
		
		For the second assertion, we clearly have that $|V_{t_1} \cap V_{t_2}|\leq 2$. Suppose that $|V_{t_1} \cap V_{t_2}|=2$.
		It follows from the definition of a tree-decomposition that $V_{t_1}\cap V_{t_2}\subseteq V_t$ and so $V_{t_1}\cap V_{t_2}$ is a subset of both $V_t\cap V_{t_1}$ and $V_t\cap V_{t_2}$.
		Therefore, we have $V_{t_1} \cap V_{t}=V_{t_2} \cap V_{t}=V_{t_1} \cap V_{t_2}:=S$. We observe that the $2$-connectivity of $\Gamma$ implies that all components of $\Gamma \setminus S$ are tight (and in particular, the one containing $V_t\setminus S$).
		
		Let $T_S$ be the subtree of $T$ whose corresponding parts contain $S$. Then $|V(T_S)|\geq 3$ and assume that $|V(T_S)|\geq 4$. Observe that $\Gamma\setminus S$ then has at least four tight components,
		which contradicts the fact that $\Gamma$ is cubic.
		Hence, $|V(T_S)|=3$ and so $V(T_S)=\{t,t_1,t_2\}$.
		Consequently, we see that $C_1= W_{t_1\setminus t}\setminus S$, $C_2= W_{t_2\setminus t}\setminus S$ and $C_3= (W_{t\setminus t_1}) \setminus (W_{t_2\setminus t})=(W_{t\setminus t_2}) \setminus (W_{t_1\setminus t})$ must be the components of $G\setminus S$, all of them tight. 
		
		This means that both vertices of $S$ must have degree one in each of $C_1,C_2,C_3$, respectively, and that $S$ induces an independent set.
		Since $G$ acts transitively on $\Gamma$ and $E(T)$ and $t$ was an arbitrary node of $t$, it follows that every vertex has degree at most one in every part it belongs in.
		We conclude that every part of $\mathcal V$ induces at most a matching where every pair of vertices in the same adhesion set is unmatched. 
		It easily follows that $\Gamma$ is the disjoint union of two infinite cubic trees, contradicting the fact that $\Gamma$ is connected.
	\end{proof}
	
	Let $H$ be an arbitrary graph with a set $U\subseteq V(H)$ and a subgraph $H'$ of $H$.
	The set $U$ is called \emph{connected in $H'$} if for every pair of vertices $u,u'\in U$ there is a $(u,u')$-path in $H'$.
	
	\begin{lemma}\label{Connectivity Lemma}
		Let $t$ be an arbitrary vertex of $T$.
		Then for every $t'\in N_T(t)$, the following holds:
		\begin{enumerate}[label=\rm(\roman*)]
			\item The adhesion set $V_t\cap V_{t'}$ is connected in at least one of $V_t,V_{t'}$.
			\item $V_t$ is connected in $n(t)$.
		\end{enumerate}
	\end{lemma}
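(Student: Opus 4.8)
The plan is to derive (ii) from (i) by a rerouting argument, and to prove (i) by splitting into the two separation types, with essentially all the work concentrated in the {\sc Type II} case.

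For the reduction of (ii) to (i), I would take two vertices $u,v\in V_t$ and any $u$--$v$ path $P$ in the connected graph $\Gamma$, and then push $P$ into $n(t)$. First I would note, using (T3), that every component of $\Gamma-V_t$ is contained in a single $W_{t'\setminus t}$ for some neighbour $t'$ of $t$; consequently every maximal subpath of $P$ all of whose internal vertices lie outside $V_t$ (an \emph{excursion}) enters and leaves $W_{t'\setminus t}$ through the adhesion set $V_t\cap V_{t'}=\{x,y\}$. If such an excursion starts and ends at the same vertex of $\{x,y\}$ I would delete it; if it runs from $x$ to $y$, I would replace it by a path joining $x$ and $y$ which, by (i), lies either in $V_t$ or in $V_{t'}$, and hence inside $n(t)$ in either case. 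Performing this for every excursion turns $P$ into a $u$--$v$ walk inside $n(t)$, which proves (ii).

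For (i), a {\sc Type I} separation has the two vertices of $V_t\cap V_{t'}$ adjacent, so the single edge between them already witnesses connectedness in both $V_t$ and $V_{t'}$. In a {\sc Type II} separation $(W_{t\setminus t'},W_{t'\setminus t})$ the separator vertices $x,y$ are non-adjacent, each sending one edge to one side and two edges to the other. The starting point is that this separation is tight, since it comes from the canonical family of Lemma~\ref{canonical td}, which distinguishes ends efficiently; hence each of $W_{t\setminus t'}\setminus\{x,y\}$ and $W_{t'\setminus t}\setminus\{x,y\}$ has a component adjacent to both $x$ and $y$, and through such a component $x$ and $y$ are joined by a path inside $\Gamma[W_{t\setminus t'}]$ and inside $\Gamma[W_{t'\setminus t}]$. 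I would then aim to show that, on the side where $x$ and $y$ each emit two edges, this connecting path can be confined to the single part meeting that side.

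The main obstacle is exactly this confinement: a priori the path supplied by tightness may leave $V_{t'}$ and wander through deeper parts $W_{s\setminus t'}$. To control it I would invoke the structural consequence of Lemma~\ref{Preinvolution}: since every vertex lies in at most two adhesion sets, the set of nodes whose part contains a given vertex is a subtree with at most three nodes, so each part is precisely the union of its two-element adhesion sets and every vertex of $V_{t'}$ is itself a separator vertex of some adhesion set at $t'$. Using this, the nestedness of the separations, and the cubic degree bound, I would run an innermost/shortest-path argument: take a shortest $x$--$y$ path in $\Gamma[W_{t'\setminus t}]$, and whenever it dips into a deeper region $W_{s\setminus t'}$ through an adhesion set $\{x_s,y_s\}$, reroute that dip along the near side. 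The cubic constraint allows only finitely many ways of distributing the three edges at a separator vertex, so a short case analysis (on how many adhesion sets each of $x,y$ lies in and where their edges go) should show that either the path already stays inside $V_{t'}$ or, in the degenerate configurations, $x$ and $y$ are instead connected inside $V_t$. In every case the adhesion set is connected in one of the two parts, establishing (i).
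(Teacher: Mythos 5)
Your reduction of (ii) to (i) is sound and is essentially the paper's own argument (the paper phrases it via the connectedness of the torso of $V_t$, replacing each virtual edge by a path supplied by (i); your excursion-replacement walk is the same thing), and your \textsc{Type I} observation is correct. The genuine gap is in the \textsc{Type II} case of (i), which is where all the content of the lemma sits. Your rerouting step is circular: to replace a dip of the shortest $x$--$y$ path into a deeper region $W_{s\setminus t'}$, entered and left through an adhesion set $\{x_s,y_s\}$, ``along the near side'', you need an $x_s$--$y_s$ path confined to the near part --- but that is exactly an instance of statement (i) for the adhesion set $\{x_s,y_s\}$, and there is no well-founded induction to appeal to: $T$ is infinite, parts may be infinite, and tightness only supplies a near-side $x_s$--$y_s$ path which may itself dip into further regions, so a shortest-path choice gives no progress measure. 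Moreover, the disjunction in (i) is not addressed by any actual mechanism in your sketch: in configurations that really occur later in the paper (e.g.\ the \textsc{Type II} orbit whose parts induce perfect matchings), the adhesion set is genuinely \emph{disconnected} in one of the two parts, so every $x$--$y$ path on that side must dip and no rerouting within it can succeed; your proposal would have to detect this failure and convert it into an $x$--$y$ path inside the \emph{other} part, and the ``short case analysis'' you assert would do this is precisely the missing proof.

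The paper bypasses all of this with one idea you never use: the transitivity of $G$ on the separations corresponding to the adhesion sets, Lemma~\ref{canonical td}(iii). Take any $(u,u')$-path $P$ between the two vertices of the target adhesion set; since $P$ is finite, an innermost-dip argument yields a part $V_s$ such that $P'=V(P)\cap V_s$ is a subpath of $P$ whose two endpoints form an adhesion set $S$ of $V_s$. Choosing $g\in G$ with $gS=V_t\cap V_{t'}$, the induced action on parts forces $gV_s\in\{V_t,V_{t'}\}$, so $gP'$ is a $(u,u')$-path inside $V_t$ or $V_{t'}$. This translation step is what makes the disjunction automatic --- the translated subpath lands in whichever of the two parts it lands in, with no need to decide which side is connected --- and it replaces your entire rerouting and case analysis, uniformly for both separation types. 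To salvage your approach you would either have to substitute this group translation for the near-side rerouting, or carry out in full the local case analysis you defer, which would amount to redoing the structural work of the paper's later sections inside this lemma.
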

	
	\begin{proof}
		\begin{enumerate}[label=(\roman*)]
			\item Let $V_t\cap V_{t'}= \{u,u'\}$ and $P$ be a path between $u$ and $u'$. Suppose that $P$ is contained in $W_{t\setminus t'}$ and consider the tree decomposition $(T', \mathcal V')$ of $P$ by restricting $(T, \mathcal V)$ on the parts that contain at least two vertices of $P$. Notice that every adhesion set of $(T', \mathcal V')$ has size exactly $2$ and root $T'$ on $t$. Since $T'\subseteq T$, the second assertion of Lemma~\ref{Preinvolution} holds for $(T', \mathcal V')$ as well, therefore we have that every part of $(T', \mathcal V')$ contains at least one new vertex of $P$ compared to its predecessor in the tree-order of $T'$ when rooted on $t$.
			Since $P$ is finite, it follows that $T'$ is finite as well. We eventually find a part $V_s$ of $(T,\mathcal V)$ --- in particular a leaf of $T'$ --- such that $P'=V(P)\cap V_s$ is a subpath of $P$ whose end vertices constitute exactly one of the adhesion sets $S$ of $V_s$.
			Recall that $G$ acts transitively on the set of adhesion sets of $(T,\mathcal V)$.
			Hence, we can map $S$ to $V_t\cap V_{t'}$, say $gS=V_t\cap V_{t'}$. Then $gs\in \{t,t'\}$. Thus, $gP'$ is a $(u,u')$-path that either lies in $V_t$ or $V_t'$.
			
			\item  Since $\Gamma$ is connected, the torso of $V_t$ is a connected graph. 
			The result follows by replacing the virtual edges of a path within the torso of $V_t$ by paths obtained by (i).\qedhere
		\end{enumerate}
	\end{proof}
	
	The next crucial lemma implies that all adhesion sets in $\mathcal N$ are disjoint.
	Recall that $(T,\mathcal V)$ is a tree-decomposition as in Lemma~\ref{canonical td}.
	
	\begin{lemma}\label{Disjointness Lemma}
		Let $t$ be a node of $T$. 
		Then for every $t_1,t_2 \in N_T(t)$,  we have $V_{t_1} \cap V_{t_2}=\emptyset$.
	\end{lemma}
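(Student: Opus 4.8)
The plan is to argue by contradiction: suppose $V_{t_1}\cap V_{t_2}\neq\emptyset$ for two distinct neighbours $t_1,t_2$ of $t$ and pick $u$ in this intersection. By property (T3), since $t$ lies on the $t_1$--$t_2$ path in $T$, we have $u\in V_t$, so $u$ lies in the two adhesion sets $\{u,a\}:=V_t\cap V_{t_1}$ and $\{u,b\}:=V_t\cap V_{t_2}$, which are exactly the (at most) two adhesion sets allowed by Lemma~\ref{Preinvolution}. As both are contained in $V_t$, one gets $V_{t_1}\cap V_{t_2}\subseteq\{u,a\}\cap\{u,b\}$, so the argument naturally splits according to whether $a\neq b$ (whence $V_{t_1}\cap V_{t_2}=\{u\}$, and neither $a\in V_{t_2}$ nor $b\in V_{t_1}$) or $a=b$.

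The core is a budget argument on the three edges at $u$. Writing $\sigma_1,\sigma_2$ for the separations induced by the edges $tt_1,tt_2$, efficiency forces both to be tight, so each separator vertex has a neighbour strictly on each side; in particular $u$ has a neighbour strictly beyond $\sigma_1$ and one strictly beyond $\sigma_2$, and since the far sides $W_{t_1\setminus t}\setminus V_t$ and $W_{t_2\setminus t}\setminus V_t$ are disjoint, these already consume two distinct edges at $u$. I would then feed in the type of the separation. For \textsc{Type~I} the two separator vertices are adjacent, so $u\sim a$ and $u\sim b$; if $a\neq b$ these are two further edges at $u$, forcing $\deg u\geq 4$ and contradicting cubicness at once.

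Next I would dispose of $a=b$ in \textsc{Type~I}. Here both edges $tt_1,tt_2$ carry the same separator $\{u,a\}$, and since in \textsc{Type~I} each of $u,a$ sends exactly one edge to each side, $\Gamma-\{u,a\}$ has precisely the two far components, which the two branches exhaust. Hence $V_t\subseteq A_1\cap A_2=\{u,a\}$, so $V_t=\{u,a\}$ and $t$ has degree two in $T$. But then $\sigma_1$ and $\sigma_2$ are the two orientations of one and the same separation of $\Gamma$, which is impossible because distinct edges of $T$ must correspond to distinct separations of the nested set $\mathcal N$; equivalently, the relation $V_{t_{-1}}=V_t=V_{t_1}$ propagates along the resulting path and places $u$ in infinitely many parts, contradicting Lemma~\ref{Preinvolution}.

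The hard part will be \textsc{Type~II} with $a\neq b$ (the \textsc{Type~II} instances of $a=b$ reduce to it), where the separator vertices are non-adjacent, the third edge of $u$ simply lands inside $V_t$, and the pure edge count is therefore consistent. Here I would exploit the global action. Since a \textsc{Type~II} separation is asymmetric (its two sides carry two and four boundary edges and so cannot be swapped), $G$ acts on $T$ without inversion, hence with two orbits on $V(T)$ and a coherent orientation of the edges towards their ``big'' side. One orientation makes both far sides big, forcing two private neighbours of $u$ beyond each of $\sigma_1,\sigma_2$ and again $\deg u\geq 4$; the other orientation forces the element $g=ua^{-1}$ supplied by Lemma~\ref{Preinvolution} (which satisfies $ga=u$, $gu=b$ and carries $tt_1$ to $tt_2$) to fix $t$, so that $g$ permutes the finite set $V_t$ and hence has finite order. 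This yields a finite cyclic family of branches at $t$ whose separators $g^i\{u,a\}$ share consecutive vertices, and I expect to close the argument by showing that such cyclic sharing is incompatible with the separations efficiently distinguishing ends, in combination with the connectivity of the adhesion sets and parts from Lemma~\ref{Connectivity Lemma}. Reconciling this last configuration is where I anticipate the real work: neither the local degree count nor tightness alone suffices, and one must use the full strength of the $G$-action on the canonical tree-decomposition of Lemma~\ref{canonical td}.
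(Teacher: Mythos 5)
Your \textsc{Type I} analysis is sound, and the degenerate-part argument for $a=b$ (forcing $V_t=\{u,a\}$ and two tree-edges inducing the same unordered separation) is a legitimate alternative to the paper's counting of tight components around the subtree of parts containing $S$. The problems are in \textsc{Type II}. First, your dichotomy ``both far sides big / both far sides small'' is not exhaustive, and the device you use to make it exhaustive is unjustified: edge-transitivity without inversion does \emph{not} imply two vertex orbits on a tree (the translation action of $\mathbb{Z}$ on a double ray is edge-transitive, inversion-free and vertex-transitive), so at this point you may not assume a coherent orientation of the two edges at $t$. The mixed configuration --- $W_{t_1\setminus t}$ big and $W_{t_2\setminus t}$ small --- survives your edge budget at $u$ ($2+1=3$ edges) and needs its own refutation. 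This is precisely the paper's ``separations not in the same orbit'' case, which it kills by showing $\deg_{V_t}(x)=0$, propagating via transitivity to conclude that \emph{every} vertex of $V_t$ has degree $0$ in $V_t$, and then contradicting Lemma~\ref{Connectivity Lemma}(i) by translating an $(x,y)$-path into $V_t$ with the element $g$ realizing the orbit equation. Note also that you cannot repair the two-orbit claim by appealing to the colour structure of the parts established in Section 5: that argument rests on Corollary~\ref{Neighbourhood Lemma}, which is itself a consequence of the Disjointness Lemma, so the appeal would be circular.

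Second, the case you yourself flag as ``the real work'' is a plan, not a proof: the proposed route via the finite order of $g=ua^{-1}$, a ``cyclic family of branches'' and an alleged incompatibility with efficiently distinguishing ends is never carried out (and the premise that $g$ must fix $t$ depends again on the unproven two-orbit structure). The paper closes this case by elementary bookkeeping instead: in the both-small configuration each vertex has degree exactly one in every part containing it, so every part induces a matching, whence the adhesion set is connected in neither of its two incident parts --- contradicting Lemma~\ref{Connectivity Lemma}(i). Finally, the parenthetical claim that the \textsc{Type II} instances of $a=b$ ``reduce to'' the $a\neq b$ case is unsupported: with the separator $\{u,a\}$ shared by both tree-edges, the six boundary edges of $\{u,a\}$ allow $\Gamma\setminus\{u,a\}$ to have \emph{three} components, so $V_t\neq\{u,a\}$ and your \textsc{Type I}-style collapse is unavailable; the paper treats $|V_{t_1}\cap V_{t_2}|=2$ separately, counting tight components to pin down the subtree $T_S$ and again landing in the matching contradiction. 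So the overall skeleton (case split on $|V_{t_1}\cap V_{t_2}|$ and on the local type, then degree counting plus the $G$-action) parallels the paper, but the two genuinely hard subcases --- exactly where the paper does its real work --- are respectively mishandled and left open.
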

	
	\begin{proof}
		By Lemma~\ref{Preinvolution}, we have that $|V_{t_1} \cap V_{t_2}|\leq 1$. Suppose that $|V_{t_1} \cap V_{t_2}|=1$. 
		Let $V_{t_1}\cap V_t=\{x,y\}$, $V_{t_2}\cap V_t=\{x,z\}$. By Lemma~\ref{Preinvolution}, these are the only adhesion sets of $V_t$ containing $x$.
		We can assume that $(T,\mathcal V)$ is of {\sc{Type II}}: indeed, assume that $(T,\mathcal V)$ is of {\sc{Type I}}. 
		By the tightness of all separations in $\mathcal N$, we have that $x$ has at least one neighbor in each of $V_{t_1}\setminus V_t$ and $V_{t_2}\setminus V_t$ in addition to $y$ and $z$, a contradiction to $\Gamma$ being cubic. 
		Hence, $(T,\mathcal V)$ is of {\sc{Type II}}.
		
		Now, by the transitive action on $E(T)$ we have that $(W_{t\setminus t_1}, W_{t_1\setminus t})$ is either isomorphic to $(W_{t\setminus t_2}, W_{t_2\setminus t})$ or its inverse (as an ordered separation).
		In the latter case, assume there are $g \in G$, $t\in V(T)$, such that 
		\begin{equation}\label{separation}
			(W_{t\setminus t_1}, W_{t_1\setminus t})=(gW_{t_2\setminus t}, gW_{t\setminus t_2}).
		\end{equation}
		Recall the definition of small and big separations from Page 7. We can assume w.l.o.g.~that the above separations are small (a similar situation arises in case they are big separations). 
		We observe that this implies that $\deg_{W_{t_1\setminus t}}(x)=1$, $\deg_{W_{t_2\setminus t}}(x)=2$ and the degree of $x$ in the component of $\Gamma\setminus\{x,y,z\}$ containing $V_t\setminus\{x,y,z\}$ is $0$. 
		By Lemma~\ref{Connectivity Lemma}, there is an $(x,y)$-path $P$ lying completely within $V_{t_1}$, but by~(\ref{separation}) we have that the $(x,y)$-path $gP$ lies within $V_t$, which yields a contradiction to the degree of $x$. 
		
		
		
		Otherwise, 
		for every $t\in V(T)$ there is a $g \in G$ such that 
		\[
		(W_{t\setminus t_1}, W_{t_1\setminus t})=(gW_{t\setminus t_2}, gW_{t_2\setminus t}).
		\]
		\noindent
		Let $C'_1, C'_2, C'_3$ be the components of $\Gamma\setminus \{x,y,z\}$ corresponding to $V_{t_1}, V_{t_2}, V_t$. Then we directly observe that both separations $(W_{t\setminus t_1}, W_{t_1\setminus t})$, $(W_{t\setminus t_2}, W_{t_2\setminus t})$ must be small. The only way for this to happen is when $x$ has degree one in each of $C_1,C_2,C_3$ and therefore degree at most one in $V_{t_1}, V_{t_2}, V_t$ (in other words, every part that contains it). By the transitivity of $\Gamma$ and the fact that $t$ was arbitrary, we conclude that every vertex of $\Gamma$ has degree at most one in every part that contains it, to obtain a contradiction to Lemma~\ref{Connectivity Lemma} exactly as before.
	\end{proof}
	
	Lemma~\ref{Disjointness Lemma} has some important consequences. 
	Combined with Lemma~\ref{Preinvolution}, we immediately obtain the following.
	
	\begin{coro}\label{Neighbourhood Lemma}
		Every vertex $u$ of $\Gamma$ is contained in exactly two parts $t,t'\in V(T)$. In addition, $N_{\Gamma}(u) \subseteq V_t \cup V_{t'}$ and every part is the disjoint union of its adhesion sets.\qed
	\end{coro}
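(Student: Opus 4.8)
The plan is to derive the whole statement directly from Lemma~\ref{Preinvolution} and Lemma~\ref{Disjointness Lemma}, using the standard fact that for a fixed vertex $u$ the set $T_u:=\{t\in V(T)\mid u\in V_t\}$ induces a subtree of $T$ by property (T3). First I would show that $u$ lies in exactly two parts. For the upper bound, suppose $u$ lay in three or more parts, so that $T_u$ has at least three nodes. Any tree on at least three nodes contains a node of degree at least two; hence $T_u$ contains a path $t_1tt_2$ with $t_1,t_2\in N_T(t)$ and $u\in V_{t_1}\cap V_{t_2}$, contradicting Lemma~\ref{Disjointness Lemma}. For the lower bound, Lemma~\ref{Preinvolution} guarantees that $u$ belongs to at least one adhesion set $V_t\cap V_{t'}$, and therefore to at least the two parts $V_t,V_{t'}$. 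Combining the bounds, $u$ lies in exactly two parts, say $V_t$ and $V_{t'}$; since $\{t,t'\}$ is connected in $T$, we must have $tt'\in E(T)$, so that $V_t\cap V_{t'}$ is an adhesion set containing $u$.

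Next I would settle the neighbourhood containment. For any $v\in N_\Gamma(u)$, property (T2) yields a node $s$ with $u,v\in V_s$; since $u$ lies only in $V_t$ and $V_{t'}$, we get $s\in\{t,t'\}$ and hence $v\in V_t\cup V_{t'}$. This gives $N_\Gamma(u)\subseteq V_t\cup V_{t'}$ immediately.

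Finally, for the disjoint-union claim, I would fix a part $V_s$ and consider its incident adhesion sets $V_s\cap V_{s'}$ for $s'\in N_T(s)$. Their pairwise disjointness is immediate from Lemma~\ref{Disjointness Lemma}: for distinct $s_1,s_2\in N_T(s)$ the sets $V_s\cap V_{s_1}$ and $V_s\cap V_{s_2}$ are contained in the disjoint parts $V_{s_1}$ and $V_{s_2}$. That they cover $V_s$ follows from the first paragraph: any $u\in V_s$ lies in exactly two parts, one of which is $V_s$, so the other is some $V_{s'}$ with $s'\in N_T(s)$, placing $u$ into the incident adhesion set $V_s\cap V_{s'}$.

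I expect the only mildly delicate point to be the bookkeeping in the first step, namely justifying that three parts force a degree-two node of $T_u$ and that two parts containing $u$ must be adjacent in $T$; both reduce to elementary facts about subtrees, so no real obstacle should arise, and the remainder is a straightforward assembly of the two preceding lemmas with the axioms (T2) and (T3).
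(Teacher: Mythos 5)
Your proof is correct and follows exactly the route the paper intends: the paper states the corollary as an immediate consequence of Lemma~\ref{Preinvolution} (lower bound via membership in an adhesion set) and Lemma~\ref{Disjointness Lemma} (upper bound via the subtree $T_u$ and axiom (T3)), and your write-up simply makes that derivation, together with the use of (T2) for the neighbourhood containment, fully explicit.
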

	
	Moreover, let $\{x,y\}$ be an adhesion set. Observe that $xy^{-1}\{x,y\}$ is again an adhesion set containing $x$, so $xy^{-1}\{x,y\}=\{x,y\}$ with $xy^{-1}x=y$. We obtain:
	
	\begin{lemma}\label{Involution lemma}
		For every adhesion set $\{x,y\}$, we have $(xy^{-1})^2=1$.\qed
	\end{lemma}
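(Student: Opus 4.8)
The plan is to exploit that the vertex set of $\Gamma=Cay(G,S)$ is the group $G$ itself and that $G$ acts on $\Gamma$ by left multiplication, an action which by Lemma~\ref{canonical td}(iii) permutes the adhesion sets of $(T,\mathcal V)$. The single ingredient I would isolate first is that each vertex lies in \emph{exactly one} adhesion set. This follows from Corollary~\ref{Neighbourhood Lemma}: every vertex $u$ is contained in exactly two parts $V_t,V_{t'}$, and any adhesion set containing $u$ is an intersection $V_s\cap V_{s'}$ with $u\in V_s\cap V_{s'}$, forcing $\{s,s'\}=\{t,t'\}$; hence $V_t\cap V_{t'}$ is the unique adhesion set through $u$.

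With this in hand, the argument for a given adhesion set $\{x,y\}$ is essentially a one-line computation, as already indicated in the paragraph preceding the statement. I would set $g:=xy^{-1}\in G$ and consider its action by left multiplication. Since $gy=xy^{-1}y=x$, the image $g\{x,y\}=\{xy^{-1}x,\,x\}$ is again an adhesion set, and it contains $x$. By the uniqueness just established, $g\{x,y\}=\{x,y\}$, so $\{xy^{-1}x,x\}=\{x,y\}$. The element $xy^{-1}x$ cannot equal $x$, for that would give $x=y$, contradicting that an adhesion set consists of two distinct vertices; therefore $xy^{-1}x=y$. Multiplying on the right by $y^{-1}$ yields $(xy^{-1})^2=xy^{-1}xy^{-1}=yy^{-1}=1$, as desired.

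I do not anticipate a serious obstacle, since the conclusion is nearly immediate once the uniqueness of the adhesion set through each vertex is available. The only points demanding care are precisely that uniqueness, which is where the full force of Lemma~\ref{Disjointness Lemma} (via Corollary~\ref{Neighbourhood Lemma}) enters, and the ruling out of the degenerate case $xy^{-1}x=x$; both are handled cleanly above.
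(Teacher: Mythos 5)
Your proof is correct and matches the paper's own argument, which appears in the sentence immediately preceding the lemma: there too, one observes that $xy^{-1}\{x,y\}$ is an adhesion set containing $x$, invokes the disjointness of adhesion sets (Corollary~\ref{Neighbourhood Lemma}, via Lemma~\ref{Disjointness Lemma}) to conclude $xy^{-1}\{x,y\}=\{x,y\}$, and deduces $xy^{-1}x=y$. Your write-up merely makes explicit two steps the paper leaves implicit --- the uniqueness of the adhesion set through a vertex and the exclusion of $xy^{-1}x=x$ --- both of which you handle correctly.
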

	
	Lemma~\ref{Involution lemma} implies the following Corollary for the edge stabilizers of $T$.
	
	\begin{coro}\label{Edge stabiliser}
		Let $tt'\in E(T)$. Then $\st_G(V_t\cap V_{t'}) \cong \mathbb Z_2$.\qed
	\end{coro}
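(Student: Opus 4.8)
The plan is to compute the setwise stabilizer $\st_G(V_t\cap V_{t'})$ directly, writing $\{x,y\}:=V_t\cap V_{t'}$ and exploiting that the action of $G$ on $\Gamma=Cay(G,S)$ is the left-regular action. The single fact I would lean on throughout is that this action is \emph{free}: if $gv=v$ for a vertex $v$ (viewed as a group element, with $g$ acting by left multiplication), then $g=1$. In particular the pointwise stabilizer of any vertex is trivial, and an element of $G$ is completely determined by the image of one vertex. Recall also that by definition an element of $\st_G(\{x,y\})$ sends $\{x,y\}$ into itself, and since the action is injective it sends $\{x,y\}$ \emph{onto} itself; so each $g\in\st_G(\{x,y\})$ either fixes both $x,y$ or swaps them.

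First I would dispose of the elements fixing $\{x,y\}$ pointwise: if $gx=x$, then freeness forces $g=1$, so the identity is the only such element. Next I would pin down the swapping elements. By freeness there is \emph{at most one} $g$ with $gx=y$, since any two such elements agree after composing one with the inverse of the other and fixing $x$. To exhibit one, I would invoke Lemma~\ref{Involution lemma}, which gives $(xy^{-1})^2=1$, equivalently $xy^{-1}x=y$. Setting $g:=xy^{-1}$, one checks $g\cdot y = (xy^{-1})y = x$ and $g\cdot x = xy^{-1}x = y$, so $g$ swaps $x$ and $y$ and thus lies in $\st_G(\{x,y\})$; moreover $g\neq 1$ because $x\neq y$, and $g^2=1$ again by Lemma~\ref{Involution lemma}.

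Combining the two steps yields $\st_G(\{x,y\})=\{1,\,xy^{-1}\}$, a group of order two generated by the involution $xy^{-1}$, hence isomorphic to $\mathbb{Z}_2$. I do not expect a genuine obstacle: the only point requiring care is that the swapping element both \emph{exists} and is \emph{unique}, but existence is precisely the content of Lemma~\ref{Involution lemma} and uniqueness is immediate from freeness of the regular action. This is exactly why the statement can be recorded as an immediate corollary of Lemma~\ref{Involution lemma}.
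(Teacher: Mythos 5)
Your proof is correct and matches the paper's intended argument: the corollary is stated with \qed precisely because, as you observe, freeness of the left-regular action on $Cay(G,S)$ makes $\st_G(\{x,y\})=\{1,xy^{-1}\}$ an immediate consequence of Lemma~\ref{Involution lemma}. Your explicit verification that $xy^{-1}$ swaps $x$ and $y$ and that freeness rules out any other nontrivial stabilizing element is exactly the omitted routine check.
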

	
	Lastly, we will invoke the following folklore Lemma from the well-known theory of tree decompositions into $3$-connected components (see \cite{richter2004decomposing,tutte1966connectivity} as an example) when we argue about the planarity of $\Gamma$ and $G$ in each case that arises.
	
	\begin{lemma}\label{planarity}
		Let $(T, \mathcal V)$ be a tight tree-decomposition of a (locally finite) connected graph $H$ with finite parts and adhesion at most $2$. Then $\Gamma$ is planar if and only if the torso of every part of $(T, \mathcal V)$ is planar. 
	\end{lemma}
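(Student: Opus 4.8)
The plan is to prove the two directions separately, using the adhesion bound of two and the tightness hypothesis in the ``only if'' direction, and a gluing argument together with the standard reduction of infinite planarity to finite subgraphs in the ``if'' direction. Throughout, $H$ denotes the graph in the statement (the $\Gamma$ of the conclusion), and for an edge $tt' \in E(T)$ I write $S_{tt'} := V_t \cap V_{t'}$ for the corresponding adhesion set, recalling that $|S_{tt'}| \le 2$.

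For the forward direction I would show that the torso of each part is a topological minor of $H$; since topological minors of planar graphs are planar, this suffices. Fix a part $V_t$ and a neighbour $t'$ of $t$ in $T$ with $|S_{tt'}| = 2$, say $S_{tt'} = \{x,y\}$ (adhesion sets of size at most one contribute no virtual edge). Because the induced separation $(W_{t\setminus t'}, W_{t'\setminus t})$ is tight, the far side $W_{t'\setminus t} \setminus S_{tt'}$ contains a tight component $C$ with $N(C) = \{x,y\}$, which yields an $x$--$y$ path $P_{t'}$ whose interior lies in $C$ and hence is disjoint from $V_t$. For two distinct neighbours $t_1, t_2$ of $t$, property (T3) forces $W_{t_1\setminus t} \cap W_{t_2\setminus t} \subseteq V_t$, so the interiors of $P_{t_1}$ and $P_{t_2}$ are disjoint. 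Replacing each virtual edge of the torso of $V_t$ by the corresponding path $P_{t'}$ while keeping $H[V_t]$ intact then exhibits a subdivision of the torso inside $H$, so the torso is planar.

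For the backward direction I would first reduce to finite subgraphs: since $H$ is connected and locally finite it is countable, and a countable graph is planar if and only if every finite subgraph is planar (a standard compactness consequence of Kuratowski's theorem, using that a subdivision of $K_5$ or $K_{3,3}$ is finite). Any finite subgraph lies in $H_0 := H[\bigcup_{t\in T_0} V_t]$ for a finite subtree $T_0 \subseteq T$, and restricting $(T,\mathcal V)$ to $T_0$ gives a tree-decomposition of $H_0$ of adhesion at most two whose torsos are subgraphs of the corresponding torsos of $H$, hence planar. It therefore suffices to prove that a graph with a finite tree-decomposition of adhesion at most two and planar torsos is planar, which I would do by induction on $|T_0|$: pick a leaf $t_0$ of $T_0$ with neighbour $s$ and separator $S := S_{t_0 s}$, so that $H_0$ is the union of $H[V_{t_0}]$ and $H[\bigcup_{t \ne t_0} V_t]$ meeting exactly in $S$. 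Adding the edge on $S$ (when $|S| = 2$) to each side produces, respectively, a subgraph of the torso of $t_0$ and a graph still satisfying the induction hypothesis, both planar; then $H_0$ is a subgraph of their $2$-sum along this edge, which is planar by the gluing lemma (choose planar embeddings of the two sides with the edge $xy$ on the outer face and paste one into the outer face of the other along $xy$). The cases $|S| = 1$ (a $1$-sum at a cut vertex) and $|S| = 0$ (a disjoint union) are immediate, completing the induction.

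The main obstacle I anticipate is the gluing step in the backward direction: one must check that a planar embedding can always be chosen with the prescribed adhesion edge $xy$ bounding the outer face, so that the two sides paste together without crossings, and that deleting the auxiliary edge $xy$ afterwards (when it is only virtual and not in $E(H)$) does no harm. The tightness hypothesis, by contrast, enters only in the forward direction, where it supplies the connecting paths $P_{t'}$ that realise each torso as a topological minor; without it a virtual edge need not be routable on its far side, and the ``only if'' direction could fail.
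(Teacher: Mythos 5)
Your proof is correct, and it splits relative to the paper's: your forward direction is essentially the paper's own argument, while your backward direction takes a genuinely different route. For the ``only if'' part the paper also realises each torso as a topological minor of $H$, merely asserting that ``there is always a path outside of the part'' joining the two vertices of an adhesion set; your extraction of that path from a tight component on the far side, and your use of (T3) to make the paths for distinct neighbours of $t$ internally disjoint and disjoint from $V_t$, is exactly the bookkeeping the paper leaves implicit, and you correctly locate tightness as the hypothesis that makes the virtual edges routable. For the ``if'' part the paper argues directly on the infinite graph: embed $T$ in the plane and combine the planar embeddings of all (possibly infinitely many) torsos along the adhesion sets following that embedding. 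You instead reduce to finite subgraphs via the countable Kuratowski theorem of Dirac and Schuster (legitimate here: $H$ is connected and locally finite, hence countable, and any subdivision of $K_5$ or $K_{3,3}$ is a finite subgraph) and then induct on a finite subtree, gluing a leaf torso to the rest by a $2$-sum along the adhesion edge, using that a planar graph always admits an embedding with a prescribed edge on the outer face. What your route buys is rigour at the point where the paper is most hand-wavy --- combining infinitely many embeddings ``following'' an embedding of $T$ quietly requires coherent simultaneous choices for all parts --- at the cost of producing no explicit embedding of the infinite graph, which the paper's direct gluing does. Two small steps worth pinning down in your induction, both routine: the restriction of $(T,\mathcal V)$ to a finite subtree $T_0$ is indeed a tree-decomposition of $H_0$ (the Helly property of subtrees guarantees every edge of $H_0$ lies in a part with index in $T_0$), and adding the auxiliary edge on $S$ leaves all torsos unchanged, so the induction hypothesis genuinely still applies to the non-leaf side.
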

	
	\begin{proof}
		The forward implication follows from the fact that the torso of a part in $(T, \mathcal V)$ is a topological minor of $H$: for every virtual edge of the part realized by an adhesion set of size exactly two, there is always a path outside of the part that connects the two vertices of the adhesion set.
		
		For the backward implication, we will inductively embed the planar torsos of the parts of $(T, \mathcal V)$ on the plane by gluing (the torso of) each new bag at the appropriate adhesion set. For the inductive step, we consider the adhesion set $V_t \cap V_{t'} = \{x,y\}$ of the new bag $V_t$ along which it is going to be amalgamated with the already embedded torso of $V_{t'}$. We simply replace the edge $xy$ with a planar embedding of the torso of $V_t$ (restricted inside a thin disk around the embedded $xy$), keeping the edge $xy$ in the so far embedded part of $\Gamma$ depending on whether it is an actual edge of $\Gamma$ or a virtual edge of the torsos of $V_t, V_{t'}$, accordingly. It is straightforward to check that the above inductive strategy to combine the planar embeddings of the torsos along the adhesion sets produces an embedding of $\Gamma$ on the plane, since each virtual edge of the torsos is replaced with a new bag at some inductive step.
	\end{proof}
	
	Our goal in the following sections is to determine the structure of the parts of the tree-decomposition of $\Gamma$ obtained by Corollary~\ref{canonical td} in order to compute their stabilizers and apply Lemma~\ref{bass-serre} or~\ref{subdivided bass-serre}.
	
	\section{Tree-decomposition of {\sc{Type I}}}
	
	In this section, we assume that $(T, \mathcal V)$ is of {\sc{Type I}}. Suppose that $b$ is the label of the edge induced by the adhesion sets of $(T, \mathcal V)$, which by  Lemma~\ref{Involution lemma} is an involution. It will be enough to study two neighboring parts $V_t,V_{t'}$ to obtain the general structure of $(T, \mathcal V)$. In order to simplify this, we can assume w.l.o.g~that $ V_t \cap V_{t'}=\{1,b\}$, 
	so $\st_G(V_t \cap V_{t'})=\langle b \rangle$.
	
	Notice that if $G$ acts on $(T, \mathcal V)$ with inversion, there is an element in $g \in \st_G(V_t \cap V_{t'})=\langle b \rangle$ that inverts $V_t, V_{t'}$. Let us express this easy fact with the following lemma.
	
	\begin{lemma}\label{Inversion Type I}
		$G$ acts with inversion on $(T, \mathcal V)$ if and only if $b$ inverts $V_t$ and $V_{t'}$.\qed
	\end{lemma}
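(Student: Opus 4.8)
The plan is to prove the two implications separately, observing that the backward direction is immediate and that the forward direction reduces, via edge-transitivity, to identifying the unique nontrivial edge-stabiliser element with $b$.

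For the backward implication, I would simply note that if $b$ inverts $V_t$ and $V_{t'}$, then the element $b\in G$ swaps the endpoints of the edge $tt'\in E(T)$, which is by definition an inversion; hence $G$ acts with inversion on $(T,\mathcal V)$ and nothing more is needed.

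For the forward implication, suppose $G$ acts with inversion, so that some $g\in G$ inverts some edge $ss'\in E(T)$, i.e. $gV_s=V_{s'}$ and $gV_{s'}=V_s$. First I would use the transitivity of the $G$-action on $E(T)$ from Lemma~\ref{canonical td}(iii) to choose $h\in G$ with $hV_s=V_t$ and $hV_{s'}=V_{t'}$. Then I would pass to the conjugate $g':=hgh^{-1}$ and check that it inverts $V_t,V_{t'}$, i.e. $g'V_t=V_{t'}$ and $g'V_{t'}=V_t$.

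Finally, I would identify $g'$ with $b$. Since $g'$ interchanges $V_t$ and $V_{t'}$, it fixes their intersection setwise, $g'(V_t\cap V_{t'})=g'V_t\cap g'V_{t'}=V_t\cap V_{t'}$, so $g'\in\st_G(V_t\cap V_{t'})$. By Corollary~\ref{Edge stabiliser} and the normalisation $V_t\cap V_{t'}=\{1,b\}$, this stabiliser equals $\langle b\rangle=\{1,b\}$; as $g'$ moves the nodes $t,t'$ it is not the identity, forcing $g'=b$ and hence $b$ inverts $V_t,V_{t'}$. The only step needing the slightest care---and the closest thing here to an obstacle---is verifying that conjugating an inverting element by the edge-transporting element $h$ again produces an inverting element, which is a one-line check.
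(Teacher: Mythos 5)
Your proof is correct and follows essentially the same route as the paper, which records this lemma as an ``easy fact'' after remarking that an inverting element can be found inside $\st_G(V_t\cap V_{t'})=\langle b\rangle$: transport the inversion to the edge $tt'$ by edge-transitivity, note the resulting element setwise stabilises $\{1,b\}$, and conclude it equals $b$ via Corollary~\ref{Edge stabiliser}. Your conjugation step $g'=hgh^{-1}$ just makes explicit the transport that the paper leaves implicit, and it checks out (even if $h$ swaps the roles of $s,s'$, the computation still yields $g'V_t=V_{t'}$).
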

	
	\noindent
	
	\begin{lemma}\label{Type I Bag}
		Every part of $\mathcal V$ induces a finite cycle.
	\end{lemma}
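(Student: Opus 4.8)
The plan is to establish, in order, that $\Gamma[V_t]$ is $2$-regular, that it is connected, and hence that it is a finite cycle or a double ray; the real work then reduces to excluding the double ray by exploiting $2$-connectivity.

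First I would read off the degree of a vertex inside its part directly from the {\sc{Type I}} picture. By Corollary~\ref{Neighbourhood Lemma} a vertex $u$ lies in exactly two parts $V_t,V_{t'}$, its adhesion set is $V_t\cap V_{t'}=\{u,u''\}$, and $N_\Gamma(u)\subseteq V_t\cup V_{t'}$. In a {\sc{Type I}} separation the two vertices of each adhesion set are joined by the edge labelled $b$, and each of them has exactly one further neighbour on each of the two sides of the separation. Hence the three neighbours of $u$ are $u''$, one vertex of $V_t\setminus V_{t'}$, and one vertex of $V_{t'}\setminus V_t$, so $u$ has exactly two neighbours inside $V_t$. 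Since all adhesion sets lie in a single $G$-orbit this holds for every vertex, and $\Gamma[V_t]$ is $2$-regular.

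Next I would settle connectedness. As $b$ is an involution, the $b$-edges form a perfect matching, so each vertex of $V_t$ meets exactly one $b$-edge, namely the one to its adhesion partner. In particular the two vertices of every adhesion set are already adjacent, so the torso of $V_t$ coincides with $\Gamma[V_t]$ and every adhesion set induces a single edge. Lemma~\ref{Torso Lemma} then yields that $\Gamma[V_t]$ is connected, so being $2$-regular it is either a finite cycle or a double ray.

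This final dichotomy is where the genuine work lies. Suppose for contradiction that $\Gamma[V_t]$ is a double ray $R=(\dots,v_{-1},v_0,v_1,v_2,\dots)$. Since each vertex meets exactly one $b$-edge and one non-$b$ edge inside $V_t$, the edges of $R$ alternate between the $b$-edges, which are precisely the adhesion sets of $t$, and the remaining side edges; I may thus take $\{v_0,v_1\}=V_t\cap V_{t_0}$ to be an adhesion set with $v_0v_{-1}$ and $v_1v_2$ side edges. Every branch hanging off $t$ meets $V_t$ only in its adhesion set, and every adhesion set of $t$ other than $\{v_0,v_1\}$ has both endpoints on a single tail of $R$; consequently the tail $(v_2,v_3,\dots)$ together with its attached branches is a component of $\Gamma\setminus\{v_0,v_1\}$ whose only neighbour in $\{v_0,v_1\}$ is $v_1$, because the three neighbours of $v_0$ are $v_1$, $v_{-1}$ and its neighbour in $V_{t_0}\setminus V_t$. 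Then deleting $v_1$ alone would disconnect this tail from $v_0$, making $v_1$ a cut vertex and contradicting that $\Gamma$ is $2$-connected. I expect this exclusion of the double ray to be the only delicate point, the preceding steps being a direct reading of the {\sc{Type I}} configuration through Corollary~\ref{Neighbourhood Lemma} and the torso lemma; note that the finite cycle escapes the contradiction precisely because there the complement of an adhesion set is a single path meeting both separator vertices. Therefore $\Gamma[V_t]$ is a finite cycle.
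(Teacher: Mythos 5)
Your proof is correct and follows essentially the same route as the paper: $2$-regularity via Corollary~\ref{Neighbourhood Lemma}, connectedness via Lemma~\ref{Torso Lemma} (since every adhesion set induces a $b$-edge), and then the cycle/double-ray dichotomy with the double ray excluded because it forces a cut vertex, contradicting $2$-connectivity. Your only addition is to spell out the cut-vertex observation that the paper states without detail, and that elaboration is accurate.
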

	
	\begin{proof}
		Let $t\in V(T)$. Since every adhesion set induces a connected subgraph, we conclude by Lemma~\ref{Torso Lemma} that $\Gamma[V_t]$ is connected. Moreover, Corollary~\ref{Neighbourhood Lemma} implies that $\Gamma[V_t]$ is $2$-regular. It follows that $\Gamma[V_t]$ is either a finite cycle or a double ray. Recall that by Lemma~\ref{Disjointness Lemma} all adhesion sets are disjoint. The conclusion follows by observing that every vertex of $V_t$ is a cut vertex when $V_t$ induces a double ray and hence, the graph $\Gamma$ is not $2$-connected.
	\end{proof}
	
	It will be clear by Lemma~\ref{planarity} that we will obtain planar Cayley graphs in all subcases. 
	
	\subsection{Two Generators}
	
	Assume that $G=\langle {\color{red}a},{\color{blue}b}\rangle$, where $b$ is an involution.
	We distinguish the following cases depending on the colors of the edges incident to the adhesion sets, depicted as Figure \ref{Type I Cases}.

	\begin{figure}[ht]
		\centering
		\begin{tikzpicture}
			\begin{scope}[decoration={
					markings,
					mark=at position 0.55 with {\arrow{>}}}
				] 
				
				\draw (-4.5,0.5) node {$V_t$};
				\draw (-4.5,-0.5) node {$V_{t'}$};
				
				\draw (0,0.5) node {$V_t$};
				\draw (0,-0.5) node {$V_{t'}$};
				
				\draw[blue,thick,] (2,0)--(1,0) ;
				\draw[red,thick,postaction={decorate}] (2,1)--(2,0) ;
				\draw[red,thick,postaction={decorate}] (1,1)--(1,0) ;
				\draw[red,thick,postaction={decorate}] (2,0)--(2,-1) ;
				\draw[red,thick,postaction={decorate}] (1,0)--(1,-1) ;
				
				\draw (2.7,-1.2) arc(0:180:1.2cm and 1.6cm);
				\draw (2.7,1.2) arc(0:-180:1.2cm and 1.6cm);		
				\draw (2,0) node [circle,fill, inner sep=2pt] {};
				\draw (1,0) node [circle,fill, inner sep=2pt] {};
				\draw (2,-1) node [circle,fill, inner sep=2pt] {};
				\draw (2,1) node [circle,fill, inner sep=2pt] {};
				\draw (1,1) node [circle,fill, inner sep=2pt] {};
				\draw (1,-1) node [circle,fill, inner sep=2pt] {};
				\draw[blue,thick,] (-2.5,0)--(-3.5,0) ;
				\draw[red,thick,postaction={decorate}] (-2.5,0)--(-2.5,1) ;
				\draw[red,thick,postaction={decorate}] (-3.5,1)--(-3.5,0) ;
				\draw[red,thick,postaction={decorate}] (-2.5,-1)--(-2.5,0) ;
				\draw[red,thick,postaction={decorate}] (-3.5,0)--(-3.5,-1) ;
				
				\draw (-1.8,-1.2) arc(0:180:1.2cm and 1.6cm);
				\draw (-1.8,1.2) arc(0:-180:1.2cm and 1.6cm);
				
				\node[draw] at (-3,-2) {Case I};
				\node[draw] at (1.5,-2) {Case II};
				\draw (-2.5,0) node [circle,fill, inner sep=2pt] {};
				\draw (-3.5,0) node [circle,fill, inner sep=2pt] {};
				\draw (-2.5,-1) node [circle,fill, inner sep=2pt] {};
				\draw (-2.5,1) node [circle,fill, inner sep=2pt] {};
				\draw (-3.5,1) node [circle,fill, inner sep=2pt] {};
				\draw (-3.5,-1) node [circle,fill, inner sep=2pt] {};

			\end{scope}
			
		\end{tikzpicture}
		\caption{Cases of {\sc{Type I}} with two generators}\label{Type I Cases}
	\end{figure}
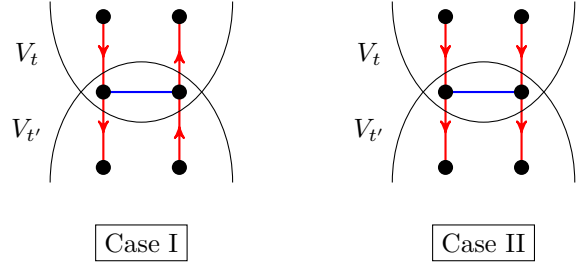        
	
	\subsubsection{Case I}
	
	Suppose that the edges incident to each adhesion set inducing a separation in $\mathcal N$ are as in Case I of Figure~\ref{Type I Cases}. Observe that  $\{a^{-1},ba\}\subseteq V_t$ and  $\{a,ba^{-1}\}\subseteq V_{t'}$ are the neighbors of $1$ and $b$ in $V_t$ and $V_{t'}$, respectively. Since $b\{a^{-1},ba\}=\{a,ba^{-1}\}$, it must be that $bV_t=V_{t'}$ and $bV_{t'}=V_t$. Lemma~\ref{Inversion Type I} implies that $G$ acts on $E(T)$ with inversion (and hence transitively on V(T)).
	
	By Lemma~\ref{Type I Bag}, there is an $n\in \N$ such that $(ba)^n=1$ and $$V_t=\{1,b,ba,\ldots,(ba)^{n-1}b=a^{-1}\}.$$ This gives  a partition $\langle ba\rangle \sqcup \langle ba\rangle b$ of $V_t$. We next conclude that $\st_G(V_t)\subseteq V_t$ by noting that $1\in V_t$.
	Clearly, we have $\langle ba \rangle \subseteq \st_G(V_t)$.
	Moreover, for the element $ba\in V_t$, we observe that 
	
	\[
	(ba)^ib(ba)=(ba)^ia \not\in V_t.
	\]
	Since $V_t=\langle ba\rangle \sqcup \langle ba\rangle b$ , we conclude that $\st_G(V_t)=\langle ba \rangle \cong \mathbb Z_n$. 
	Moreover, $ \st_G(V_t) \cap \st_G(V_t \cap V_{t'})=\langle ba \rangle \cap \langle b \rangle = 1$.
	
	We apply Lemma~\ref{subdivided bass-serre} and obtain that
	
	\[
	G\cong\mathbb Z_n \ast \mathbb Z_2.
	\]
	
	\subsubsection{Case II}
	
	By the structure of the neighbourhood of $\{1,b\}$ and Lemma~\ref{Inversion Type I} we see that $b$ cannot invert $V_t$ and $V_{t'}$, hence $G$ acts on $(T,\mathcal V)$ without inversion.
	
	Now, consider the adhesion set $a^{-1}\{1,b\}=(a^{-1}V_t) \cap (a^{-1}V_{t'})$. From the fact that $a^{-1}\{1,b\} \subseteq V_{t}$ we deduce that either $a^{-1}V_t=V_t$ or $a^{-1}V_{t'}=V_t$. 
	Since the adhesion set $\{1,b\}$ has ingoing $a$-edges but $a\{1,b\}$ has outgoing $a$-edges in $V_t$, we cannot have that $a^{-1}V_t=V_t$. Consequently, it must be that $a^{-1}V_{t'}=V_t$. The fact that two adjacent parts lie in the same orbit under the action of $G$ implies that $G$ acts transitively on $\mathcal V$ (and $V(T))$.
	
	By Lemma~\ref{Type I Bag}, there is in this case an $n\in \N$ such that $(ba^{-1}ba)^n=1$ and
	\[
	V_t=\{1,b,ba^{-1},ba^{-1}b,\ldots,(ba^{-1}ba)^{n-1}ba^{-1}b=a^{-1}\}.
	\]
	In other words, $\langle ba^{-1}ba\rangle \sqcup \langle ba^{-1}ba\rangle b\sqcup \langle ba^{-1}ba\rangle ba^{-1}\sqcup \langle ba^{-1}ba\rangle ba^{-1}b$ forms a partition of $V_t$. 
	Notice that $\langle ba^{-1}ba \rangle$ is the trivial group when $ba^{-1}ba=1$. As before, since $1\in V_t$ we infer that $\st_G(V_t) \subseteq V_t$.
	Clearly, we have $\langle ba^{-1}ba \rangle \subseteq \st_G(V_t)$. 
	Moreover, we see that $\langle ba^{-1}ba \rangle ba^{-1} \not\subseteq \st_G(V_t)$ because we have $(ba^{-1}ba)^i ba^{-1}(ba^{-1}ba)\not\in V_t$ and that $\langle ba^{-1}ba \rangle ba^{-1}a \not\subseteq \st_G(V_t)$ because $(ba^{-1}ba)^i ba^{-1}b(a^{-1}ba) \not\in V_t$. 
	
	Lastly, observe that since $b$ is an involution and all adhesion sets induce a $b$-edge, we have that the action of $b$ on $\Gamma$ fixes every adhesion set. 
	Hence, we have that $b\in \st_G(V_t)$. 
	It follows that $\langle ba^{-1}ba, b \rangle \subseteq \st_G(V_t)$. 
	Therefore, we conclude that 
	$$\st_G(V_t)=\langle ba^{-1}ba, b \mid b^2,(ba^{-1}ba)^n,(a^{-1}ba)^2\rangle \cong D_{2n}.$$
	By Lemma~\ref{bass-serre}, we have that
	
	\[
	G\cong\ D_{2n} \underset{\mathbb Z_2}{\ast}(t).
	\]
	
	\noindent
	We collect both cases in the following theorem.
	
	\begin{thm}\label{Type I two gen}
		If $(T,\mathcal V)$ is of {\sc{Type I}}  with two generators, then $G$ satisfies one of the following cases:
		\begin{enumerate}[label=\rm(\roman*)]
			\item $	G\cong\mathbb Z_n \ast \mathbb Z_2$.
			\item $G\cong D_{2n} \underset{\mathbb Z_2}{\ast}(t)$.\qed
		\end{enumerate}
	\end{thm}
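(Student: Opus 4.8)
The plan is to exploit the fact, established in Lemma~\ref{Type I Bag}, that every part of $\mathcal V$ induces a finite cycle, so that $\Gamma$ is obtained by gluing these cycles together along the $b$-edges sitting inside the adhesion sets, in the tree pattern dictated by $T$. Fixing the adhesion set $V_t\cap V_{t'}=\{1,b\}$, the whole analysis reduces to understanding the setwise stabilizer $\st_G(V_t)$ of a single part and to deciding whether $G$ acts with or without inversion; Bass--Serre theory (Lemma~\ref{bass-serre} or~\ref{subdivided bass-serre}) then reads off the group. Since $G=\langle a,b\rangle$ with $b$ an involution that fixes each adhesion set, the only remaining freedom is the orientation of the two $a$-edges leaving the separator $\{1,b\}$ into $V_t$, which splits the argument into the two cases of Figure~\ref{Type I Cases}.

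First I would treat Case~I, where the orientations of the $a$-edges at the separator (as in Case~I of Figure~\ref{Type I Cases}) make the neighbours of $1$ and $b$ inside $V_t$ equal to $\{a^{-1},ba\}$. A direct computation using $b^2=1$ shows $b\{a^{-1},ba\}=\{a,ba^{-1}\}$, i.e. $b$ swaps $V_t$ and $V_{t'}$; by Lemma~\ref{Inversion Type I} this means $G$ acts with inversion. Using Lemma~\ref{Type I Bag} I would write the cycle $V_t$ explicitly as $\langle ba\rangle\sqcup\langle ba\rangle b$ with $(ba)^n=1$, and then identify $\st_G(V_t)$. The containment $\langle ba\rangle\subseteq\st_G(V_t)$ is immediate, and since $1\in V_t$ forces $\st_G(V_t)\subseteq V_t$, it suffices to rule out the coset $\langle ba\rangle b$, which follows by checking that $(ba)^i b$ moves a neighbour of the identity out of $V_t$. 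Hence $\st_G(V_t)=\langle ba\rangle\cong\mathbb Z_n$, its intersection with $\st_G(V_t\cap V_{t'})=\langle b\rangle$ is trivial, and Lemma~\ref{subdivided bass-serre} yields $G\cong\mathbb Z_n\ast\mathbb Z_2$.

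For Case~II I would first observe from the edge orientations and Lemma~\ref{Inversion Type I} that $b$ cannot invert the two parts, so $G$ acts without inversion; translating the adhesion set by $a^{-1}$ and tracking the in/out orientation of the $a$-edges shows $a^{-1}V_{t'}=V_t$, so the two orbits of parts collapse and $G$ acts transitively on $V(T)$. The part is now the longer cycle $\langle ba^{-1}ba\rangle\sqcup\langle ba^{-1}ba\rangle b\sqcup\langle ba^{-1}ba\rangle ba^{-1}\sqcup\langle ba^{-1}ba\rangle ba^{-1}b$ with $(ba^{-1}ba)^n=1$. The key point is again the computation of $\st_G(V_t)$: beyond the obvious $\langle ba^{-1}ba\rangle$ one must add $b$ itself (which fixes every adhesion set) while excluding the two remaining cosets, giving $\st_G(V_t)=\langle ba^{-1}ba,b\rangle\cong D_{2n}$; Lemma~\ref{bass-serre} then produces the HNN-extension $G\cong D_{2n}\underset{\mathbb Z_2}{\ast}(t)$.

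The main obstacle I anticipate is pinning down the setwise stabilizer $\st_G(V_t)$ exactly rather than merely as a subgroup of it: the easy inclusions come from the cyclic, respectively dihedral, symmetries of the cycle, but the reverse inclusions require verifying that every coset representative outside the claimed stabilizer sends some vertex (conveniently, a neighbour of $1$) out of $V_t$, a computation that hinges delicately on $b^2=1$ and the precise edge orientations of each case. Once the stabilizers and the inversion/non-inversion dichotomy are settled, the passage to the free product with amalgamation in Case~I and to the HNN-extension in Case~II is a routine application of the Bass--Serre lemmas, and collecting the two outcomes gives the theorem.
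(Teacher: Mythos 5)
Your proposal is correct and takes essentially the same route as the paper's proof: the same case split according to Figure~\ref{Type I Cases}, the same inversion/non-inversion dichotomy via Lemma~\ref{Inversion Type I}, the same coset decompositions of $V_t$ from Lemma~\ref{Type I Bag} with the identical stabilizer computations ($\st_G(V_t)\cong\mathbb Z_n$ in Case~I, $\st_G(V_t)=\langle ba^{-1}ba,b\rangle\cong D_{2n}$ in Case~II), and the same concluding applications of Lemmas~\ref{subdivided bass-serre} and~\ref{bass-serre}. The coset-exclusion checks you flag as the delicate step are exactly the one-line computations the paper performs (e.g.\ $(ba)^ib(ba)=(ba)^ia\notin V_t$), so nothing is missing.
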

	
	The definitions of a free product with amalgamation, an {\rm HNN}-extension and the proof of Theorem~\ref{Type I two gen} immediately imply:
	
	\begin{coro}{\rm\cite[Theorem 1.1]{Ageloscubic}}
		If $(T,\mathcal V)$ is of {\sc{Type I}} with two generators, then $G$ has one of the following presentations:
		\begin{enumerate}[label=\rm(\roman*)]
			\item $G=\langle a,b\mid b^2,(ba)^n\rangle$.
			\item $G=\langle a,b\mid b^2,(ba^{-1}ba)^n\rangle$.\qed
		\end{enumerate}
	\end{coro}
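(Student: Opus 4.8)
The plan is to read off each presentation directly from the group structures established in the proof of Theorem~\ref{Type I two gen}, substituting the explicit generators found there into the definitions of a free product with amalgamation and of an HNN-extension, and then performing Tietze transformations to re-express everything in the original generating set $\{a,b\}$. Nothing new about the action on $(T,\mathcal V)$ is needed; the content is purely a simplification of the Bass--Serre presentations.

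For part (i), recall that Case~I produced $G\cong\mathbb Z_n\ast\mathbb Z_2$ via Lemma~\ref{subdivided bass-serre}, with $\st_G(V_t)=\langle ba\rangle\cong\mathbb Z_n$ and $\st_G(V_t\cap V_{t'})=\langle b\rangle\cong\mathbb Z_2$ amalgamated over their trivial intersection $\langle ba\rangle\cap\langle b\rangle=1$. I would write $\mathbb Z_n=\langle c\mid c^n\rangle$ and $\mathbb Z_2=\langle b\mid b^2\rangle$, so that the definition of the free product (trivial amalgamated subgroup, hence no linking relations) gives $\langle c,b\mid c^n,b^2\rangle$. Setting $c=ba$ and noting $bc=b^2a=a$, the sets $\{c,b\}$ and $\{a,b\}$ generate the same group; eliminating $c$ yields $\langle a,b\mid b^2,(ba)^n\rangle$.

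For part (ii), Case~II gave $G\cong D_{2n}\underset{\mathbb Z_2}{\ast}(t)$ with $\st_G(V_t)=\langle ba^{-1}ba,b\mid b^2,(ba^{-1}ba)^n,(a^{-1}ba)^2\rangle\cong D_{2n}$, edge group $\langle b\rangle$, and (from $a^{-1}V_{t'}=V_t$) stable letter $t=a^{-1}$. Substituting into the HNN definition, I would then show that two relations are redundant. First, the dihedral relation $(a^{-1}ba)^2$ follows from $b^2$ alone, since $(a^{-1}ba)^2=a^{-1}b(aa^{-1})ba=a^{-1}b^2a$; this is exactly why it disappears. Second, the two embeddings of the edge group $\langle b\rangle$ into $D_{2n}$ are the identity inclusion $b\mapsto b$ and the conjugate inclusion $b\mapsto a^{-1}ba$ (the stabiliser of the adhesion set $a^{-1}\{1,b\}$), so the stable-letter relation $tbt^{-1}\phi(b)^{-1}$ reads, with $t=a^{-1}$, simply $a^{-1}ba=a^{-1}ba$ — a tautology. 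Eliminating the redundant generator $ba^{-1}ba$ (already a word in $a,b$) and these two redundant relations leaves only $b^2$ and $(ba^{-1}ba)^n$, giving $\langle a,b\mid b^2,(ba^{-1}ba)^n\rangle$.

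The only genuinely delicate point, and the step I would verify most carefully, is this identification of the stable letter together with the two embeddings of $\langle b\rangle$ into $D_{2n}$: it is precisely because $t$ coincides up to inversion with the original generator $a$, rather than being a truly new letter, that the stable-letter relation collapses to a tautology and the Bass--Serre presentation shrinks to the claimed two-relator form. Everything else is routine substitution and Tietze simplification, so I expect the write-up to be short, matching the ``immediately imply'' remark preceding the statement.
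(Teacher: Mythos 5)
Your proposal is correct and takes essentially the same route as the paper, whose ``proof'' is exactly the remark that the presentations follow by unwinding the amalgam and HNN presentations from the proof of Theorem~\ref{Type I two gen} via the Tietze eliminations you carry out. One small bookkeeping caveat for (ii): the stable-letter relation is not literally a tautology, since $\phi(b)$ must be written in the vertex-group generators, so with $t=a^{-1}$ it reads $a^{-1}ba=b\cdot(ba^{-1}ba)$, and this is precisely the relation consumed when eliminating the generator $ba^{-1}ba$ --- you may discard it as trivial or use it for the elimination, but not both; the net presentation you arrive at is nonetheless the correct one.
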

	
	\subsection{Three Generators}
	
	Let $G=\langle {\color{red}a},{\color{blue}b},{\color{green}c} \rangle$, where $a,b$ and $c$ are involutions.
	Suppose that the edges induced by the adhesion sets with corresponding separations in $\mathcal N$ are colored with $b$. 
	Up to rearranging $a,b,c$, there are two cases for the local structure of the adhesion sets with separations in $\mathcal N$, as in the following figure:
	\begin{figure}[ht]
		\centering
		\begin{tikzpicture}
			\begin{scope}[decoration={
					markings,
					mark=at position 0.55 with {\arrow{>}}}
				] 
				
				\draw (-4.5,0.5) node {$V_t$};
				\draw (-4.5,-0.5) node {$V_{t'}$};
				
				\draw (0,0.5) node {$V_t$};
				\draw (0,-0.5) node {$V_{t'}$};
				
				\draw[blue,thick] (2,0)--(1,0) ;
				\draw[green,thick] (2,0)--(2,1) ;
				\draw[red,thick,] (1,0)--(1,1) ;
				\draw[red,thick,] (2,0)--(2,-1) ;
				\draw[green,thick,] (1,0)--(1,-1) ;
				
				\draw (2,0) node [circle,fill, inner sep=2pt] {};
				\draw (1,0) node [circle,fill, inner sep=2pt] {};
				\draw (2,-1) node [circle,fill, inner sep=2pt] {};
				\draw (2,1) node [circle,fill, inner sep=2pt] {};
				\draw (1,1) node [circle,fill, inner sep=2pt] {};
				\draw (1,-1) node [circle,fill, inner sep=2pt] {};
				
				\draw (2.7,-1.2) arc(0:180:1.2cm and 1.6cm);
				\draw (2.7,1.2) arc(0:-180:1.2cm and 1.6cm);

				\draw[blue,thick,] (-2.5,0)--(-3.5,0) ;
				\draw[red,thick, ](-2.5,0)--(-2.5,1) ;
				\draw[red,thick,] (-3.5,0)--(-3.5,1) ;
				\draw[green,thick,] (-2.5,0)--(-2.5,-1) ;
				\draw[green,thick,] (-3.5,0)--(-3.5,-1) ;
				
				\draw (-1.8,-1.2) arc(0:180:1.2cm and 1.6cm);
				\draw (-1.8,1.2) arc(0:-180:1.2cm and 1.6cm);

				\draw (-2.5,0) node [circle,fill, inner sep=2pt] {};
				\draw (-3.5,0) node [circle,fill, inner sep=2pt] {};
				\draw (-2.5,-1) node [circle,fill, inner sep=2pt] {};
				\draw (-2.5,1) node [circle,fill, inner sep=2pt] {};
				\draw (-3.5,1) node [circle,fill, inner sep=2pt] {};
				\draw (-3.5,-1) node [circle,fill, inner sep=2pt] {};
				
				\node[draw] at (-3,-2) {Case I};
				\node[draw] at (1.5,-2) {Case II};

			\end{scope}
			
		\end{tikzpicture}
		\caption{Cases of {\sc{Type I}}  with three generators}\label{TypeI3Cases}
	\end{figure}
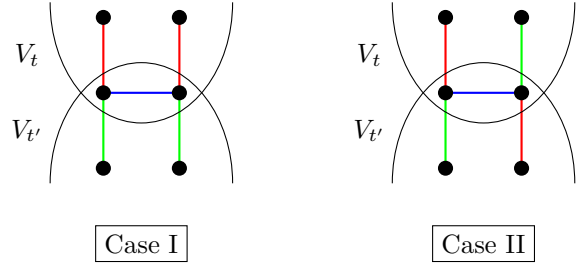   
	
	\subsubsection{Case I}
	
	First, we observe by Lemma~\ref{Inversion Type I} that $G$ acts on $T$ without inversion, since by the structure of the neighbourhood of $\{1,b\}$ we see that $b$ must stabilize both $V_t$ and $V_{t'}$. Consequently, $G$ must act with two orbits $O_1, O_2$ on $\Gamma[\mathcal V]$, where the parts in $O_1$ contain the $a$-edges and the parts in $O_2$ contain the $c$-edges.
	By Lemma~\ref{Type I Bag}  we deduce that there exist $n,m$ with $(ba)^n=1$ and $(bc)^m=1$ and so $V_t=\langle ba\rangle \sqcup \langle ba\rangle b $ and $V_{t'}=\langle bc\rangle \sqcup \langle bc\rangle b $

	To compute the stabilizers of the parts, observe that we can escape a part in $O_1$ only with $c$-edges. 
	Hence, we have $\st_G(V_t)=V_t= \langle  ba,b\mid b^2,(ba)^n,a^2\rangle \cong D_{2n}$ and similarly 
	$\st_G(V_{t'})=V_{t'} =\langle  bc,b\mid b^2,(bc)^m,c^2\rangle \cong D_{2m}$.
	Therefore, by Lemma~\ref{bass-serre} we obtain
	
	\[
	G \cong D_{2n} \underset{\mathbb Z_2}{\ast} D_{2m}.
	\]
	
	\subsubsection{Case II}
	
	In this case, we see that $b$ inverts $V_t$ and $V_{t'}$, so $G$ acts on $T$ with inversion by Lemma~\ref{Inversion Type I}. Hence, $G$ also acts transitively on $V(T)$.
	
	Let $x:=bcba$.
	By Lemma~\ref{Type I Bag} we see that $(bcba)^n=1$ and that $\langle x\rangle \sqcup \langle x\rangle b\sqcup \langle x\rangle bc\sqcup \langle x\rangle bcb$ is a partition of $V_t$. Clearly, we have that $\langle bcba \rangle \subseteq \st_G(V_t)$. We show that we actually have equality:
	\begin{itemize}
		\item $x^ib\cdot bc=x^ic \not\in V_t$, hence $\langle x\rangle b\not\in \st_G(V_t)$,
		\item $x^ibc \cdot a \not\in V_t$, hence $\langle x\rangle bc\not\in \st_G(V_t)$,
		\item $x^ibcb \cdot c \not\in V_t$, hence $\langle x\rangle bcb\not\in \st_G(V_t)$.
	\end{itemize}
	
	We conclude that $\st_G(t) = \langle bcba \rangle \cong \mathbb Z_n$ and consequently we also have that $\st_G(V_t) \cap \st_G(V_t \cap V_{t'})=\langle bcba \rangle \cap \langle b \rangle=1$. It follows from Lemma~\ref{subdivided bass-serre} that
	\[
	G\cong\mathbb Z_n \ast \mathbb Z_2.
	\]
	
	\noindent 
	In conclusion, we have proved:
	
	\begin{thm}\label{Type I three gen}
		If $(T,\mathcal V)$ is of {\sc{Type I}}  with three generators, then $G$ satisfies one of the following cases:
		\begin{enumerate}[label=\rm(\roman*)]
			\item $G \cong D_{2n} \underset{\mathbb Z_2}{\ast} D_{2m}$.
			\item $G\cong\mathbb Z_n \ast \mathbb Z_2$.\qed
		\end{enumerate}
	\end{thm}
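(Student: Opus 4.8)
The statement merely collects the outcomes of the two admissible configurations, so the plan is to argue that these configurations are exhaustive and then read off the group in each. First I would record the local structure forced by {\sc{Type I}}: by Lemma~\ref{Type I Bag} every part is a finite cycle whose edges alternate between the involution $b$ (labelling every adhesion set) and a non-$b$ colour, and by Corollary~\ref{Neighbourhood Lemma} each vertex lies in exactly two parts. Since $a$, $b$ and $c$ are involutions, every vertex $u$ of an adhesion $b$-edge carries exactly one $a$-edge and one $c$-edge besides the shared $b$-edge, so the only freedom is whether the $a$-edge of $u$ (and that of its partner) points into $V_t$ or into $V_{t'}$. Up to relabelling $a,c$ and swapping the two sides, this leaves precisely the two pictures of Figure~\ref{TypeI3Cases}: the ``parallel'' Case~I, in which the two parts use colours $\{a,b\}$ and $\{b,c\}$ respectively, and the ``crossed'' Case~II, in which each part is a cycle alternating $b$ with $a$ and $c$.

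In each case the engine is the same: decide whether $G$ acts with inversion via Lemma~\ref{Inversion Type I}, compute the setwise stabiliser of a part, and feed this into Bass--Serre theory. In Case~I the involution $b$ fixes each of $V_t,V_{t'}$, so $G$ acts without inversion and with two vertex orbits on $T$; writing the cycle $V_t$ as $\langle ba\rangle\sqcup\langle ba\rangle b$ one identifies $\st_G(V_t)=\langle ba,b\rangle\cong D_{2n}$ and, symmetrically, $\st_G(V_{t'})\cong D_{2m}$, while Corollary~\ref{Edge stabiliser} gives edge stabiliser $\langle b\rangle\cong\mathbb Z_2$. Lemma~\ref{bass-serre} then yields $G\cong D_{2n}\underset{\mathbb Z_2}{\ast} D_{2m}$, which is item (i). In Case~II the element $b$ inverts $V_t$ and $V_{t'}$, so $G$ acts with inversion and hence transitively on $V(T)$; setting $x:=bcba$ one writes $V_t=\langle x\rangle\sqcup\langle x\rangle b\sqcup\langle x\rangle bc\sqcup\langle x\rangle bcb$ and checks $\st_G(V_t)=\langle x\rangle\cong\mathbb Z_n$ with $\st_G(V_t)\cap\langle b\rangle=1$, so Lemma~\ref{subdivided bass-serre} yields $G\cong\mathbb Z_n\ast\mathbb Z_2$, which is item (ii).

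The routine but delicate part, and the step I expect to be the main obstacle, is the stabiliser computation: one must verify that the coset representatives other than the obvious cyclic ones fail to stabilise $V_t$, for instance that right-multiplying an element of $\langle x\rangle b$, $\langle x\rangle bc$ or $\langle x\rangle bcb$ by the appropriate generator lands outside $V_t$ (as in $x^i b\cdot bc=x^i c\notin V_t$). This, together with confirming that the case split is exhaustive, is where the content lies; once both part stabilisers and the edge stabiliser $\mathbb Z_2$ are pinned down, the two isomorphism types follow immediately from Lemmas~\ref{bass-serre} and~\ref{subdivided bass-serre}.
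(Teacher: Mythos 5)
Your proposal is correct and follows essentially the same route as the paper: the same two-case split from Figure~\ref{TypeI3Cases}, the same inversion criterion (Lemma~\ref{Inversion Type I}) and cycle structure (Lemma~\ref{Type I Bag}), the identical coset decompositions $V_t=\langle ba\rangle\sqcup\langle ba\rangle b$ and $V_t=\langle x\rangle\sqcup\langle x\rangle b\sqcup\langle x\rangle bc\sqcup\langle x\rangle bcb$ with $x=bcba$, and the same stabiliser computations fed into Lemmas~\ref{bass-serre} and~\ref{subdivided bass-serre}. The verification steps you single out as the delicate part (e.g.\ $x^ib\cdot bc=x^ic\notin V_t$, together with noting $\st_G(V_t)\subseteq V_t$ because $1\in V_t$) are exactly the ones the paper carries out.
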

	
	\noindent 
	
	\begin{coro}{\rm\cite[Theorem 1.1]{Ageloscubic}}
		If $(T,\mathcal V)$ is of {\sc{Type I}} with three generators, then $G$ has one of the following presentations:
		\begin{enumerate}[label=\rm(\roman*)]
			\item $G=\langle a,b,c\mid a^2,b^2,c^2,(ba)^n,(bc)^m\rangle$.
			\item $G=\langle a,b,c\mid a^2,b^2,c^2,(bcba)^n\rangle$. \qed
		\end{enumerate}
	\end{coro}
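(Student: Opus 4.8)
The plan is to read off both presentations from the splittings provided by Theorem~\ref{Type I three gen}, using the presentation of a free product with amalgamation recorded in the Preliminaries: one forms the union of the generators and relators of the two vertex groups and adjoins the relators identifying the incident edge groups. Since $a,b,c$ are involutions, the relators $a^2,b^2,c^2$ are common to both cases, so the real task is to determine the remaining relators, which will record that each part induces a finite cycle (Lemma~\ref{Type I Bag}).

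For case~(i) we have $G\cong D_{2n}\underset{\mathbb Z_2}{\ast}D_{2m}$ with adjacent vertex stabilisers $\st_G(V_t)=\langle ba,b\rangle\cong D_{2n}$ and $\st_G(V_{t'})=\langle bc,b\rangle\cong D_{2m}$ amalgamated along $\langle b\rangle$. I would feed the presentations $\langle a,b\mid a^2,b^2,(ba)^n\rangle$ and $\langle b,c\mid b^2,c^2,(bc)^m\rangle$ into the amalgamation construction; since both factors already share the generator $b$ that generates the amalgamated $\mathbb Z_2$, no further identifying relator is needed, and the union of relators yields $\langle a,b,c\mid a^2,b^2,c^2,(ba)^n,(bc)^m\rangle$. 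For case~(ii) the splitting is again over $\langle b\rangle\cong\mathbb Z_2$, and the only additional relator is the one closing up the cycle of a part, namely $(bcba)^n=1$, where $bcba$ generates the order-$n$ rotation of $V_t$; recording the three involutions together with this relator gives $\langle a,b,c\mid a^2,b^2,c^2,(bcba)^n\rangle$.

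The step I expect to be the crux is verifying that these relator sets are \emph{complete}, i.e.\ that $G$ is exactly the presented group and not a proper quotient. This is not forced by the abstract isomorphism type alone; rather it is delivered by the Bass--Serre dictionary applied to the action of $G$ on the tree $T$ underlying $(T,\mathcal V)$: the vertex and edge stabilisers computed while proving Theorem~\ref{Type I three gen} identify $G$ with the fundamental group of the associated graph of groups, whose canonical presentation is exactly the one listed. The computation I would re-examine most carefully is the identification of the vertex-group presentations in each case, since it is precisely this that dictates which relators beyond $a^2,b^2,c^2$ are required.
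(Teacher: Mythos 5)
Your treatment of case~(i) is correct and coincides with the paper's (implicit) argument: the Case~I stabilizer computations give $\st_G(V_t)=\langle ba,b\mid b^2,(ba)^n,a^2\rangle\cong D_{2n}$ and $\st_G(V_{t'})=\langle bc,b\mid b^2,(bc)^m,c^2\rangle\cong D_{2m}$, the amalgamated $\mathbb Z_2$ is generated by the shared letter $b$ in both factors, and the amalgam recipe from the Preliminaries yields $\langle a,b,c\mid a^2,b^2,c^2,(ba)^n,(bc)^m\rangle$ with no additional identifying relators. Nothing to object to there.

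Case~(ii) is where your proposal has a genuine gap, and it sits exactly at the step you yourself flagged as the crux. A first slip: the splitting there is not ``again over $\langle b\rangle\cong\mathbb Z_2$''; since $\st_G(V_t)\cap\st_G(V_t\cap V_{t'})=1$, Lemma~\ref{subdivided bass-serre} produces a genuine free product. More seriously, the Bass--Serre dictionary applied to the splitting asserted in Theorem~\ref{Type I three gen}(ii) does \emph{not} deliver the listed presentation: the canonical presentation of $\langle bcba\rangle\ast\langle b\rangle\cong\mathbb Z_n\ast\mathbb Z_2$ is $\langle x,b\mid x^n,b^2\rangle$ on two generators, and to convert it into a presentation on $a,b,c$ you would have to express $a$ and $c$ as words in $bcba$ and $b$. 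That is impossible: setting $P=\langle a,b,c\mid a^2,b^2,c^2,(bcba)^n\rangle$, the Tietze move $d:=bcb$ (so $c=bdb$ and $bcba=da$) rewrites $P$ as $\langle a,b,d\mid a^2,b^2,d^2,(da)^n\rangle\cong D_{2n}\ast\mathbb Z_2$; comparing abelianizations ($\mathbb Z_2^3$ for $n$ even, $\mathbb Z_2^2$ for $n$ odd, versus $\mathbb Z_n\oplus\mathbb Z_2$) shows $P\not\cong\mathbb Z_n\ast\mathbb Z_2$ for every $n$, and $\langle bcba,b\rangle$ is a \emph{proper} subgroup of $P$. So your plan, carried out honestly, terminates in a contradiction rather than a proof. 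The source of the trouble is not your recipe but the stabilizer computation you import from the paper: in its third bullet the test element $c$ does not lie in $V_t$ (the $c$-neighbour of $1$ sits in the adjacent part), so $x^ibcb\cdot c\notin V_t$ rules out nothing. In fact $a=x^{n-1}bcb\in V_t$ (it is the endpoint of the closing $a$-edge of the cycle), and $axa^{-1}=abcb=x^{-1}$ gives $aV_t=V_t$; hence $\st_G(V_t)=\langle bcba,\,a\rangle\cong D_{2n}$, the splitting is $G\cong D_{2n}\ast\mathbb Z_2$, and from \emph{this} corrected splitting your approach (together with the Tietze move $d=bcb$) does recover exactly the listed presentation, consistently with the cited Theorem~1.1 of \cite{Ageloscubic}. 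Your closing instinct --- to re-examine the identification of the vertex-group presentations --- was precisely right; following it through is what repairs the argument.
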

	
	\section{Tree-decomposition of {\sc{Type II}}}
	
	Even though at first glance there can be several cases for {\sc{Type II}} separations, we will in fact be able to quickly exclude most of them using appropriately the following lemma.
	
	\begin{lemma}\label{Two-Path Lemma}
		Let $G=\langle {\color{red}a},{\color{blue}b},{\textcolor{green}{c}}\rangle$ (with possibly $c=a^{-1}$), where $b$ is an involution and let $\{x,y\}$ be the adhesion set of a {\sc{Type II}} separation in $(T,\mathcal V)$ of $\Gamma$ as in Lemma~\ref{canonical td}. Let $v_1,v_2,v_3$ be any consecutive vertices in a shortest $(x,y)$-path $P$ with at least two edges and suppose there is $g\in G$ such that $gv_2\in \{x,y\}$. Then $gv_1$ and $gv_3$ lie in the same component of $\Gamma \setminus \{x,y\}$. 
	\end{lemma}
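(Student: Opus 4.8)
The plan is to set $gv_2=x$ (the case $gv_2=y$ being symmetric) and exploit that left translation by $g$ is a colour-preserving automorphism of $\Gamma$, hence an isometry that carries the geodesic $P$ to a geodesic $gP$ running from $gx$ to $gy$ and passing through $x=gv_2$ as an \emph{interior} vertex, with $gv_1,gv_3$ the two neighbours of $x$ on $gP$. Since $\{x,y\}$ is a {\sc Type II} separator, in the induced separation $(A,B)$ with $A\cap B=\{x,y\}$ the vertex $x$ has two neighbours on one side and exactly one on the other; as the separation is tight, its two sides $A\setminus B$ and $B\setminus A$ are the components of $\Gamma\setminus\{x,y\}$. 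If $gv_1,gv_3$ lie in the same component we are done, so I will suppose for contradiction that $(A,B)$ separates them, say $gv_1\in A\setminus B$ and $gv_3\in B\setminus A$.

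First I would trace $gP$ against the separator $\{x,y\}$. Writing $gP$ as the concatenation of $R_1$ (from $gx$ to $gv_1$), the length-two piece $gv_1,x,gv_3$, and $R_3$ (from $gv_3$ to $gy$), I note that $R_1,R_3$ avoid $x$, since $gP$ is a simple path meeting $x$ only at the image of $v_2$. The decisive dichotomy is whether $y\in V(gP)$. In the generic case $y\notin V(gP)$, and after disposing of the boundary situations $gx,gy\in\{x,y\}$ (deferred below), the subpaths $R_1,R_3$ avoid both $x$ and $y$; being connected and meeting $A\setminus B$, resp.\ $B\setminus A$, they remain in their respective components, which forces $gx\in A\setminus B$ and $gy\in B\setminus A$.

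The heart of the argument is then a nestedness obstruction. Since $\mathcal N$ is $G$-invariant, $\{gx,gy\}$ is again a separator in $\mathcal N$, so its separation must be nested with $(A,B)$. But once $gx$ and $gy$ lie strictly on opposite sides of $(A,B)$, each of the comparabilities in the definition of nestedness fails: every option either places one of $gx,gy$ on the wrong side of $(A,B)$ or forces $\{x,y\}\subseteq\{gx,gy\}$, hence $\{x,y\}=\{gx,gy\}$. The latter is impossible because $g$ would then stabilise $\{x,y\}$ setwise, so $v_2=g^{-1}x\in\{x,y\}$, contradicting that $v_2$ is interior to $P$. This rules out the generic case.

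It remains to treat $y\in V(gP)$, which I expect to be the main technical obstacle. Here both separator vertices $x,y$ lie on the geodesic $gP$, so reading off distances along $gP$ gives $d(gx,gy)=d(gx,x)+d(x,y)+d(y,gy)$ (or the same identity with $x,y$ in the opposite order). As $g$ is an isometry, $d(gx,gy)=d(x,y)$, so the identity collapses and forces $\{gx,gy\}\subseteq\{x,y\}$; exactly as above this is incompatible with $gv_2=x$ and $v_2\notin\{x,y\}$. Finally, the boundary cases in which $gx$ or $gy$ already lies in $\{x,y\}$ are dispatched directly by the pairwise disjointness of the separators in $\mathcal N$ (Lemma~\ref{Disjointness Lemma}): then $\{gx,gy\}$ shares a vertex with $\{x,y\}$ without being equal to it, a contradiction.
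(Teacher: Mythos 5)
Your proof is correct and takes essentially the same route as the paper's: translate the geodesic $P$ by $g$, use the minimality of $P$ to rule out the degenerate case where both $x$ and $y$ lie on $gP$, conclude that $gx,gy$ lie strictly in different components of $\Gamma\setminus\{x,y\}$, and derive a contradiction with the nestedness of $\mathcal N$. The only difference is organisational: you split on whether $y\in V(gP)$ and dispatch the boundary cases $gx,gy\in\{x,y\}$ explicitly via the disjointness of the adhesion sets, whereas the paper splits on whether $gx,gy$ lie in the same component and leaves those degeneracies implicit.
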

	
	\begin{proof}
		Suppose not. We observe that $gx,gy$ must then lie in different components of $\Gamma \setminus \{x,y\}$ as well: if not, then $gx,gy$ lie in the same component, say, $C$. The fact that $gv_1,gv_3$ lie in different components implies that $gP$ leaves $C$, therefore we have that both $x,y \in V(gP)$. Since $gv_2\in \{x,y\}$ is an inner vertex of $gP$, the subpath of $gP$ from $x$ to $y$ contradicts the choice of $P$.
		
		Hence, $g\{x,y\}$ is a separator where $gx,gy$ lie in different components of $\Gamma \setminus \{x,y\}$. It easily follows that $\{x,y\}$ and $\{gx,gy\}$ are not nested, a contradiction to Lemma~\ref{nested}.
	\end{proof}
	
	Now, let $W_{2n+1,2k}$, where $n\geq 1, k\geq 3, n\leq k$, denote the cubic graph obtained by the ${2k}$-cycle with vertices \{0,1,\ldots,2k-1\} along with ``chord" edges of the form  $\{2i,2i+2n+1\} \pmod{2k}$ forming a matching. Moreover, we define $V_{2n}, n\geq 2$ as the cubic graph obtained by the ${2n}$-cycle along with the ``diagonal" edges $\{i,i+n\} \pmod{2n}$. 
	We note that for $k=2n+1$ we have $W_{2n+1,4n+2}=V_{4n+2}$ (Fig.~\ref{V10}). 
	\begin{figure}[H]
		\hspace*{0.1\linewidth}
		\rule[-1cm]{0pt}{1cm}
		\begin{subfigure}[b]{0.45\textwidth} 
			\begin{tikzpicture}
				\foreach \i in {1,...,10} {
					\node[circle,fill, inner sep=2pt] (\i) at (\i*36:2) {}; 
				}

				\draw[thick] (1) -- (2) -- (3) -- (4) -- (5) -- (6) -- (7) -- (8) -- (9) -- (10) -- (1);
				
				\draw[thick] (1) -- (6);
				\draw[thick] (2) -- (7);
				\draw[thick] (3) -- (8);
				\draw[thick] (4) -- (9);
				\draw[thick] (5) -- (10);
			\end{tikzpicture}
		\end{subfigure}	
		\begin{subfigure}[b]{0.45\textwidth}
			\begin{tikzpicture}
				\foreach \i in {1,...,8} {
					\node[circle,fill, inner sep=2pt] (\i) at (\i*45:2) {}; 
				}
				\draw[thick] (1) -- (2) -- (3) -- (4) -- (5) -- (6) -- (7) -- (8)  -- (1);
				
				\draw[thick] (1) -- (6);
				\draw[thick] (2) -- (5);
				\draw[thick] (3) -- (8);
				\draw[thick] (4) -- (7);
			\end{tikzpicture}
		\end{subfigure}
		\caption{The graphs $W_{5,10}=V_{10}$ and $W_{5,8}$.} \label{V10}

	\end{figure}
	
	Lastly, let $R_{2m+1}$ be the cubic graph obtained by a double ray with vertex set $\mathbb{Z}$ (defined in the natural way) and by adding the edges of the form $\{2i,2i+2m+1\}$ (Fig.~\ref{R5}).
	\begin{figure}[H]
		\centering
		\begin{tikzpicture}
			\foreach \i in {-4,...,5}{
				\draw (\i,1) node [circle,fill, inner sep=2pt] {};
			}
			
			\draw[thick,<->,>=latex'] (-5,1)--(6,1);

			\foreach \i in {-4,-2,0}{
				\draw (\i,1) arc(180:0:2.5cm and 1.2cm);
			}
			
			\draw (2,1) arc(180:50:2.5cm and 1.2cm);
			\draw (4,1) arc(180:100:2.5cm and 1.2cm);
			
			\draw (-1,1) arc(0:130:2.5cm and 1.2cm);
			\draw (-3,1) arc(0:80:2.5cm and 1.2cm);
			
		\end{tikzpicture}
		\caption{The graph $R_5$.} \label{R5}
	\end{figure}
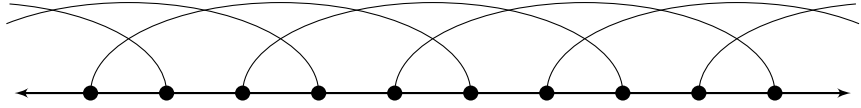
	We note that we will see in the next subsections that the tree-decomposition of $\Gamma$ obtained by Lemma~\ref{canonical td} will have two orbits of parts and that the torsos of the parts of one of the two orbits will always be isomorphic to either $W_{2n+1,2k}$, $V_{2n}$ or $R_{2n+1}$. The fact that $W_{2n+1,2k}$ is never planar, whereas $V_{2n}$ and $R_{2m+1}$ are planar if and only if $n=2$ and $m=1$, respectively, will allow us by Lemma~\ref{planarity} to determine exactly when $\Gamma$ will be planar.
	
	\subsection{Two generators}\label{two generators}
	Let $G=\langle {\color{red}a},{\color{blue}b}\rangle$, where $b$ is an involution. Let $(T,\mathcal V)$ the corresponding tree-decomposition obtained by Lemma~\ref{canonical td} and $\mathcal N$ the set of separations obtained from its adhesion sets.	Then we have the following cases for the neighborhood of such an adhesion set $\{x,y\}$:
	
	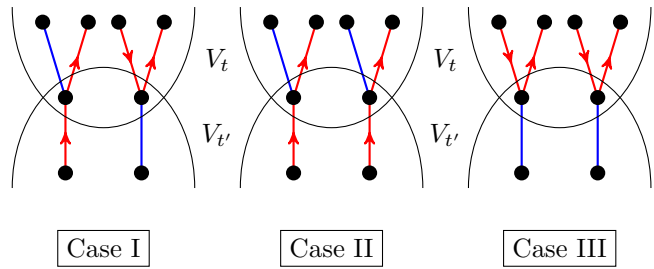
\begin{figure}[ht]
		\centering
		\begin{tikzpicture}
			\begin{scope}[decoration={
					markings,
					mark=at position 0.59 with {\arrow{>}}}
				] 
				
				\draw (1.5,0.5) node {$V_t$};
				\draw (1.5,-0.5) node {$V_{t'}$};
				
				\draw (4.5,0.5) node {$V_t$};
				\draw (4.5,-0.5) node {$V_{t'}$};
				
				\draw[blue,thick] (6.5,-1)--(6.5,0);
				\draw[blue,thick] (5.5,-1)--(5.5,0);
				
				\draw[red,thick,postaction={decorate}](5.5,0)--(5.8,1);
				\draw[red,thick,postaction={decorate}](5.2,1)--(5.5,0);
				\draw[red,thick,postaction={decorate}](6.2,1)--(6.5,0);
				\draw[red,thick,postaction={decorate}](6.5,0)--(6.8,1);
				
				\draw (7.2,-1.2) arc(0:180:1.2cm and 1.6cm);
				\draw (7.2,1.2) arc(0:-180:1.2cm and 1.6cm);
				
				\draw (6.5,0) node [circle,fill, inner sep=2pt] {};
				\draw (5.5,0) node [circle,fill, inner sep=2pt] {};
				\draw (6.2,1) node [circle,fill, inner sep=2pt] {};
				\draw (6.8,1) node [circle,fill, inner sep=2pt] {};
				\draw (5.2,1) node [circle,fill, inner sep=2pt] {};
				\draw (5.8,1) node [circle,fill, inner sep=2pt] {};
				\draw (6.5,-1) node [circle,fill, inner sep=2pt] {};
				\draw (5.5,-1) node [circle,fill, inner sep=2pt] {};
				
				
				\draw[red,thick,postaction={decorate}] (2.5,-1)--(2.5,0);
				\draw[red,thick] (3.5,-1)--(3.5,0);
				
				\draw[red,thick,postaction={decorate}] (2.5,0)--(2.8,1);
				\draw[blue,thick] (2.2,1)--(2.5,0);
				\draw[blue,thick] (3.2,1)--(3.5,0);
				\draw[red,thick](3.5,0)--(3.8,1);
				
				\draw (4.2,-1.2) arc(0:180:1.2cm and 1.6cm);
				\draw (4.2,1.2) arc(0:-180:1.2cm and 1.6cm);
				
				\draw (3.5,0) node [circle,fill, inner sep=2pt] {};
				\draw (2.5,0) node [circle,fill, inner sep=2pt] {};
				\draw (3.2,1) node [circle,fill, inner sep=2pt] {};
				\draw (3.8,1) node [circle,fill, inner sep=2pt] {};
				\draw (2.2,1) node [circle,fill, inner sep=2pt] {};
				\draw (2.8,1) node [circle,fill, inner sep=2pt] {};
				\draw (3.5,-1) node [circle,fill, inner sep=2pt] {};
				\draw (2.5,-1) node [circle,fill, inner sep=2pt] {};


				\draw[red,thick,postaction={decorate}](-0.5,-1)--(-0.5,0);
				\draw[blue,thick] (0.5,-1)--(0.5,0);
				
				\draw[red,thick,postaction={decorate}](-0.5,0)--(-0.2,1);
				\draw[blue,thick](-0.8,1)--(-0.5,0);
				\draw[red,thick,postaction={decorate}](0.2,1)--(0.5,0);
				\draw[red,thick,postaction={decorate}](0.5,0)--(0.8,1);
				
				\node[draw] at (0,-2) {Case I};
				\node[draw] at (3,-2) {Case II};
				\node[draw] at (6,-2) {Case III};

				\draw (1.2,-1.2) arc(0:180:1.2cm and 1.6cm);
				\draw (1.2,1.2) arc(0:-180:1.2cm and 1.6cm);

				\draw (0.5,0) node [circle,fill, inner sep=2pt] {};
				\draw (-0.5,0) node [circle,fill, inner sep=2pt] {};
				\draw (0.2,1) node [circle,fill, inner sep=2pt] {};
				\draw (0.8,1) node [circle,fill, inner sep=2pt] {};
				\draw (-0.8,1) node [circle,fill, inner sep=2pt] {};
				\draw (-0.2,1) node [circle,fill, inner sep=2pt] {};
				\draw (0.5,-1) node [circle,fill, inner sep=2pt] {};
				\draw (-0.5,-1) node [circle,fill, inner sep=2pt] {};
				
			\end{scope}
			
		\end{tikzpicture}
		\caption{Cases of {\sc{Type II}} with two generators.} \label{typeII2cases}
	\end{figure}
	
	\begin{lemma}\label{CaseIII}
		The adhesion sets of $(T,\mathcal V)$ satisfy Case III.
	\end{lemma}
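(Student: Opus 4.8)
The plan is to show that a separator vertex can only reach the small side of its separation through its $b$-edge: this is exactly Case~III, and it excludes Cases~I and~II, where at least one separator vertex reaches the small side through an $a$-edge. I would first fix the orbit structure of the parts. Since $b$ is an involution and the two sides of a Type~II separation are told apart by the degrees ($2$ and $1$) of the separator vertices, no $g\in G$ can carry a big side onto a small side; hence $G$ acts on $(T,\mathcal V)$ without inversion, and by Lemma~\ref{canonical td} there are at most two orbits of parts, while the two ends of each edge of $T$ (big vs.\ small) lie in different orbits, so there are exactly two, with $T$ bipartite between them. As $G$ preserves the big/small distinction, one orbit consists entirely of \emph{big} parts (each adhesion vertex has degree $2$ in it) and the other of \emph{small} parts (degree $1$). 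By Corollary~\ref{Neighbourhood Lemma} every vertex $v$ then lies in exactly one big and one small part, with degree $2$ in the former and degree $1$ in the latter, and I call its unique neighbour in its small part the \emph{small-neighbour} of $v$. In the three cases of Figure~\ref{typeII2cases} the small-neighbour of a separator vertex is reached either through its incoming $a$-edge (both parts in Case~II, one part in Case~I) or through its $b$-edge (both parts in Case~III, the other part in Case~I); thus Case~III is precisely the situation where every separator vertex uses its $b$-edge, and it suffices to exclude any vertex whose small-neighbour is reached by an $a$-edge.

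The heart of the argument is an edge-localisation contradiction. Suppose some vertex $v$ had small-neighbour $u=va^{-1}$, joined to $v$ by the $a$-edge $e=\{u,v\}$; then $e$ is the incoming $a$-edge of $v$ and the outgoing $a$-edge of $u$. Seen from $v$, the edge $e$ runs to its small side, so $e$ lies in the small part of $v$. Seen from $u$, however, $e$ is the outgoing $a$-edge of $u$, and since $u$ reaches \emph{its} small-neighbour through either an incoming $a$-edge or a $b$-edge, the outgoing $a$-edge of $u$ must enter the big part of $u$; so $e$ lies in the big part of $u$. These two parts are distinct (one small, one big) yet both contain $\{u,v\}$, forcing $\{u,v\}$ to be an adhesion set of $(T,\mathcal V)$ — impossible, since a Type~II adhesion set induces no edge between its two vertices while $u\sim v$ along $e$. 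Hence no such $v$ exists, every separator vertex reaches its small-neighbour via its $b$-edge, and the separation is of Case~III. (For the asymmetric Case~I alone one could alternatively invoke the involution $xy^{-1}$ of Lemma~\ref{Involution lemma}, which swaps $x$ and $y$ and preserves each side, to force the two separator vertices to be of the same type.)

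The step needing the most care is the orientation bookkeeping behind ``the outgoing $a$-edge of $u$ enters the big part of $u$'': this relies on the only small-side edges occurring being incoming $a$-edges and $b$-edges, i.e.\ on the configuration in which a vertex reaches the small side through its \emph{outgoing} $a$-edge having already been discarded in passing to the three cases of Figure~\ref{typeII2cases}. The other ingredient that must be set up cleanly beforehand is the big/small uniformity of the two orbits of parts, since the distinctness of the two parts containing $e$ rests on it.
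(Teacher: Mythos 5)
The decisive gap is your orbit claim: that the two endpoints of every edge of $T$ lie in different orbits, so that one orbit consists of uniformly \emph{big} parts and the other of uniformly \emph{small} parts. Absence of inversion (which you argue correctly from the $2$/$1$ degree asymmetry) only rules out an element swapping the two sides of a \emph{single} tree edge; it does not prevent $G$ from acting vertex-transitively on $T$ without inversion --- compare {\sc{Type I}}, Case~II in the paper, where exactly this HNN-type situation occurs. ``Big'' is a property of a part \emph{relative to one of its adhesion sets}, and nothing in your argument stops a part from being big for one of its adhesion sets and small for another. Worse, in the very configuration you must exclude this is forced to happen: in Case~II of Figure~\ref{typeII2cases} every vertex $v$ has small-neighbour $va^{-1}$, and for the $a$-edge $e=\{u,ua\}$, if the small part $S_{ua}$ of $ua$ and the big part $B_u$ of $u$ were distinct parts, then since each vertex lies in exactly two parts (Corollary~\ref{Neighbourhood Lemma}) both endpoints of $e$ would lie in two distinct --- hence adjacent --- parts, making $\{u,ua\}$ an adhesion set that induces an edge, which is impossible for a {\sc{Type II}} adhesion set. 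So in a hypothetical Case~II one has $B_u=S_{ua}$ for every $u$: all parts are mixed, $T$ has a single orbit of parts, and the equality your step ``these two parts are distinct (one small, one big)'' forbids is exactly what holds. Your proof is therefore circular: the big/small uniformity you set up in advance fails precisely in the case to be excluded, and there your edge-localisation contradiction evaporates, since $e$ simply lies in the one part $B_u=S_{ua}$ with no conflict.

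The deeper point is that Case~II cannot be refuted by this kind of local and orbit bookkeeping at all: as sketched above it is a perfectly coherent local picture, and the paper eliminates it only by exploiting the \emph{nestedness} of $\mathcal N$ --- Lemma~\ref{Two-Path Lemma}, which rests on Lemma~\ref{nested}, applied to a shortest path between the two separator vertices. Your proposal never invokes nestedness, which is the essential ingredient. What does survive of your argument: the parenthetical for Case~I is sound, since $xy^{-1}$ (Lemma~\ref{Involution lemma}) stabilizes the adhesion set, preserves the two sides (no inversion), and, being a left translation, preserves edge labels and orientations, so both separator vertices must reach the small side by the same type of edge; the same left-translation reasoning shows the small-side edge type is constant over \emph{all} vertices of $\Gamma$, which also handles the unpictured (incoming-$a$, outgoing-$a$) mixed configuration whose discarding worried you. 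But Case~II, where every vertex reaches its small side by its incoming $a$-edge, passes all of these local tests, and your proof does not touch it; to repair the argument you would have to replace the uniformity setup by an appeal to nestedness along the lines of the paper's Two-Path Lemma.
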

	
	\begin{proof}
		Let $\{x,y\}$ be an adhesion set. First, observe that no path in $\Gamma$ contains two consecutive $b$-edges, hence every path of length two contains at least one $a$-edge. Let $P$ be a shortest $(x,y)$-path\footnote{By Lemma~\ref{Connectivity Lemma}(i) we can see that $P$ lies completely within $V_{t}$ or $V_{t'}$, but this is irrelevant to the proof of the Lemma.}, necessarily of length at least two. 
		
		Assume that either Case I or Case II happen. Notice that --in both cases-- for every possible edge-coloring of a path of length two there exists a path $Q$ of length two whose middle vertex belongs in $\{x,y\}$ and its two endpoints lie in different components of $\Gamma \setminus \{x,y\}$ that realizes the same edge-coloring. Consider an arbitrary subpath $P'=v_1v_2v_3$ of $P$ of length two and an appropriate $Q$ as above that realizes the edge-coloring of $P'$. Let $w$ be the middle vertex of $Q$ and $g=wv_2^{-1}$. Then $gP=Q$ and $gv_1,gv_3$ lie in different components of $\Gamma \setminus \{x,y\}$, contradicting Lemma~\ref{Two-Path Lemma}.
	\end{proof}
	
	Consequently, we can assume for the rest of this subsection that only Case III happens. It follows that no part of $(T,\mathcal V)$ contains edges of all colors: otherwise, by Corollary~\ref{Neighbourhood Lemma} and the fact that no adhesion set contains both a vertex incident with $a$-edges as well as a vertex incident with $b$-edges in a part $V_t$, we see that the $a$-edges and the $b$-edges induce different connected components in the torso of $V_t$, a contradiction to the connectivity of $\Gamma$.
	
	Hence, $(T,\mathcal V)$ has two orbits of parts $O_1,O_2$, where parts in $O_1$ contain only edges colored with $a$ and parts in $O_2$ contain edges colored with $b$. Moreover, $G$ acts on $(T,\mathcal V)$ without inversion. The structure of the parts in $O_2$ is clear: their edges induce a perfect $b$-matching in the part. We are ready to obtain the full structure of the parts in $O_1$ as well.
	
	\begin{lemma}\label{a-bag}
		There is an $n\geq 2$, such that for every adhesion set $\{x,y\}$ we have $x=ya^n$ or $x=ya^{-n}$. Moreover, every part in $O_1$ induces an $a$-cycle of length $2n$.
	\end{lemma}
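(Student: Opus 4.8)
The plan is to first pin down the local $a$-structure of an $O_1$ part, then use the involution relation of Lemma~\ref{Involution lemma} to force finiteness and antipodality, and finally upgrade ``disjoint union of $a$-cycles'' to ``single $a$-cycle'' via the connectivity of the part.

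First I would record that, as established just before the statement, every part in $O_1$ carries only $a$-edges. Combining the local picture of Case~III with the transitivity of $G$ on the adhesion sets and with Corollary~\ref{Neighbourhood Lemma}, every vertex of such a part $V_t$ spends both of its $a$-edges inside $V_t$, so $\Gamma[V_t]$ is $2$-regular and therefore a disjoint union of finite $a$-cycles and $a$-double rays. After translating by a suitable group element I may assume $1\in V_t$, so that the $a$-cycle (or ray) through $1$ is $\langle a\rangle$.

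Next I would exploit Lemma~\ref{Involution lemma}. Picking an adhesion set contained in $V_t$ and translating it to the form $\{1,z\}$, I note that by Lemma~\ref{Connectivity Lemma}(i), together with the fact that the neighbouring $O_2$ part induces a $b$-matching (in which $1$ has degree one), the two elements of an adhesion set must be joined inside the $a$-part; hence $z$ lies on the $a$-cycle through $1$, i.e.\ $z=a^k$ with $0<k<\operatorname{ord}(a)$. Lemma~\ref{Involution lemma} gives $z^2=1$, hence $a^{2k}=1$. In particular $a$ has finite order $\ell$, which already rules out double rays and makes every $O_1$ part a finite union of $a$-cycles of length $\ell$. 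Since $\ell\mid 2k$ while $\ell\nmid k$ and $0<k<\ell$, I obtain $\ell=2k$; writing $n:=k$ gives $\ell=2n$ and $z=a^n$. Translating back, every adhesion set has the form $\{y,ya^{n}\}$, so $x=ya^{n}$, and as $a^{2n}=1$ this equals $ya^{-n}$, yielding the two stated forms. Finally, $a$ cannot be an involution, since otherwise $ga=ga^{-1}$ and $\Gamma$ would be $2$-regular instead of cubic; thus $\ell\ge 3$, and evenness forces $\ell\ge 4$, i.e.\ $n\ge 2$.

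It remains to show that $V_t$ is a single cycle, i.e.\ $V_t=\langle a\rangle$, and this is the step I expect to be the main obstacle, since $2$-regularity and the involution relation only deliver a disjoint union of $2n$-cycles. Here I would argue by connectivity. By Lemma~\ref{Connectivity Lemma}(ii) every vertex of $V_t$ is reachable from $1$ by a path inside $n(t)$. However, by Corollary~\ref{Neighbourhood Lemma} each vertex $a^{j}$ sends its unique $b$-edge to a vertex of an adjacent $O_2$ part in which that vertex has degree one; hence any excursion out of $\langle a\rangle$ into a neighbouring $b$-part immediately returns to $\langle a\rangle$, so the component of $1$ in $n(t)$ is exactly $\langle a\rangle$ together with these $b$-neighbours. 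A hypothetical second $a$-cycle in $V_t$ could therefore never be reached from $1$ within $n(t)$, contradicting Lemma~\ref{Connectivity Lemma}(ii). Thus $V_t=\langle a\rangle$ is a single $a$-cycle of length $2n$, and since all $O_1$ parts form one orbit under $G$, all of them have length $2n$.
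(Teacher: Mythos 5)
Your proof is correct and takes essentially the same route as the paper: both arguments rest on Lemma~\ref{Connectivity Lemma} combined with the fact that the neighbouring $O_2$ parts induce $b$-matchings (so paths cannot make progress outside the $a$-part), and on Lemma~\ref{Involution lemma} to force $a^{2n}=1$ and hence the order $2n$. The only difference is bookkeeping: the paper first establishes that $\Gamma[V_t]$ is connected and reads everything off an $(x,y)$-path inside it, whereas you derive the order of $a$ first from the adhesion set $\{1,a^k\}$ and invoke connectivity via Lemma~\ref{Connectivity Lemma}(ii) only at the end to exclude a second $a$-cycle, making explicit along the way the $n\geq 2$ justification that the paper leaves implicit.
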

	
	\begin{proof}
		Let $V_t\in O_1$ and $\{x,y\}=V_t\cap V_{t'}$ be an adhesion set of $t$. For every $s\in N_T(t)$, we have that $V_s \in O_2$ and consequently that $V_s$ induces a $b$-matching. By Lemma~\ref{Connectivity Lemma}(ii), it follows that $\Gamma[V_t]$ is connected. 
		
		Consider an $(x,y)$-path $P$ within $V_t$ and let $n\geq 2$ be its length. Hence, $x=ya^n$ or $x=ya^{-n}$. By Lemma~\ref{Involution lemma}, we have $(xy^{-1})^2=1$, from which we obtain $a^{2n}=1$ after substituting $x$.
		
		We have inferred that the $2$-regular graph $\Gamma[V_t]$ is connected. 
		Notice that $P,xy^{-1}P$ are internally disjoint paths with the same ends living inside $V_t$, therefore their concatenation induces a cycle in $V_t$. Recall that $a$ has order $2n$. This directly implies the Lemma.  
	\end{proof}
	
	Observe that the torso of a part $V_s\in O_2$ induces a connected, $2$-regular graph. It cannot be a double ray: in that case every vertex is a cut vertex (as is easily seen), which violates the $2$-connectivity of $\Gamma$. Hence, the torso of $V_s$ induces a finite cycle, whose edges we can label by Lemma~\ref{a-bag} with $a^n$ (corresponding to the virtual edges of the torso) and $b$ in an alternating fashion. Therefore, there is an $m \geq 2$ such that $(ba^n)^m=1$. 
	
	
	It remains to compute the vertex stabilizers of $T$.
	
	Let $V_{t_1}\in O_1$ such that $1 \in V_{t_1}$. By Lemma~\ref{a-bag}, we clearly have 
	$\langle a \rangle=V_{t_1}$ and therefore $\st_G(V_{t_1})=\langle a \rangle\cong \mathbb Z_{2n}$.
	Next, let $V_{t_2} \in O_2$ such that $1 \in V_{t_2}$. Recall that $(ba^n)^m=1$ and notice that $(b(ba^n))^2 =a^{2n}=1$. By the structure of the torso of $V_{t_2}$, we observe that the elements of $V_{t_2}$ form a group generated by $b$ and $ba^n$ with presentation $\langle ba^n,b \mid ((ba)^n)^m,b^2, (b(ba^n))^2\rangle$. Since $V_{t_2}$ forms a subgroup of $G$, we deduce that 
	\[
	\st_G(V_{t_2})=V_{t_2}=\langle ba^n,b \mid ((ba)^n)^m,b^2, (b(ba^n))^2\rangle\cong D_{2m}.
	\]
	\noindent 
	Finally, by Lemma~\ref{bass-serre} we obtain $G\cong\mathbb Z_{2n} \underset{\mathbb Z_2}{\ast} D_{2m}$.
	
	We observe that the torso of $V_{t_1}$ is isomorphic to $V_{2n}$. Since $V_{2n}$ is planar if and only if $n=2$, we conclude by Lemma~\ref{planarity} that $\Gamma$ is planar if and only if $n=2$. We have obtained the following theorem, along with its corollary by the definition of a free product with amalgamation:
	
	\begin{thm}\label{Type II two gen}
		If $(T,\mathcal V)$ is of {\sc{Type II}}  with two generators, then
		\[
		G\cong\mathbb Z_{2n} \underset{\mathbb Z_2}{\ast} D_{2m}.
		\]
		In particular, $G$ is planar if and only if $n=2$.\qedhere
	\end{thm}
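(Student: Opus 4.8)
The plan is to assemble the structural results established above into a single application of Bass--Serre theory. First, by Lemma~\ref{CaseIII} every adhesion set realizes Case III, so each separator vertex is incident to a single $b$-edge on one side and to two $a$-edges on the other. I would then argue that no part can carry edges of both colours: if a part $V_t$ did, then by Corollary~\ref{Neighbourhood Lemma} the $a$-edges and the $b$-edges would induce different connected components in its torso, contradicting the connectivity of $\Gamma$. Hence $(T,\mathcal V)$ has exactly two orbits $O_1,O_2$ of parts, the $a$-parts and the $b$-parts, and since the edge colour distinguishes the two orbits the action of $G$ is without inversion.

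Next I would pin down the two part-types. For $O_1$, Lemma~\ref{a-bag} gives that each such part is an $a$-cycle of length $2n$ and that $a$ has order $2n$; hence the part containing $1$ is exactly $\langle a\rangle$ and $\st_G(V_{t_1})=\langle a\rangle\cong\mathbb Z_{2n}$. For $O_2$, each part is internally a perfect $b$-matching, and its torso is a finite cycle whose edges alternate between the virtual $a^n$-edges and the genuine $b$-edges; reading off this cycle yields a relation $(ba^n)^m=1$ with $m\geq 2$. For the $b$-part $V_{t_2}$ containing $1$, I would verify that its vertex set is closed under multiplication, so that it is a subgroup equal to its own stabilizer, and then identify it with $\langle ba^n,b\rangle\cong D_{2m}$ using that $b$ is an involution conjugating $ba^n$ to its inverse, together with $(b\cdot ba^n)^2=a^{2n}=1$.

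With both vertex stabilizers computed and the edge stabilizer equal to $\langle b\rangle\cong\mathbb Z_2$ by Corollary~\ref{Edge stabiliser}, the action of $G$ on $T$ is transitive on edges, without inversion, and has two vertex orbits, so Lemma~\ref{bass-serre} delivers $G\cong\mathbb Z_{2n}\underset{\mathbb Z_2}{\ast}D_{2m}$ directly. For the planarity statement I would invoke Lemma~\ref{planarity}: the torsos of the $O_2$-parts are cycles and hence always planar, while the torso of an $O_1$-part is precisely $V_{2n}$. Since $V_{2n}$ is planar if and only if $n=2$ (indeed $V_6\cong K_{3,3}$ is nonplanar, whereas $V_4\cong K_4$ is planar), this forces $\Gamma$ to be planar exactly when $n=2$.

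I expect the main obstacle to be the bookkeeping for the $O_2$-part: one must check carefully that its vertex set is genuinely a subgroup and that the presentation $\langle ba^n,b\rangle$ defines $D_{2m}$ rather than a proper quotient, which requires knowing both that $ba^n$ has order exactly $m$ (from the torso cycle) and that conjugation by $b$ inverts it. Everything else is a direct reading-off of the orbit structure together with a routine application of the quoted lemmas.
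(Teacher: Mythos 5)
Your proposal reproduces the paper's proof essentially step for step: Lemma~\ref{CaseIII}, the two orbits $O_1,O_2$ with action without inversion, Lemma~\ref{a-bag} for the $a$-parts, the alternating torso cycle of a $b$-part giving $(ba^n)^m=1$, the stabilizers $\mathbb Z_{2n}$ and $D_{2m}$ (including the check that $V_{t_2}$ is a subgroup of order $2m$, so $\langle ba^n,b\rangle$ is $D_{2m}$ and not a proper quotient), Lemma~\ref{bass-serre}, and planarity via the torso $V_{2n}$ and Lemma~\ref{planarity}. The one slip is your identification of the edge stabilizer as $\langle b\rangle$: in \textsc{Type II} the adhesion sets are independent sets of the form $\{y,ya^{\pm n}\}$, so the setwise stabilizer of the adhesion set $\{1,a^n\}$ is $\langle a^n\rangle$ by Lemma~\ref{Involution lemma}, not $\langle b\rangle$ --- indeed $b\notin\st_G(V_{t_1})=\langle a\rangle$, so $b$ cannot stabilize a tree edge incident to $t_1$ (the edge stabilizer is $\langle b\rangle$ only in the \textsc{Type I} cases, where adhesion sets induce $b$-edges). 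This does not affect the conclusion, since $\langle a^n\rangle\cong\mathbb Z_2$ and $a^n=b\cdot ba^n$ lies in both vertex stabilizers, so the amalgamation is over $\mathbb Z_2$ exactly as Corollary~\ref{Edge stabiliser} guarantees.
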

	
	\begin{coro}{\rm\cite[Theorem 1.1]{Ageloscubic}}
		If $(T,\mathcal V)$ is of {\sc{Type II}} with two generators, then \[
		G=\langle a,b \mid b^2, a^{2n}, (ba^n)^m \rangle.
		\]
		In particular, $G$ is planar if and only if $n=2$.\qedhere
	\end{coro}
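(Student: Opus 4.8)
The plan is to read off the two orbits of parts of $(T,\mathcal V)$ under the $G$-action, compute their setwise stabilizers together with the common edge stabilizer, and then feed these into Bass--Serre theory; the planarity statement will follow separately from the local structure of the torsos.

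First I would pin down the local picture at every adhesion set. By Lemma~\ref{CaseIII} only Case III of Figure~\ref{typeII2cases} occurs, so around each separator $\{x,y\}$ the two $b$-edges point into one part while the four $a$-edges point into the other. Since $b$ is an involution, no single part can carry edges of both colours: otherwise the $a$-edges and the $b$-edges would induce different connected components in its torso (using Corollary~\ref{Neighbourhood Lemma}), contradicting connectivity of $\Gamma$. Hence the parts split into two orbits $O_1$ (only $a$-edges) and $O_2$ (only $b$-edges), and $G$ acts without inversion.

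Next I would determine the structure of each orbit. For $O_1$, Lemma~\ref{a-bag} supplies an $n\geq 2$ with $a^{2n}=1$ making every $O_1$-part an $a$-cycle of length $2n$; taking the part $V_{t_1}$ through $1$ gives $V_{t_1}=\langle a\rangle$, hence $\st_G(V_{t_1})=\langle a\rangle\cong\mathbb{Z}_{2n}$. For $O_2$, each part is a perfect $b$-matching, so its torso is a connected $2$-regular graph whose edges alternate between $b$ and the virtual $a^n$-edge furnished by Lemma~\ref{a-bag}; ruling out the double-ray case by $2$-connectivity of $\Gamma$ forces the torso to be a finite cycle, yielding an $m\geq 2$ with $(ba^n)^m=1$. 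For the part $V_{t_2}$ through $1$ I would verify the relations $b^2=1$, $(b(ba^n))^2=a^{2n}=1$ and $(ba^n)^m=1$, together with closure of $V_{t_2}$ under multiplication, to conclude $\st_G(V_{t_2})=V_{t_2}=\langle ba^n,b\rangle\cong D_{2m}$. The edge stabilizer is $\cong\mathbb{Z}_2$ by Corollary~\ref{Edge stabiliser} (equivalently Lemma~\ref{Involution lemma}) and embeds in both vertex stabilizers, so with two vertex orbits and a transitive edge action Lemma~\ref{bass-serre} yields $G\cong\mathbb{Z}_{2n}\underset{\mathbb{Z}_2}{\ast}D_{2m}$.

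Finally, for planarity I would apply Lemma~\ref{planarity}: the tree-decomposition is tight with adhesion at most $2$, so $\Gamma$ is planar exactly when every torso is. The $O_2$-torsos are cycles, hence planar, while the $O_1$-torso is $V_{2n}$, the $2n$-cycle together with the antipodal diagonals arising from the virtual edges, which is planar if and only if $n=2$ (for instance $V_6=K_{3,3}$ is not planar). The main obstacle is the careful bookkeeping that forces Case III and that correctly identifies the $O_1$-torso as $V_{2n}$ rather than another cubic graph: one must check that each adhesion set pairs a vertex with its antipode on the $a$-cycle (so the virtual edges are exactly the $n$ long diagonals) and that $a$ has order \emph{exactly} $2n$, so the cycle does not collapse onto a shorter one. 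Once this local geometry is fixed, the stabilizer computations and the appeals to Lemma~\ref{bass-serre} and Lemma~\ref{planarity} are routine.
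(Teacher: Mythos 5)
Your proposal is correct and follows essentially the same route as the paper: Lemma~\ref{CaseIII} forcing Case III, the two orbits $O_1,O_2$ with Lemma~\ref{a-bag} identifying the $a$-cycles and adhesion sets as antipodal pairs, the stabilizer computations $\mathbb{Z}_{2n}$ and $D_{2m}$ fed into Lemma~\ref{bass-serre}, and planarity via Lemma~\ref{planarity} with the $O_1$-torso isomorphic to $V_{2n}$ (with $V_6=K_{3,3}$ exactly as in the paper's reasoning). The paper obtains the stated presentation from $G\cong\mathbb{Z}_{2n}\underset{\mathbb{Z}_2}{\ast}D_{2m}$ purely by the definition of a free product with amalgamation, which your argument also delivers.
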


	\subsection{Three generators}
	Let $G=\langle {\color{red}a},{\color{blue}b},{\textcolor{green}{c}}\rangle$, where $a,b$ and $c$ are involutions.
	Then --up to rearranging $a,b,c$-- we have the following cases for the separations in $\mathcal N$:
	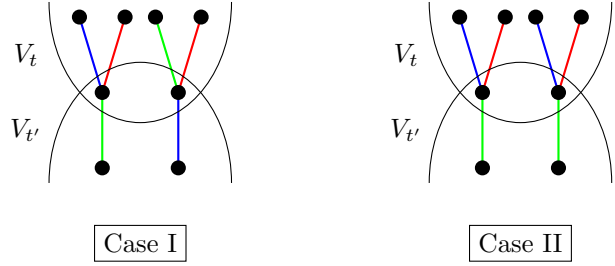
\begin{figure}[ht]
		\centering
		\begin{tikzpicture}

			\draw (-1.5,0.5) node {$V_t$};
			\draw (-1.5,-0.5) node {$V_{t'}$};
			
			\draw (3.5,0.5) node {$V_t$};
			\draw (3.5,-0.5) node {$V_{t'}$};
			
			\draw[red,thick] (5.5,-1)--(5.5,0);
			\draw [red,thick](4.5,-1)--(4.5,0);
			
			\draw[blue,thick](4.8,1)--(4.5,0);
			\draw[green,thick](4.2,1)--(4.5,0);
			\draw[green,thick](5.2,1)--(5.5,0);
			\draw[blue, thick](5.8,1)--(5.5,0);
			
			\draw (6.2,-1.2) arc(0:180:1.2cm and 1.6cm);
			\draw (6.2,1.2) arc(0:-180:1.2cm and 1.6cm);
			
			\draw (5.5,0) node [circle,fill, inner sep=2pt] {};
			\draw (4.5,0) node [circle,fill, inner sep=2pt] {};
			\draw (5.2,1) node [circle,fill, inner sep=2pt] {};
			\draw (5.8,1) node [circle,fill, inner sep=2pt] {};
			\draw (4.2,1) node [circle,fill, inner sep=2pt] {};
			\draw (4.8,1) node [circle,fill, inner sep=2pt] {};
			\draw (5.5,-1) node [circle,fill, inner sep=2pt] {};
			\draw (4.5,-1) node [circle,fill, inner sep=2pt] {};

			\draw[green,thick] (-0.5,-1)--(-0.5,0);
			\draw[blue,thick] (0.5,-1)--(0.5,0);
			\draw[red,thick](-0.2,1)--(-0.5,0);
			\draw[blue,thick](-0.8,1)--(-0.5,0);
			\draw[green,thick](0.2,1)--(0.5,0);
			\draw[red,thick](0.8,1)--(0.5,0);
			
			\draw (0.5,0) node [circle,fill, inner sep=2pt] {};
			\draw (-0.5,0) node [circle,fill, inner sep=2pt] {};
			\draw (0.2,1) node [circle,fill, inner sep=2pt] {};
			\draw (0.8,1) node [circle,fill, inner sep=2pt] {};
			\draw (-0.8,1) node [circle,fill, inner sep=2pt] {};
			\draw (-0.2,1) node [circle,fill, inner sep=2pt] {};
			\draw (0.5,-1) node [circle,fill, inner sep=2pt] {};
			\draw (-0.5,-1) node [circle,fill, inner sep=2pt] {};
			
			\node[draw] at (0,-2) {Case I};
			\node[draw] at (5,-2) {Case II};
			
			\draw (1.2,-1.2) arc(0:180:1.2cm and 1.6cm);
			\draw (1.2,1.2) arc(0:-180:1.2cm and 1.6cm);

			\begin{scope}[shift={(0.5,0.5)}]
			\end{scope}

		\end{tikzpicture}
		\caption{{\sc{Type II}}  cases with three generators} \label{typeII3cases}
	\end{figure}
	
	As in Subsection~\ref{two generators}, by properly applying Lemma~\ref{Two-Path Lemma} we obtain the analogue of Lemma~\ref{CaseIII} for three generators with exactly the same proof.
	
	\begin{lemma}\label{CaseII}
		The adhesion sets of $(T,\mathcal V)$ satisfy Case II. \qed
	\end{lemma}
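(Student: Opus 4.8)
The plan is to reproduce the proof of Lemma~\ref{CaseIII} almost verbatim, now using that all three generators $a,b,c$ are involutions, in order to rule out Case~I of Figure~\ref{typeII3cases} by means of Lemma~\ref{Two-Path Lemma}. First I would record the elementary fact that, since each generator squares to the identity, no path of $\Gamma$ traverses two consecutive edges of the same color (such a step would return to the previous vertex); consequently every subpath of length two carries exactly one of the unordered color pairs $\{a,b\}$, $\{a,c\}$, $\{b,c\}$. Then I fix an adhesion set $\{x,y\}$ and a shortest $(x,y)$-path $P$, which has length at least two because $x$ and $y$ are non-adjacent in a {\sc{Type II}} separator.

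The core of the argument is a short case-check on the local picture of Case~I, read off directly from Figure~\ref{typeII3cases}. There the single edge that $x$ sends into one side $W_{t'\setminus t}$ of the separation is colored $c$, while its two edges into the other side $W_{t\setminus t'}$ are colored $a$ and $b$; the single edge $y$ sends into $W_{t'\setminus t}$ is colored $b$, while its two edges into $W_{t\setminus t'}$ are colored $a$ and $c$. Since $\{x,y\}$ separates $W_{t\setminus t'}\setminus\{x,y\}$ from $W_{t'\setminus t}\setminus\{x,y\}$, any neighbor of $x$ or $y$ on one side lies in a different component of $\Gamma\setminus\{x,y\}$ than any neighbor on the other side. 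I would then check that in Case~I every admissible coloring is realized by a length-two path through $\{x,y\}$ whose endpoints straddle the separation: the pair $\{a,c\}$ is realized at $x$ by the edges to $xa$ and $xc$, the pair $\{b,c\}$ at $x$ by the edges to $xb$ and $xc$, and the pair $\{a,b\}$ at $y$ by the edges to $ya$ and $yb$.

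With this, the contradiction is forced exactly as in Lemma~\ref{CaseIII}. Given any length-two subpath $P'=v_1v_2v_3$ of $P$ with coloring $\{s,s'\}$, say $v_1=v_2s$ and $v_3=v_2s'$, I pick the separating path $Q$ above realizing the same coloring, with middle vertex $w\in\{x,y\}$, and set $g=wv_2^{-1}$. Because left translation preserves edge colors, $gv_2=w\in\{x,y\}$, $gv_1=ws$ and $gv_3=ws'$, so $gP'=Q$ and hence $gv_1,gv_3$ lie in different components of $\Gamma\setminus\{x,y\}$, contradicting Lemma~\ref{Two-Path Lemma}. Therefore Case~I is impossible and only Case~II survives.

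The only real obstacle is the bookkeeping of the previous paragraph, and in particular seeing why the same reasoning does not also eliminate Case~II: there both $x$ and $y$ send their $a$- and $b$-edges into the \emph{same} side $W_{t\setminus t'}$, so the coloring $\{a,b\}$ can never be made to straddle the separation and no contradiction arises. This asymmetry between the two figures is precisely what singles out Case~II as the admissible configuration.
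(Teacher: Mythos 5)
Your proposal is correct and is exactly the paper's intended argument: the paper proves this lemma simply by observing that the proof of Lemma~\ref{CaseIII} carries over ``with exactly the same proof'' via Lemma~\ref{Two-Path Lemma}, which is precisely the translation-of-a-length-two-subpath argument you spell out, with the explicit check that in Case~I all three color pairs $\{a,b\},\{a,c\},\{b,c\}$ are realized by straddling paths through $\{x,y\}$. Your closing observation about why the argument does not also kill Case~II (the pair $\{a,b\}$ can never straddle the separator there) is a correct and welcome sanity check, though not required for the lemma.
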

	
	Since the torso of every part of $(T,\mathcal V)$ is a connected graph, we deduce that the tree-decomposition has two orbits of parts: parts in $O_1$ contain only $b$- and $c$-edges and parts in $O_2$ induce perfect $a$-matchings. Clearly, $G$ then acts on $(T, \mathcal V)$ without inversion. Let us quickly obtain the analogue of Lemma~\ref{a-bag}.
	
	\begin{lemma}\label{bc-bag}
		Every part in $O_1$ induces an alternating $(b,c)$-cycle of even length or an alternating  double $(b,c)$-ray.
	\end{lemma}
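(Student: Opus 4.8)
The plan is to mimic the proof of Lemma~\ref{a-bag}, transferring the argument from the two-generator setting to the parts in $O_1$, which now carry only $b$- and $c$-edges. By Lemma~\ref{CaseII} the adhesion sets realize Case II, so every part in $O_1$ contains exclusively $b$- and $c$-edges while its neighbouring parts in $O_2$ induce perfect $a$-matchings. First I would fix $V_t\in O_1$ and an adhesion set $\{x,y\}=V_t\cap V_{t'}$. Since each $s\in N_T(t)$ gives $V_s\in O_2$ inducing an $a$-matching (hence a connected subgraph on each adhesion set), Lemma~\ref{Connectivity Lemma}(ii) ensures that $\Gamma[V_t]$ is connected. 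By Corollary~\ref{Neighbourhood Lemma}, every vertex of $V_t$ has degree exactly $2$ inside $V_t$, so $\Gamma[V_t]$ is a connected $2$-regular graph, i.e.\ a finite cycle or a double ray.

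Next I would argue that the cycle (or double ray) must alternate between $b$- and $c$-edges. This is immediate from the fact that $b$ and $c$ are involutions: no path can contain two consecutive $b$-edges or two consecutive $c$-edges, since traversing the same involution twice returns to the previous vertex. Hence consecutive edges along the $2$-regular graph $\Gamma[V_t]$ must alternate in colour, giving an alternating $(b,c)$-structure. The double-ray case is excluded exactly as in the torso argument preceding Theorem~\ref{Type II two gen}: in a double ray every vertex is a cut vertex, contradicting the $2$-connectivity of $\Gamma$. Therefore $\Gamma[V_t]$ is a finite alternating $(b,c)$-cycle. (If instead one wishes to retain the double-ray possibility explicitly in the statement, the argument splits at precisely this point; I would phrase the exclusion to match the stated alternative.)

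Finally I would pin down the length of the alternating cycle to be a multiple of $4$. In an alternating $(b,c)$-cycle the edges come in the repeating pattern $b,c,b,c,\dots$, so the cycle already has even length, with equally many $b$- and $c$-edges. To upgrade to a multiple of $4$, I would use the involution structure imposed by Lemma~\ref{Involution lemma}: for the adhesion set $\{x,y\}$ we have $(xy^{-1})^2=1$, and the element $xy^{-1}$ acts on $V_t$ as an involution of the cycle. Writing $x$ and $y$ in terms of the alternating word in $b,c$ that traverses $\Gamma[V_t]$, substituting into $(xy^{-1})^2=1$ forces the length of the word to be divisible by $4$ rather than merely even, exactly as the relation $a^{2n}=1$ was derived in Lemma~\ref{a-bag}.

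The main obstacle I expect is the last step: cleanly extracting the divisibility-by-$4$ condition (as opposed to the weaker divisibility-by-$2$ that alternation alone gives) from the involution relation, while simultaneously ruling out degenerate short cycles. I would handle this by carefully tracking how the involution $xy^{-1}$ interchanges the two adhesion-set vertices and reverses the alternating cycle, so that the alternating pattern is compatible with the involution only when the cycle length is a multiple of $4$; this compatibility of the colour-alternation with the orientation-reversing involution is precisely what distinguishes this three-generator case from the simpler single-colour cycle of Lemma~\ref{a-bag}.
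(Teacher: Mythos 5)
You have a genuine gap, and it is fatal to the proposal as written: you \emph{exclude} the double-ray case, but that case must be kept --- it is part of the statement and it genuinely occurs, namely in case (ii) of Theorem~\ref{Type II three gen}, where $G\cong D_{\infty} \underset{\mathbb Z_2}{\ast} D_{2m}$, the element $bc$ has infinite order, and every part in $O_1$ is an alternating double $(b,c)$-ray. Your cut-vertex argument does not transfer from the place you borrowed it. For a part $V_s\in O_2$ (and for the parts in {\sc{Type I}}), the \emph{torso} itself is $2$-regular, so a double-ray torso would indeed make every vertex a cut vertex of $\Gamma$. For a part $V_t\in O_1$, however, $\Gamma[V_t]$ being a double ray does not create cut vertices of $\Gamma$: by Corollary~\ref{Neighbourhood Lemma} the adhesion sets partition $V_t$, and each adhesion pair is joined by a path through the far side of its (tight) separation, so the torso of $V_t$ is the double ray \emph{plus infinitely many chords}, i.e.\ the $2$-connected graph $R_{2n+1}$ --- precisely the graph the paper introduces for this purpose. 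So the vertices of the double ray are not cut vertices, and your exclusion step collapses, taking the $D_\infty$ groups out of the classification with it.

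Your mechanism for the ``multiple of $4$'' also does not work, and the paper's proof shows where the real content lies: in a parity dichotomy for the word $y^{-1}x$. Since $\Gamma[V_t]$ is connected (this part of your argument matches the paper) and alternating, either $x=y(bc)^n$ or $x=y(bc)^nb$. In the even case, $(xy^{-1})^2=y(bc)^{2n}y^{-1}$, so Lemma~\ref{Involution lemma} gives $(bc)^{2n}=1$, and $xy^{-1}P$ is a \emph{second} $(x,y)$-path meeting $P$ only in $\{x,y\}$; their union exhausts the $2$-regular connected graph $\Gamma[V_t]$, which is therefore a cycle of length $4n$ --- that is where the multiple of $4$ comes from. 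In the odd case the involution relation is \emph{vacuous}: since $b(bc)^nb=(cb)^n=(bc)^{-n}$, the identity $(xy^{-1})^2=1$ holds automatically and imposes no divisibility condition at all; instead $xy^{-1}P=P$, and this is exactly the case of the double ray. So substituting into $(xy^{-1})^2=1$ cannot ``upgrade even to divisible by $4$'' as you hope; moreover your compatibility heuristic is false on its own terms, since a reflection of an even cycle through two opposite edge-midpoints preserves an alternating $2$-edge-colouring for \emph{every} even length, including lengths $\equiv 2 \pmod 4$. The missing idea, which replaces both of your problematic steps, is the even/odd split of $y^{-1}x$ in the spirit of Lemma~\ref{a-bag}: even word yields two internally disjoint paths of equal length $2n$ and hence a $4n$-cycle, odd word yields $xy^{-1}P=P$ and the double ray.
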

	
	\begin{proof}
		Let $V_t\in O_1$ and $\{x,y\}=V_t\cap V_{t'}$ be an adhesion set of $t$. Since all neighbours of $t$ induce an $a$-matching, it follows by Lemma~\ref{Connectivity Lemma}(ii) that $\Gamma[V_t]$ is connected. 
		
		Hence, there exists an $(x,y)$-path $P$ of length $i$ within $V_t$, necessarily alternating with $b$- and $c$-edges. Then, either $x=y(bc)^n$ or $x=y(bc)^nb$, up to swapping $b$ and $c$. To obtain the structure of the $2$-regular, connected graph $V_t$ we distinguish the following cases.
		
		\begin{itemize}
			\item[(i)] If $x=y(bc)^n$, then the $(x,y)$-path $xy^{-1}P$ intersects $P$ only in $x,y$ and by Lemma~\ref{Involution lemma}, we obtain $(bc)^{2n}=1$. In this case, $V_t$ induces an alternating $(b,c)$-cycle of length $4n$.
		\end{itemize}
		
		If $x=y(bc)^nb$, then $xy^{-1}P=P$ and:
		
		\begin{itemize}
			\item[(ii)] either $V_t$ induces an alternating $(b,c)$-cycle,
			\item[(iii)] or $V_t$ induces an alternating double $(b,c)$-ray.\qedhere
		\end{itemize}
	\end{proof}
	
	By the $2$-connectivity of $\Gamma$, the connected, $2$-regular torso of a part $V_s \in O_2$ must be a finite cycle. Depending on which of the cases of Lemma~\ref{bc-bag} we have, we can label its edges  with $(bc)^n$ or $(bc)^nb$ (corresponding to the virtual edges of the torso) and $a$ in an alternating fashion. Therefore, there is an $m \geq 2$ such that $(a(bc)^n)^m=1$ or $(a(bc)^nb)^m=1$. It remains to infer the structure of $G$ in each case.
	
	\begin{enumerate}[label=(\roman*)]
		\item Assume that every part in $O_1$ is an alternating $(b,c)$-cycle of length $4n$ and $(a(bc)^n)^m=1$.
		
		
		In order to compute the vertex stabilizers of $T$, let $V_{t_1}\in O_1$ with $1 \in V_{t_1}$. Since $(b(bc))^2=c^2=1$, we have that
		\[
		V_{t_1}=\langle bc \rangle \cup \langle bc \rangle b=\langle bc, b \mid (bc)^{2n}, b^2, (b(bc))^2 \rangle\cong D_{4n}.
		\] 
		Then $\st_G(V_{t_1})=V_{t_1}\cong D_{4n}$, as $V_{t_1}$ forms a group.
		Next, let $V_{t_2} \in O_2$ with $1 \in V_{t_2}$. Notice that  $(a(bc)^n)^m=a^2=1$ and $(a(a(bc)^n))^2=(bc)^{2n}=1$. We can deduce that $V_{t_2}$ is a group (and hence $\st_G(V_{t_2})=V_{t_2}$), along with its presentation: 
		\[
		\st_G(V_{t_2})=V_{t_2}=\langle a(bc)^n,a \mid (a(bc)^n)^m, a^2, (a(a(bc)^n))^2\rangle\cong D_{2m}.
		\]
		
		By Lemma~\ref{bass-serre}, we have
		\[
		G\cong D_{4n} \underset{\mathbb Z_2}{\ast} D_{2m}.
		\]
		
		In this case, the torso of $V_{t_1}$ is isomorphic to $V_{4n}$, which is planar if and only if $n=1$. 
		
		\item Suppose that parts in $O_1$ induce an alternating $(b,c)$-cycle of even length and that $(a(bc)^nb)^m=1$. \\
		Let $V_{t_1} \in O_1$ and  $V_{t_2} \in O_2$ both containing $1$ in the respective parts.
		We see that 
		\[
		\st_G(V_{t_1})=V_{t_1}=\langle bc, b \mid (bc)^k ,b^2, (b(bc))^2 \rangle\cong D_{2k},
		\]
		\[
		\st_G(V_{t_2})=V_{t_2}=\langle a(bc)^nb,a \mid (a(bc)^nb)^m, a^2, (a(a(bc)^nb))^2\rangle\cong D_{2m}.
		\]
		By Lemma~\ref{bass-serre},
		\[
		G\cong D_{2k} \underset{\mathbb Z_2}{\ast} D_{2m},
		\]
		
		Notice that the torso of $V_{t_1}$ is isomorphic to $W_{2n+1,2k}$, which is not planar.
		
		\item In this case, every part in $O_1$ is an alternating double $(b,c)$-ray and $(a(bc)^nb)^m=1$. 
		
		Let $V_{t_1} \in O_1$ and  $V_{t_2} \in O_2$ with $1$ contained in their common adhesion set. 
		Similarly, we have that 
		\[
		\st_G(V_{t_1})=V_{t_1}=\langle bc, b \mid b^2, (b(bc))^2 \rangle\cong D_{\infty},
		\]
		\[
		\st_G(V_{t_2})=V_{t_2}=\langle a(bc)^nb,a \mid (a(bc)^nb)^m, a^2, (a(a(bc)^nb))^2\rangle\cong D_{2m}.
		\]
		By Lemma~\ref{bass-serre}, we have
		\[
		G\cong D_{\infty} \underset{\mathbb Z_2}{\ast} D_{2m},
		\]
		
		to conclude that the torso of $V_{t_1}$ is isomorphic to $R_{2n+1}$, which is planar if and only if $n=1$. 
	\end{enumerate}
	\noindent
	
	By Lemma~\ref{planarity} and the above discussion, we have deduced:

	\begin{thm}\label{Type II three gen}
		If $(T,\mathcal V)$ is of {\sc{Type II}}  with three generators, then $G$ satisfies one of the following cases:
		\begin{enumerate}[label=\rm(\roman*)]
			\item $G \cong D_{4n} \underset{\mathbb Z_2}{\ast} D_{2m}$.
			\item $G\cong D_{2k} \underset{\mathbb Z_2}{\ast} D_{2m}$
			\item $G\cong D_{\infty} \underset{\mathbb Z_2}{\ast} D_{2m}$.\qed
		\end{enumerate}
	\end{thm}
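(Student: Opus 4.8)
The plan is to mirror the two-generator argument of Subsection~\ref{two generators}, adapting each step to the presence of three involutory generators $a,b,c$. I would begin by invoking Lemma~\ref{CaseII} to assume that every adhesion set realizes Case~II of Figure~\ref{typeII3cases}. Since the torso of every part is connected (Lemma~\ref{Torso Lemma} together with Lemma~\ref{Connectivity Lemma}), no part can carry both the $a$-edges and the $(b,c)$-edges without forcing a disconnection of $\Gamma$; this yields precisely two orbits of parts, $O_1$ (carrying $b$- and $c$-edges) and $O_2$ (inducing perfect $a$-matchings). As no group element can exchange the two incident part-types across an adhesion set, $G$ acts on $(T,\mathcal V)$ without inversion, and it is exactly this split into two orbits on $V(T)$ that will let me invoke Lemma~\ref{bass-serre} at the end rather than its subdivided version.

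Next I would pin down the part structure. For $O_1$, Lemma~\ref{bc-bag} supplies the central dichotomy of the theorem: each such part is either an alternating $(b,c)$-cycle of length $4n$ or an alternating double $(b,c)$-ray. For $O_2$, I would argue as in the two-generator case that the $2$-connectivity of $\Gamma$ forbids the connected $2$-regular torso of an $O_2$ part from being a double ray (every vertex would be a cut vertex), so this torso is a finite cycle; labelling its virtual edges by $(bc)^n$ or $(bc)^nb$ according to the $O_1$ case, and its genuine edges by $a$, produces an $m\geq 2$ with $(a(bc)^n)^m=1$ or $(a(bc)^nb)^m=1$.

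The substance of the proof is then the vertex-stabilizer computation, carried out separately in the two subcases. Choosing in each orbit the part containing the identity, I first note that since $1$ lies in the part, any element stabilizing it setwise lies in the part as well (because $G$ acts by left translation), so $\st_G(V_t)\subseteq V_t$; the reverse inclusion amounts to checking that the vertex set of the part is closed under multiplication, i.e.\ forms a subgroup. Reading the presentation off the cycle/ray structure, the $O_1$ stabilizer is $\langle bc,b\rangle\cong D_{4n}$ in the cyclic case and $\langle bc,b\rangle\cong D_{\infty}$ in the double-ray case, while in both cases the $O_2$ stabilizer is a finite dihedral group, $\langle a(bc)^n,a\rangle\cong D_{2m}$ or $\langle a(bc)^nb,a\rangle\cong D_{2m}$; the defining relations come from $a^2=b^2=c^2=1$ together with the orders of $bc$ and of the generator $a(bc)^n$ (respectively $a(bc)^nb$). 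I expect the most delicate point to be confirming that these presentations realize subgroups of exactly the right order, matching the number of vertices in the part, so that each part is precisely the subgroup and not a proper image of it; this is where one must verify that $bc$ has order exactly $2n$ (from the cycle length $4n$) and that the relevant $O_2$ generator has order exactly $m$, using Lemma~\ref{Involution lemma} and the explicit coset decomposition of the part guaranteed by Corollary~\ref{Neighbourhood Lemma}.

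Finally, with both vertex stabilizers determined, $G$ acting on $T$ without inversion, transitively on $E(T)$ and with two orbits on $V(T)$, and with edge-stabilizers $\cong\mathbb Z_2$ by Corollary~\ref{Edge stabiliser}, Lemma~\ref{bass-serre} presents $G$ as the amalgamated free product of the two vertex stabilizers over their common $\mathbb Z_2$. This gives $G\cong D_{4n}\underset{\mathbb Z_2}{\ast}D_{2m}$ in the cyclic case and $G\cong D_{\infty}\underset{\mathbb Z_2}{\ast}D_{2m}$ in the double-ray case, which are exactly cases (i) and (ii). The planarity statements then follow from Lemma~\ref{planarity} by recognizing the torso of the $O_1$ part as $V_{4n}$ or $R_{2n+1}$.
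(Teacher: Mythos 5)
Your proposal is correct and follows essentially the same route as the paper: Lemma~\ref{CaseII}, the two-orbit structure with action without inversion, the dichotomy of Lemma~\ref{bc-bag}, the finite-cycle torso for $O_2$ parts giving $(a(bc)^n)^m=1$ or $(a(bc)^nb)^m=1$, the stabilizer computations via $1\in V_t$ and closure of the part under multiplication, and finally Lemma~\ref{bass-serre} together with Corollary~\ref{Edge stabiliser} and the planarity analysis via Lemma~\ref{planarity}. The only cosmetic difference is that you flag the exact orders of $bc$ and the $O_2$ generator as a delicate point, whereas the paper extracts these directly from the cycle lengths and Lemma~\ref{Involution lemma} inside Lemma~\ref{bc-bag}.
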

	
	\begin{coro}{\rm\cite[Theorem 1.1]{Ageloscubic}}
		If $(T,\mathcal V)$ is of {\sc{Type II}} with three generators, then $G$ has one of the following presentations:
		\begin{enumerate}[ label=\rm(\roman*)]
			\item $G=\langle a,b,c \mid a^2, b^2, c^2, (bc)^{2n}, (a(bc)^n)^m \rangle$ and $\Gamma$ is planar if and only if $n=1$.
			\item $G=\langle a,b,c \mid a^2, b^2, c^2, (bc)^k,(a(bc)^nb)^m \rangle$. $\Gamma$ is not planar.
			\item $G=\langle a,b,c \mid a^2, b^2, c^2, (a(bc)^nb)^m \rangle$ and $\Gamma$ is planar if and only if $n=1$.\qed
		\end{enumerate}
	\end{coro}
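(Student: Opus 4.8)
The plan is to carry out the same Bass--Serre analysis used in the previous subsections: determine the orbits of parts of $(T,\mathcal V)$, describe the torsos in each orbit, compute the setwise stabilisers of two adjacent parts, and then invoke Lemma~\ref{bass-serre}. The theorem will then split into its two cases according to a structural dichotomy for the parts in one orbit, and the remaining work reduces to identifying the isomorphism types of the vertex stabilisers.

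First I would fix the local picture at an adhesion set. Applying Lemma~\ref{Two-Path Lemma} exactly as in the proof of Lemma~\ref{CaseIII}---every path of length two uses at least one $a$-edge and can be translated so that its middle vertex lands in $\{x,y\}$---rules out Case I of Figure~\ref{typeII3cases} and leaves only Case II (this is Lemma~\ref{CaseII}). Since the torso of every part is connected (Lemma~\ref{Torso Lemma} together with Lemma~\ref{Connectivity Lemma}), a single part cannot carry both an $a$-edge and a $(b,c)$-edge; hence $(T,\mathcal V)$ has two orbits $O_1,O_2$, where the parts of $O_1$ carry only $b$- and $c$-edges and the parts of $O_2$ induce perfect $a$-matchings, and $G$ acts without inversion.

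Next I would pin down the parts of $O_1$ as in Lemma~\ref{bc-bag}. As each neighbour of a part $V_t\in O_1$ is an $a$-matching, Lemma~\ref{Connectivity Lemma}(ii) makes $\Gamma[V_t]$ connected and $2$-regular, so a shortest $(x,y)$-path inside $V_t$ alternates in $b,c$ and gives $x=y(bc)^n$ or $x=y(bc)^nb$. Feeding the adhesion set $\{x,y\}$ into Lemma~\ref{Involution lemma} then forces $\Gamma[V_t]$ to be either an alternating $(b,c)$-cycle of length $4n$ (whence $(bc)^{2n}=1$) or an alternating double $(b,c)$-ray; this dichotomy is precisely what produces cases~(i) and~(ii). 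By the $2$-connectivity of $\Gamma$, the connected $2$-regular torso of a part in $O_2$ must be a finite cycle alternating between $a$-edges and virtual edges labelled $(bc)^n$ or $(bc)^nb$, yielding a relation $(a(bc)^n)^m=1$ or $(a(bc)^nb)^m=1$ with $m\geq 2$.

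Finally I would compute the stabilisers by placing $1$ in each of two adjacent parts $V_{t_1}\in O_1$ and $V_{t_2}\in O_2$. The relations above show that the vertices of each part already form a subgroup, so $\st_G(V_{t_i})=V_{t_i}$; reading off generators and relators identifies $V_{t_1}\cong D_{4n}$ (respectively $D_\infty$) and $V_{t_2}\cong D_{2m}$, while the edge stabiliser is $\cong\mathbb Z_2$ by Corollary~\ref{Edge stabiliser}. Lemma~\ref{bass-serre} then yields $G\cong D_{4n}\underset{\mathbb Z_2}{\ast}D_{2m}$ or $G\cong D_\infty\underset{\mathbb Z_2}{\ast}D_{2m}$. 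I expect the delicate point to be the bookkeeping that establishes $\st_G(V_{t_i})=V_{t_i}$: one has to verify that appending any further generator to a coset representative leaves the part, so that no extra element can stabilise it, and separately that the two alternatives for $\Gamma[V_t]$ in Lemma~\ref{bc-bag} are genuinely exhaustive.
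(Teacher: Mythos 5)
Your proposal reproduces the paper's own argument essentially step for step on the algebraic side: ruling out Case~I via Lemma~\ref{Two-Path Lemma} (this is Lemma~\ref{CaseII}), splitting $\mathcal V$ into the orbits $O_1,O_2$ with no inversion, the dichotomy of Lemma~\ref{bc-bag} obtained from Lemma~\ref{Involution lemma} (which is exactly what separates items (i) and (ii)), the finite alternating cycle structure of the $O_2$-torsos giving $(a(bc)^n)^m=1$ or $(a(bc)^nb)^m=1$, and the stabiliser computations $\st_G(V_{t_1})=V_{t_1}\cong D_{4n}$ (resp.\ $D_\infty$) and $\st_G(V_{t_2})=V_{t_2}\cong D_{2m}$ feeding into Lemma~\ref{bass-serre}. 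Reading off the presentation of the amalgam is then routine, since the auxiliary relators are absorbed: $(b(bc))^2=c^2$ and $\bigl(a\cdot a(bc)^n\bigr)^2=(bc)^{2n}$, so one indeed lands on $\langle a,b,c\mid a^2,b^2,c^2,(bc)^{2n},(a(bc)^n)^m\rangle$, and similarly in case (ii).

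There is, however, a genuine omission: the corollary also asserts that $\Gamma$ is planar if and only if $n=1$ in each case, and your proposal never addresses planarity at all. The paper closes this via Lemma~\ref{planarity}: the torsos of parts in $O_2$ are finite cycles, hence planar, while the torso of a part $V_{t_1}\in O_1$ is \emph{not} just the induced $(b,c)$-cycle or double ray you describe (which would always be planar), but that graph together with the virtual edges coming from the adhesion sets $\{y,\,y(bc)^n\}$ (resp.\ $\{y,\,y(bc)^nb\}$). These virtual edges are precisely the ``diagonals'' turning the torso into the graph $V_{4n}$ in case (i) and into $R_{2n+1}$ in case (ii); since $V_{4n}$ is planar if and only if $n=1$ (it is $K_4$ for $n=1$ and contains a $K_{3,3}$-subdivision for $n\geq 2$) and $R_{2n+1}$ is planar if and only if $n=1$, Lemma~\ref{planarity} yields the stated biconditional for $\Gamma$, and hence for $G$. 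To repair your write-up you would need to identify the torsos of the $O_1$-parts explicitly in this way and then invoke Lemma~\ref{planarity} (whose forward direction is exactly that each torso is a topological minor of $\Gamma$); without this, half of the statement remains unproved.
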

	
	\section{Open Questions}
	
	Having obtained the full characterization of groups admitting cubic Cayley graphs of connectivity two, some further open questions can naturally be raised. In light of Lemma~\ref{one-ended},  we can ask the following.
	\begin{prob}
		Characterize all groups admitting $4$-regular Cayley graphs of connectivity at most three in terms of splitting over subgroups. 
	\end{prob}
	
	Let $\mathcal G$ be the family of graphs containing all cycles and all graphs of the form $W_{2n+1,k}$, $V_{2n}$ or $R_{2m+1}$.
	A graph is called quasi-transitive if it has a finite number of orbits under the action of its automorphism group.
	Looking back at Theorem \ref{group characterization}, we see that cubic Cayley graphs of connectivity two can be expressed 
	as a tree decomposition whose torsos induce graphs from $\mathcal G$.
	The main tools from our proof seem to go through to support that this is in general the case for every cubic transitive graph of connectivity two.
	We can go a step further and ask the following question:
	
	\begin{prob}
		Characterize all cubic quasi-transitive graphs of connectivity two in terms of ``canonical'' tree decompositions with the property that the automorphism group of the graph acts transitively on the set of the adhesion sets.
	\end{prob}
	
	\bibliographystyle{plain}
	\bibliography{collective.bib}  
\end{document}